\documentclass[12pt]{amsart}
\usepackage[colorlinks=true,urlcolor=blue, citecolor=red,linkcolor=blue,linktocpage,pdfpagelabels, bookmarksnumbered,bookmarksopen]{hyperref}
\usepackage[hyperpageref]{backref}
\usepackage{amsthm} 
\usepackage{latexsym,amsmath,amssymb}
\usepackage{accents}
\usepackage[colorinlistoftodos,prependcaption,textsize=tiny]{todonotes}
\usepackage{a4wide}
\usepackage{soul}
\usepackage{mathtools} 
\usepackage{xparse} 
  
\title{On $C^{1,\alpha}$-regularity for critical points of a geometric obstacle-type problem}

\author{Sujin Khomrutai}
\address[Sujin Khomrutai]{Department of Mathematics and Computer Science, Faculty of Science, Chulalongkorn University, Bangkok 10330, Thailand}
\email{sujin.k@chula.ac.th}

\author{Armin Schikorra}
\address[Armin Schikorra]{Department of Mathematics,
University of Pittsburgh,
301 Thackeray Hall,
Pittsburgh, PA 15260, USA}
\email{armin@pitt.edu}

\setcounter{tocdepth}{1}

%

\setlength\parindent{0pt}

\belowdisplayskip=18pt plus 6pt minus 12pt \abovedisplayskip=18pt
plus 6pt minus 12pt
\parskip 8pt plus 1pt


\def\eps{\varepsilon}


\def\B{{\mathbb{B}}}

\def\N{{\mathbb N}}

\def\S{{\mathbb S}}

\newtheorem{theorem}{Theorem}
\newtheorem{lemma}[theorem]{Lemma}
\newtheorem{corollary}[theorem]{Corollary}
\newtheorem{proposition}[theorem]{Proposition}

\newtheorem{remark}[theorem]{Remark}


\def\dist{{\rm dist\,}}

\def\curl{{\rm curl\,}}

\def\supp{{\rm supp\,}}


\newcommand{\R}{\mathbb{R}}

\newcommand{\brac}[1]{\left (#1 \right )}

\newcommand{\barint}{
\rule[.036in]{.12in}{.009in}\kern-.16in \displaystyle\int }

\newcommand{\barcal}{\mbox{$ \rule[.036in]{.11in}{.007in}\kern-.128in\int $}}



\def\mvint_#1{\mathchoice
          {\mathop{\vrule width 6pt height 3 pt depth -2.5pt
                  \kern -8pt \intop}\nolimits_{\kern -3pt #1}}%
          {\mathop{\vrule width 5pt height 3 pt depth -2.6pt
                  \kern -6pt \intop}\nolimits_{#1}}%
          {\mathop{\vrule width 5pt height 3 pt depth -2.6pt
                  \kern -6pt \intop}\nolimits_{#1}}%
          {\mathop{\vrule width 5pt height 3 pt depth -2.6pt
                  \kern -6pt \intop}\nolimits_{#1}}}


\numberwithin{theorem}{section} \numberwithin{equation}{section}

\newcommand{\lap}{\Delta }
\newcommand{\aleq}{\precsim}

\renewcommand{\div}{\operatorname{div}}


\let\latexchi\chi
\makeatletter
\renewcommand\chi{\@ifnextchar_\sub@chi\latexchi}
\newcommand{\sub@chi}[2]{
  \@ifnextchar^{\subsup@chi{#2}}{\latexchi^{}_{#2}}%
}
\newcommand{\subsup@chi}[3]{
  \latexchi_{#1}^{#3}%
}
\makeatother

\begin{document}

\begin{abstract}
We consider critical points of the geometric obstacle problem on vectorial maps $u: \B^2 \subset \R^2 \to \R^N$
\[
 \int_{\B^2} |\nabla u|^2 \quad \mbox{subject to $u \in \R^N \backslash \B^N(0)$}.
\]
Our main result is $C^{1,\alpha}$-regularity for any $\alpha < 1$. 

Technically, we split the map $u=\lambda v$, where $v: \B^2 \to \S^{N-1}$ is the vectorial component and $\lambda = |u|$ the scalar component measuring the distance to the origin. While $v$ satisfies a weighted harmonic map equation with weight $\lambda^2$, $\lambda$ solves the obstacle problem for
\[
 \int_{\B^2} |\nabla \lambda|^2+\lambda^2 |\nabla v|^2, \quad \mbox{subject to $\lambda \geq 1$}.
\]
where $|\nabla v|^2 \in L^1(\B^2)$. We then play ping-pong between the increases in the regularity of $\lambda$ and $v$ to obtain finally the $C^{1,\alpha}$-result.
\end{abstract}

\sloppy

\maketitle
\tableofcontents
\sloppy
\section{Introduction}

Denote by
\[
D(u) :=  \int_{\B^2} |\nabla u|^2.
\]
the Dirichlet energy for maps defined on the two-dimensional disk $\B^2 \subset \R^2$. 

The classical obstacle problem for a given obstacle function $\omega: \B^2 \to \R$ analyzes the minimizer
\[
\inf_{ f \geq \omega} \int_{B^2} |\nabla f|^2 
\]
One can reformulate the obstacle problems for graphs $u = (x,f(x))$  as analyzing the minimizer of the problem
\[
\inf_{ X_\Omega} \int_{B^2} |\nabla u|^2,
\]
where 
\[
\Omega = \left \{ (x,t) \in \B^2\times \R: t < \varphi(x) \right \}
\]
and the infimum is taken over the set of maps not touching $\Omega$.
\begin{equation}\label{eq:obstacleclass}
X_\Omega := \left \{u \in H^1(\B^2,\R^3):\  u \not \in  \Omega\right \}
\end{equation}
It is a natural to consider this situation for sets $\Omega$ whose boundary is smooth and compact, but which may not be a graph. In this case, $u$ can be thought of as a soap film in three-dimensional space which lives outside of a solid ball. Where the soap film intersects with the solid ball, a free boundary appears. 

Geometric obstacle problems have been considered, e.g. \cite{HN82} but this is quite different from our case. Much closer to our situation, considering minimizers, is the setup as in \cite{H73,DF86}. Since the obstacle problem is not convex anymore, it is natural to consider not only minimizers but also critical points, which we shall do in this work.

A first observation is that the geometric setting immediately leads to regularity issues: while in the classical obstacle theory, basic $C^{1,\alpha}$-regularity is quite easy to obtain, already for the simplest case of round obstacles $\Omega = \B^{N-1}$, any harmonic function into $\partial \B^{N-1} = \S^{N-1}$ is necessarily a critical point of the obstacle problem. Indeed we have,
\begin{proposition}
Let $\bar{v}$ be a minimizing harmonic map from $\B^n \to \S^{N-1}$ with respect to its own boundary values, then $u:= \bar{v}$ minimizes the Dirichlet energy in the class $X_{\B^{N-1}}$ with respect to its own boundary values.

If $\bar{v}$ is a critical (possibly non-minimizing) harmonic map from $\B^2 \to \S^{N-1}$, then $\bar{v}$ is a critical map for the Dirichlet energy with respect tot he class $X_{\B^{N-1}}$.
\end{proposition}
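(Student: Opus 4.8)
The strategy is to push $\bar v$ through the polar decomposition $u=\lambda w$, $\lambda:=|u|$, $w:=u/|u|$, used throughout this paper, and to exploit that $|\bar v|\equiv 1$, so that $\bar v$ sits exactly on the free boundary $\{|u|=1\}$ of the admissible set. In particular $\bar v\in X_{\B^{N-1}}$ because $|\bar v|=1$ keeps $\bar v$ out of the open obstacle $\B^N(0)$, and since the scalar part of $\bar v$ is constant one has $D(\bar v)=\int_{\B^2}|\nabla\bar v|^2$, i.e.\ the Dirichlet energy of $\bar v$ as an $\R^N$-valued map agrees with its energy as an $\S^{N-1}$-valued map.

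For the minimizing assertion I would take any competitor $u\in X_{\B^{N-1}}$ with $u=\bar v$ on $\partial\B^n$, write $u=\lambda w$ with $\lambda=|u|\ge 1$ and $w=u/|u|\colon\B^n\to\S^{N-1}$, and use two facts: (i) $y\mapsto y/|y|$ is Lipschitz on $\{|y|\ge 1\}$, hence $w\in H^1(\B^n,\S^{N-1})$ with $|\nabla w|\le 2|\nabla u|$ a.e., and, since $|\bar v|=1$ on the boundary, $w=\bar v$ on $\partial\B^n$; and (ii) the orthogonal splitting $|\nabla u|^2=|\nabla\lambda|^2+\lambda^2|\nabla w|^2$, which follows from $w\cdot\nabla w=\tfrac12\nabla|w|^2=0$. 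Combining with $\lambda\ge 1$ and the minimality of $\bar v$ in its own boundary class gives
\[
 D(u)=\int_{\B^n}|\nabla\lambda|^2+\lambda^2|\nabla w|^2\ \ge\ \int_{\B^n}|\nabla w|^2\ \ge\ \int_{\B^n}|\nabla\bar v|^2\ =\ D(\bar v),
\]
which is the claim.

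For the critical-point assertion I would test with an admissible family $(u_t)_{t\ge 0}\subset X_{\B^{N-1}}$, $u_0=\bar v$, $u_t=\bar v$ on $\partial\B^2$, with enough regularity in $t$ (say $C^1$ into $H^1\cap L^\infty$ with $|u_t|\ge 1$), write $u_t=\lambda_t w_t$, and differentiate the decomposed energy at $t=0$. Since $\lambda_0\equiv 1$ (so $\nabla\lambda_0=0$) and the constraint $\lambda_t\ge 1$ forces $\dot\lambda_0\ge 0$, this yields
\[
 \frac{d}{dt}\Big|_{t=0^+}D(u_t)\ =\ 2\int_{\B^2}\dot\lambda_0\,|\nabla\bar v|^2\ +\ 2\int_{\B^2}\nabla\bar v:\nabla\dot w_0 .
\]
The second integral vanishes: $\dot w_0\in H^1_0\cap L^\infty$ is tangent to $\S^{N-1}$ along $\bar v$ (differentiate $|w_t|^2\equiv 1$ and $w_t|_{\partial}=\bar v|_{\partial}$), so $\bar v\cdot\dot w_0=0$, and the weak harmonic map equation $-\Delta\bar v=|\nabla\bar v|^2\,\bar v$ tested against $\dot w_0$ gives $\int_{\B^2}\nabla\bar v:\nabla\dot w_0=\int_{\B^2}|\nabla\bar v|^2\,\bar v\cdot\dot w_0=0$. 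Hence $\frac{d}{dt}\big|_{0^+}D(u_t)=2\int_{\B^2}\dot\lambda_0\,|\nabla\bar v|^2\ge 0$, which is the variational inequality characterising a critical map of the obstacle problem. Inner (domain) variations $u_t=\bar v(\cdot+tX)$ preserve the constraint $|u_t|\ge 1$ automatically and depend only on $\nabla\bar v$, so if $\bar v$ is also stationary as a harmonic map this transfers verbatim and contributes nothing new.

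The only real work is the bookkeeping for the outer variations: justifying the differentiation of $u_t=\lambda_t w_t$ under the integral (a modest regularity statement on $t\mapsto u_t$), and then recognising that the single surviving, sign-definite term is precisely the $\dot\lambda_0\ge 0$ contribution produced by the active obstacle, the harmonic-map hypothesis entering only as a black box that annihilates the tangential term. Matching this first-variation inequality to the exact notion of ``critical map'' adopted elsewhere in the paper is the one point that needs a little care.
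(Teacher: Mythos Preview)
Your argument is correct and, for the minimizing statement, is exactly the paper's proof: the polar split $u=\lambda w$, the orthogonality $w\cdot\nabla w=0$, and the chain $D(u)=\int|\nabla\lambda|^2+\lambda^2|\nabla w|^2\ge\int|\nabla w|^2\ge\int|\nabla\bar v|^2=D(\bar v)$. The paper states only this inequality (with the ``equality iff $\lambda\equiv1$'' observation) and then writes ``The conclusion now follows.''

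For the critical statement you do noticeably more than the paper. The paper's implicit argument is to use its own notion of criticality from Theorem~\ref{th:main2}, which decouples into a $\lambda$-variation and a $v$-variation. With $\lambda\equiv1$ one has $\tfrac{d}{d\eps}\big|_0 E(1+\eps\varphi,\bar v)=2\int\varphi\,|\nabla\bar v|^2\ge0$ for $\varphi\ge0$, and $\tfrac{d}{d\eps}\big|_0 E(1,(\bar v+\eps\psi)/|\bar v+\eps\psi|)=0$ is precisely the hypothesis that $\bar v$ is a critical harmonic map into $\S^{N-1}$. That is a two-line check. You instead test against an arbitrary one-parameter family $u_t\in X_{\B^{N-1}}$, decompose it, differentiate, and kill the tangential term via the weak harmonic map equation. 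This is a genuinely more general first-variation computation (it covers coupled variations, not just the product ones of Theorem~\ref{th:main2}), at the cost of the differentiability-in-$t$ bookkeeping you flag. Your closing caveat about ``matching to the exact notion of critical map'' is well placed: in the paper's framework the decoupled test is all that is required, which is why the paper can dispatch the critical case in one sentence.
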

\begin{proof}
We split $u = \lambda v$, where $\lambda = |u| \geq 1$ and $v = \frac{u}{|u|} \in H^1(\B^2,\S^{2})$. Since $v \cdot \nabla v \equiv 0$, we have
\[
|\nabla u|^2 = |\nabla \lambda\, v + \lambda \nabla v|^2 = |\nabla \lambda|^2 + |\lambda|^2 |\nabla v|^2
\]
In particular,
\[
\int_{\B^n} |\nabla u|^2 \geq \int |\nabla v|^2
\]
with equality if and only if $\lambda \equiv 1$.
The conclusion now follows.
\end{proof}
In particular, for $n \geq 3$ there is no hope of obtaining even mere continuity at the free boundary for the solutions of the obstacle problem: harmonic maps may only be smooth for $n \geq 3$ on a large set (not everywhere), see \cite{SU84,DF86}, and if we consider critical harmonic maps may be everywhere discontinuous, see \cite{R95}. 

This is why, for now, we shall restrict our attention to $n=2$. The main result of this work is the basic regularity theory for spherical obstacles.

\begin{theorem}\label{th:main}
Let $\Omega = \B^{N-1} \subset \R^N$ be the solid unit ball. Denote the obstacle class $X_\Omega$ as in \eqref{eq:obstacleclass}. Then any map of which is critical in $X_\Omega$ with respect to $D(\cdot)$ is $C^{1,\alpha}$-smooth, for any $\alpha < 1$.
\end{theorem}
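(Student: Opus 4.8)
The plan is to exploit the splitting $u = \lambda v$ advertised in the abstract, where $\lambda = |u| \geq 1$ and $v = u/|u| \in H^1(\B^2,\S^{N-1})$, and to derive from criticality of $u$ the two coupled Euler--Lagrange systems: a \emph{weighted harmonic map equation} for $v$ with weight $\lambda^2$, and a scalar \emph{obstacle problem} for $\lambda$ with respect to the energy $\int |\nabla \lambda|^2 + \lambda^2|\nabla v|^2$ under the constraint $\lambda \geq 1$. The first step is therefore to make the variations rigorous: inner variations of $u$ that preserve the constraint $u \notin \B^N$ come in two flavors — angular variations $u_t = \lambda\, \pi_{\S}(v + t\phi)$ with $\phi \perp v$, which see no constraint at all and hence give the $v$-equation as an \emph{equality} $-\divv(\lambda^2 \nabla v) = \lambda^2 |\nabla v|^2 v$; and radial variations $u_t = (\lambda + t\psi) v$ with $\psi \geq 0$ where $\lambda = 1$, which give the obstacle \emph{inequality} $-\Delta \lambda + \lambda|\nabla v|^2 \geq 0$ in the sense of distributions, with equality on $\{\lambda > 1\}$.

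The core of the argument is then a bootstrap ("ping-pong") on a small disk, starting from the only a priori information: $u \in H^1$, hence $|\nabla v|^2 \in L^1(\B^2)$ and $1 \leq \lambda$, with $\nabla \lambda \in L^2$. The first gain comes from the $v$-equation: $-\divv(\lambda^2 \nabla v) = \lambda^2 |\nabla v|^2 v$ is a critical harmonic-map-type system in two dimensions with an $L^\infty$, elliptic weight (since $1 \leq \lambda^2$ and, by the obstacle structure plus $\lambda \in L^\infty_{\loc}$ from $H^1 \cap$ boundedness arguments, $\lambda^2$ is bounded above), so Rivière-type / Hélein-type compensation-compactness machinery — rewriting the right-hand side via the antisymmetric structure $v \cdot \nabla v = 0$ and a Uhlenbeck-type gauge for the weighted connection — yields $v \in C^{0,\beta}_{\loc}$ and in fact $v \in W^{1,p}_{\loc}$ for some $p > 2$. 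Feeding $|\nabla v|^2 \in L^{p/2}_{\loc}$ with $p/2 > 1$ into the obstacle problem for $\lambda$: the obstacle operator is $-\Delta \lambda = -\lambda|\nabla v|^2 + \mu$ with $\mu \geq 0$ a measure supported on the coincidence set $\{\lambda = 1\}$; classical obstacle regularity (the obstacle here is the constant $1$, which is trivially smooth) then upgrades $\lambda$ to $C^{1,\gamma}_{\loc}$, and in particular $\nabla \lambda \in L^\infty_{\loc}$, $\lambda \in W^{2,q}_{\loc}$ for every $q < \infty$. Now return to the $v$-equation with a $C^{1,\gamma}$ (hence $W^{1,q}$ for all $q$) weight $\lambda^2$: the system for $v$ is now a genuine harmonic-map equation perturbed by lower-order terms with good integrability, and in two dimensions $v \in W^{1,p}$ with $p > 2$ forces, via the equation, $v \in W^{2,q}_{\loc}$ for every $q < \infty$, hence $v \in C^{1,\alpha}_{\loc}$ for every $\alpha < 1$. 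One more pass through the obstacle problem with $|\nabla v|^2 \in C^{0,\alpha}_{\loc} \subset L^\infty_{\loc}$ gives $\lambda \in C^{1,\alpha}_{\loc}$ for every $\alpha < 1$ — here one uses that for the obstacle problem $-\Delta \lambda = f + \mu$ with $f \in L^\infty$ the solution is $C^{1,\alpha}$ for all $\alpha < 1$ (the optimal $C^{1,1}$ would need $f$ continuous up to the free boundary, which we do not claim). Assembling, $u = \lambda v \in C^{1,\alpha}_{\loc}$ for every $\alpha < 1$, and a covering argument extends this to interior regularity on all of $\B^2$; boundary regularity, if desired, follows from the smoothness of $\partial\B^{N-1}$ and standard reflection.

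The main obstacle I expect is the \emph{first} regularity gain for $v$ from $H^1$ alone — at the borderline $n = 2$ the right-hand side $\lambda^2|\nabla v|^2$ is only $L^1$, so one cannot use naive elliptic estimates; one must identify the correct antisymmetric/compensated structure of the \emph{weighted} system $-\divv(\lambda^2\nabla v) = \lambda^2|\nabla v|^2 v$. Concretely, writing $\Omega_{ij} = v_i \nabla v_j - v_j \nabla v_i$ one has $-\divv(\lambda^2 \nabla v^i) = \lambda^2 \Omega_{ij}\cdot \nabla v^j$ with $\Omega$ antisymmetric, and the extra factor $\lambda^2 \in [1,C]$ must be absorbed into a modified Coulomb gauge (Rivière's gauge-theoretic approach tolerates $W^{1,2}$ perturbations of the connection, and here the multiplicative weight is even better behaved once we know $\lambda$ is bounded). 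A secondary subtlety is the mild circularity in the bootstrap: before the first run of Rivière's theorem we need $\lambda \in L^\infty_{\loc}$ to know the weight is elliptic two-sidedly, and this must be extracted directly from the obstacle inequality $-\Delta \lambda \leq 0$ off the coincidence set together with $\lambda \in H^1$ — i.e. $\lambda$ is subharmonic where $\lambda > 1$, hence locally bounded — so one should establish this a priori bound before entering the ping-pong. Once these two points are in place the remaining iterations are routine elliptic and obstacle-problem regularity in two dimensions.
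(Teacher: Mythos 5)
Your setup (splitting $u=\lambda v$, the weighted harmonic map equation for $v$, the variational inequality for $\lambda$, and the need for compensation/antisymmetric structure at the first step) matches the paper. But there is a genuine gap in the middle of your bootstrap: after obtaining only $v\in W^{1,p}_{\loc}$ for \emph{some} $p>2$, you claim that feeding $|\nabla v|^2\in L^{p/2}_{\loc}$ (with $p/2$ barely above $1$) into the obstacle problem ``upgrades $\lambda$ to $C^{1,\gamma}_{\loc}$, and in particular $\nabla\lambda\in L^\infty_{\loc}$, $\lambda\in W^{2,q}_{\loc}$ for every $q<\infty$.'' This is false at that stage: obstacle-problem regularity with right-hand side in $L^q$ yields at best $\lambda\in W^{2,q}_{\loc}$ (via a Lewy--Stampacchia type bound), and in two dimensions $C^{1,\gamma}$ (equivalently $\nabla\lambda\in L^\infty$) requires $q>2$, i.e. $\nabla v\in L^p$ with $p>4$, which you have not yet established. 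Since your subsequent steps (the $W^{2,q}$-for-all-$q$ regularity of $v$, and the final $C^{1,\alpha}$ pass) all lean on the $C^{1,\gamma}$ weight, the chain breaks here. Moreover the variational inequality for $\lambda$ is not quite the classical one: it carries the extra term $\int \lambda(\varphi-\lambda)|\nabla v|^2$ with $|\nabla v|^2$ of low regularity, so even the $W^{2,2}$ step cannot simply be quoted from textbook obstacle theory; the paper adapts Frehse's difference-quotient argument for precisely this reason.

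The paper's route around this is the part your proposal skips: while $\lambda$ is still only $W^{1,2}\cap L^\infty_{\loc}$, one first pushes $\nabla v$ into $L^p_{\loc}$ for \emph{every} $p<\infty$ (and then $v\in W^{2,q}_{\loc}$ for $q<2$). This is done via a priori Morrey-type decay estimates for the system $\divv(\lambda^2\nabla v)=\Omega\cdot\lambda^2\nabla v$ that are \emph{uniform in $p$} on a range $(p_0,p_\infty)$, using the Hodge decomposition of $\lambda^2\nabla v$, Wente's lemma together with the weighted conservation law $\divv(\lambda^2\Omega_{ij})=0$ for the $p=2$ case, and a Sharp--Topping style iteration of exponents; only with $|\nabla v|^2\in W^{1,q}_{\loc}$ ($q<2$) in hand does one return to $\lambda$ (Frehse $\Rightarrow W^{2,2}$, then $v\in W^{2,p}$ for all $p$, then the viscosity inequalities $0\le \lap\lambda\le\Lambda$ give $C^{1,\alpha}$ for all $\alpha<1$). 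Alternatively, one could try to repair your ping-pong by iterating finitely many times with exponent bookkeeping until $\nabla v\in L^p$, $p>4$, but that requires carrying out the exponent gains quantitatively, which your write-up does not do. A secondary, fixable gloss: your justification of $\lambda\in L^\infty_{\loc}$ (``subharmonic where $\lambda>1$'') is not meaningful as stated since $\{\lambda>1\}$ need not be open for an $H^1$ function; the paper proves boundedness by a Stampacchia-type truncation test function in the variational inequality.
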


In future works we plan to analyze the free boundary, where $u$ intersects with $\partial \Omega$, as well as more general obstacles.

Let us also state that as a by-product of our arguments we obtain the following regularity result for harmonic maps into the (non-compact) manifold of conformal transformations.
\begin{theorem}\label{th:mainSO}
Denote the group of conformal transformations with conformal factor bounded from below by $\lambda_0$ as \[CO_{\lambda_0}(N) = \left \{\lambda\, Q  \in \R^{N \times N}: Q \in SO(N), \lambda > \lambda_0 \right \}\] 

Then for $\lambda_0 > 0$, any map $P \in H^1(\B^2,CO_{\lambda_0}(N))$ which is a critical point of the Dirichlet energy $D(\cdot)$ in the class of maps into $CO_{\lambda_0}(N)$ belongs to $C^{1,\alpha}$ for any $\alpha < 1$.
\end{theorem}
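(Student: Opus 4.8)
The plan is to reduce the statement about $P \in H^1(\mathbb{B}^2, CO_{\lambda_0}(N))$ to the already-proven Theorem \ref{th:main} by recognizing that a conformal matrix $\lambda Q$ is exactly the data of a scalar $\lambda > \lambda_0$ together with a rotation $Q \in SO(N)$, and that this decomposition is precisely the $u = \lambda v$ splitting in disguise. Let me think about how to carry this out.

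First I would record the algebra. Writing $P = \lambda Q$ with $\lambda = \left(\frac{1}{N}|P|^2\right)^{1/2}$ (Frobenius norm, so $|Q|^2 = N$) and $Q = P/\lambda \in SO(N)$, the orthogonality $Q^T Q = \mathrm{id}$ gives $Q^T \nabla Q$ skew-symmetric, hence $\langle Q, \nabla Q\rangle = \frac12 \nabla |Q|^2 = 0$ componentwise in the Frobenius inner product, exactly as $v \cdot \nabla v \equiv 0$ in the sphere case. Therefore
\[
|\nabla P|^2 = |\nabla \lambda\, Q + \lambda \nabla Q|^2 = N|\nabla \lambda|^2 + \lambda^2 |\nabla Q|^2,
\]
so up to the harmless constant $N$ the Dirichlet energy $D(P)$ is the same coupled functional $\int_{\mathbb{B}^2} |\nabla \lambda|^2 + \lambda^2|\nabla Q|^2$ studied in the body of the paper, now with the constraint $\lambda > \lambda_0$ (an \emph{open} obstacle). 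Criticality of $P$ in the class of maps into $CO_{\lambda_0}(N)$ then unpacks, by the same variation-splitting used to derive the Euler--Lagrange system for $u=\lambda v$, into: (i) $Q$ satisfies the $\lambda^2$-weighted harmonic map equation into $SO(N)$, and (ii) $\lambda$ is critical for $\int |\nabla\lambda|^2 + \lambda^2|\nabla Q|^2$ subject to $\lambda \geq \lambda_0$ — indeed here, since $\lambda_0$ is never attained on an interior critical point unless forced, $\lambda$ is simply a genuine (unconstrained, or one-sided) critical point, which only makes the analysis easier than in Theorem \ref{th:main}.

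Next I would invoke the structure of the regularity argument rather than the statement of Theorem \ref{th:main}: the "ping-pong" bootstrap in the main proof only uses that the vectorial component takes values in a \emph{smooth compact} target and satisfies a weighted harmonic-map equation with an $L^1$-Dirichlet-density weight $\lambda^2$ bounded below, and that the scalar component solves the one-sided obstacle problem for the coupled energy. Since $SO(N)$ is a smooth compact manifold, the antisymmetric-potential / Rivière-type estimates that control $v$ apply verbatim to $Q$ (the Euler--Lagrange equation $-\Delta Q = Q\,(\text{skew})\cdot$ plus the $\nabla\log\lambda^2$ drift is of the same form), and the obstacle/Caccioppoli arguments for $\lambda$ are insensitive to whether the obstacle level is $1$ or $\lambda_0$. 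So the same iteration yields $\lambda, Q \in C^{1,\alpha}$ for every $\alpha<1$, hence $P = \lambda Q \in C^{1,\alpha}$.

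The main obstacle I anticipate is purely bookkeeping, not conceptual: one must check that every step of the main proof was written in sufficient generality — or can be trivially re-read — to accommodate a matrix-valued $Q$ into $SO(N)$ rather than a vector $v$ into $\S^{N-1}$, in particular that the commutator/compensated-compactness structure ($Q^T\nabla Q$ skew, the Hodge decomposition of the skew potential, the Wente-type $L^\infty$--$L^2$ estimates) is used only through the identity $\langle Q,\nabla Q\rangle=0$ and the smoothness of the target. I would therefore phrase the body of the paper's lemmas for a general smooth compact target $\mathcal{N}$ whenever costless, and here simply remark that $(\lambda, Q)$ with target $SO(N)$ is an instance, with the constraint $\lambda \ge \lambda_0>0$ in place of $\lambda\ge 1$; the conclusion of Theorem \ref{th:mainSO} is then immediate. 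A minor point to dispatch is that $\lambda > \lambda_0$ is open while the obstacle class uses $\lambda \geq \lambda_0$: since a $C^{1,\alpha}$ conclusion is local and the set $\{\lambda = \lambda_0\}$ plays the role of the (possibly empty) free boundary, this changes nothing in the regularity statement.
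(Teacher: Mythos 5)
Your proposal follows essentially the same route as the paper's Section~\ref{s:CO}: split $P=\lambda Q$, use the skew-symmetry of $Q^T\nabla Q$ to decouple the energy into $\int|\nabla\lambda|^2+\lambda^2|\nabla Q|^2$, and rerun the ping-pong argument with $SO(N)$ in place of $\S^{N-1}$ and $\lambda_0$ in place of $1$. The only difference is one of explicitness: the paper derives the Euler--Lagrange equation and the weighted conservation law $\div(\lambda^2 P^T\nabla P)=0$ via rotational variations $P e^{\eps\alpha\varphi}$, $\alpha\in so(N)$, which is exactly the structural input (antisymmetric potential plus div--curl form for the Wente step) that you assert carries over.
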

The proof is almost verbatim to the one of Theorem~\ref{th:main}, we point out the differences in Section~\ref{s:CO}.

\subsection{A reformulation of Theorem~\ref{th:main}}
In order to prove Theorem~\ref{th:main} we represent any point $u$ in $\R^N \backslash \B^N$ uniquely as
\[
u = \lambda v,
\]
where $v = \frac{u}{|u|} \in \S^{N-1}$ and $\lambda = |u| >0$.

If $u \in H^1(\B^2,\R^n \backslash \B_1^N)$ then $\lambda \in H^1(\B^2)$ is a scalar function and $v \in H^1(\B^2,\S^{N-1})$. In particular we have $\langle v, \nabla v \rangle = 0$, which leads to
\[
|\nabla u|^2 = |\nabla \lambda \, v + \lambda \nabla v|^2 = |\nabla \lambda|^2 + \lambda^2 |\nabla v|^2. 
\]
Consequently, Theorem~\ref{th:main} can be reformulated as
\begin{theorem}\label{th:main2}
Let $(\lambda,v) \in H^1(\B^2) \times H^1(\B^2,\S^{N-1})$ be a critical map with respect to the energy
\[
E(\lambda,v) := \int |\nabla \lambda|^2 + \int \lambda^2 |\nabla v|^2
\]
and subject to $\lambda \geq \lambda_0$. That is, 
\begin{itemize}
\item assume that
\[
\frac{d}{d\eps}\Big |_{\eps = 0} E(\lambda + \eps \varphi,v)  \geq 0
\]
holds whenever $\varphi \in H^1_0(\B^2)$, and $\lambda + \eps \varphi \geq \lambda$ almost everywhere in $\B^2$ and $(\lambda + \eps \varphi) v \in H^1(\B^2)$ for small $\eps$.
\item and 
\[
\frac{d}{d\eps}\Big |_{\eps = 0} E \left (\lambda ,\frac{v+\eps \psi}{|v + \eps \psi|} \right )  = 0
\]
holds for any $\psi \in C_c^\infty(\B^2,\R^N)$
\end{itemize}
Then $u = \lambda v \in C^{1,\alpha}$ for some $\alpha > 0$.
\end{theorem}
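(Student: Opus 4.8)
The plan is to run a bootstrap ("ping-pong") argument alternating between the two Euler--Lagrange relations, starting from the mere $H^1$ information and the structural fact that $|\nabla v|^2 \in L^1$. First I would write down the two equations explicitly. The $v$-variation gives a weighted harmonic map equation: $\operatorname{div}(\lambda^2 \nabla v) = -\lambda^2 |\nabla v|^2 v$, equivalently $\operatorname{div}(\lambda^2 \nabla v) \perp T_v \S^{N-1}$. The $\lambda$-variation, because we only have one-sided (obstacle) variations $\varphi \geq 0$, gives a variational inequality: $-\Delta \lambda + \lambda |\nabla v|^2 \geq 0$ as a distribution/measure, with complementarity on the coincidence set $\{\lambda = \lambda_0\}$, where in fact $\Delta\lambda$ and $\lambda|\nabla v|^2$ force the obstacle reaction to be a nonnegative measure supported on $\{\lambda=\lambda_0\}$; on the non-coincidence (open) set $\lambda$ solves the equation $-\Delta\lambda = -\lambda|\nabla v|^2$.

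The engine of the bootstrap is two-dimensionality. For the $v$-equation, the natural tool is the Rivi\`ere-type / Hélein moving-frame structure for (weighted) harmonic maps into spheres: writing the equation in divergence form and exploiting the antisymmetric/Jacobian structure of $\lambda^2 \nabla v \cdot (v \otimes \nabla v - \nabla v \otimes v)$, one gets a system with an $L^1$ right-hand side of Jacobian (div-curl) type, to which Wente's lemma and the Coifman--Lions--Meyer--Semmes Hardy-space estimates apply, yielding an initial continuity/$\log$-gain for $v$ provided the weight $\lambda^2$ is, say, continuous and bounded below. Since $\lambda \geq \lambda_0 > 0$ and (from the obstacle side, once we know $|\nabla v|^2$ sits in a slightly better space) $\lambda$ is continuous, this is available. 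Conversely, the obstacle problem for $\lambda$ has the classical $C^{1,\alpha}$-theory once its right-hand side $\lambda |\nabla v|^2$ is known to lie in $L^p$ for some $p>1$: by Calderón--Zygmund one upgrades $\lambda \in W^{2,p}_{\rm loc} \hookrightarrow C^{1,\alpha}$. So the scheme is: (i) from $v \in H^1$ and the $v$-equation, use the Hardy-space/Wente structure to get $\nabla v \in L^p_{\rm loc}$ for some $p > 2$, hence $|\nabla v|^2 \in L^{p/2}_{\rm loc}$ with $p/2 > 1$; (ii) feed this into the obstacle problem to get $\lambda \in W^{2,p/2}_{\rm loc} \hookrightarrow C^{1,\beta}_{\rm loc}$, in particular $\lambda$ and $\nabla(\lambda^2)$ are bounded/Hölder; (iii) with a now-regular weight, reinvest into the $v$-equation — the weighted harmonic map system with Hölder weight and the antisymmetric potential theory (Rivi\`ere/Uhlenbeck gauge, or direct Hélein frame since the target is a sphere) — to gain $v \in W^{1,q}_{\rm loc}$ for all $q<\infty$, even $v \in C^{0,\alpha}$ for all $\alpha<1$; (iv) then $|\nabla v|^2 \in L^q_{\rm loc}$ for all $q$, so $\lambda \in W^{2,q}_{\rm loc} \hookrightarrow C^{1,\alpha}$ for all $\alpha<1$; (v) finally, having both $\lambda$ and $v$ Hölder with good derivatives, close the loop on $v$ to get $\nabla v \in C^{0,\alpha}$, hence $u = \lambda v \in C^{1,\alpha}$ for all $\alpha<1$. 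The coincidence set contributes only a nonnegative measure, which is harmless for the upper regularity of $\lambda$ and which the classical obstacle estimate absorbs.

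The main obstacle I expect is the \emph{first} half-step: extracting \emph{any} gain beyond $H^1$ for $v$ when the weight $\lambda^2$ is only known to be in $H^1 \cap L^\infty_{\rm loc}$ (bounded below). Plain Wente estimates want at least a continuous coefficient. The resolution is to exploit that in $2$D, $H^1 \subset \mathrm{BMO}\cap \mathrm{VMO}$ and that the ``bad'' quadratic term $\lambda^2|\nabla v|^2 v$ has the div-curl/Jacobian structure needed for the Coifman--Lions--Meyer--Semmes $\mathcal{H}^1$ estimate, combined with a small-energy localization: on small balls the $H^1$-norm of $\nabla v$ is small, so a fixed-point/continuity-method argument in a slightly-better-than-$L^2$ Morrey or Lorentz space closes. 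A secondary subtlety is justifying that the admissible obstacle variations (which must keep $(\lambda+\eps\varphi)v \in H^1$) are rich enough to produce the standard variational inequality — but since $v \in H^1 \cap L^\infty$ and $\varphi \in H^1_0 \cap L^\infty$ (a dense enough class) make $(\lambda+\eps\varphi)v \in H^1$, this is routine. Once the first gain is in hand, each subsequent step is a standard elliptic bootstrap, and the alternation terminates at $C^{1,\alpha}$ for every $\alpha<1$ because neither the weighted harmonic map estimate nor the obstacle estimate loses more than an arbitrarily small amount of integrability at each pass.
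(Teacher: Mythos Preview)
Your overall ping-pong strategy matches the paper's, and your identification of the Wente/div-curl mechanism for the $v$-equation and the obstacle-problem regularity for the $\lambda$-equation is correct. However, there is a genuine gap at the very start: you assume (and twice write) that the weight $\lambda^2$ lies in $H^1 \cap L^\infty_{\rm loc}$, but you never establish the $L^\infty_{\rm loc}$ bound, and your one attempt --- deducing continuity of $\lambda$ from the obstacle problem ``once $|\nabla v|^2$ sits in a slightly better space'' --- is circular, since any improvement on $v$ via the weighted equation $\operatorname{div}(\lambda^2 \nabla v) = \Omega\cdot\lambda^2\nabla v$ already requires uniform ellipticity, i.e.\ $\lambda \in L^\infty_{\rm loc}$. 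In two dimensions $H^1$ does not embed into $L^\infty$, so this is not free. The paper handles this as its very first step (Proposition~\ref{pr:lambdabd}) by a De~Giorgi-type truncation: test the $\lambda$-inequality with (cut-offs of) $\eta = (\lambda - K)_+$ and exploit the \emph{sign} of the lower-order term $\int \lambda\,\eta\,(\lambda-K)\,|\nabla v|^2$, which is nonnegative and can be discarded, to force $\lambda \leq K$. This sign observation is the nontrivial ingredient missing from your outline.

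A secondary difference is the ordering of the bootstrap. You alternate rapidly (one gain on $v$, then immediately $\lambda \in W^{2,p/2}\hookrightarrow C^{1,\beta}$, then back to $v$), whereas the paper first pushes $v$ as far as possible under the sole hypothesis $\lambda \in W^{1,2}\cap L^\infty$ --- all the way to $\nabla v \in L^p_{\rm loc}$ for every $p<\infty$ and $v \in W^{2,2-\varepsilon}_{\rm loc}$, which is sharp by the Sharp--Topping counterexamples --- and only then upgrades $\lambda$, first to $W^{2,2}_{\rm loc}$ via Frehse's discrete-differentiation method for variational inequalities, and finally to $C^{1,\alpha}$ by a viscosity argument once $|\nabla v|^2 \in L^\infty$. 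Your step ``$|\nabla v|^2\in L^{p/2}\Rightarrow \lambda\in W^{2,p/2}$ by Calder\'on--Zygmund'' is not Calder\'on--Zygmund but obstacle-problem $W^{2,q}$ regularity with $L^q$ data (Brezis--Kinderlehrer/Frehse), and it again needs $\lambda\in L^\infty$ to put $\lambda|\nabla v|^2$ into $L^{p/2}$. Once the $L^\infty$ gap is filled your faster alternation is viable, but the paper's route has the advantage of only invoking the $W^{2,2}$ case of Frehse's theory rather than general $L^q$-obstacle regularity.
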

\begin{remark}
\begin{itemize}
\item By an easy adaptation of the proof one can show that $\lambda \geq 1$ can be replaced by $\lambda \geq \lambda_0$ where $\lambda_0 \in C^\infty(\overline{\B^2},(0,\infty))$ with $\inf_{\B^2} \lambda_0 > 0$. Observe that e.g. for starshaped obstacles the approach is much more complicated: Then one would need to assume $\lambda \geq \lambda_0(v)$, i.e. have to consider an obstacle depending on $v$, which heavily complicates the variation in $v$.

\item Moreover, observe that $E$ as above is convex in $\lambda$, but not in $v$. That is, the only critical points in terms of $\lambda$ are minimizers, but again not necessarily so $v$.
\end{itemize}
\end{remark}

\subsection{Outline of the proof}
The proof of Theorem~\ref{th:main2} is split into several parts, since we have to jump between improvements in regularity of $\lambda$ and $v$. 
First we prove in Section~\ref{s:lbounded} local boundedness of $\lambda$, see Proposition~\ref{pr:lambdabd}. Then we compute the Euler-Lagrange equations for $v$, in Section~\ref{s:vel}. Since by now we have shown that $\lambda$ is locally bounded from above and below the Euler-Lagrange equations are uniformly elliptic equations with $W^{1,2}$-coefficients. We prove a priori $L^p$-estimates for such equations in Section~\ref{s:uniformlp}, which might be interesting in their own right -- see Proposition~\ref{pr:vpg2uniformmorrey2}. In Section~\ref{s:vw22} we then obtain successively for $v$ H\"older regularity, Proposition~\ref{pr:initialreg}, $W^{1,p}$-regularity for any $p < \infty$, in Proposition~\ref{pr:vl2peps} and finally $W^{2,2-\eps}$-regularity in Corollary~\ref{co:vW22}. This is the optimal regularity one can hope for without having better estimates on $\lambda$, see \cite{ST13}. So in Section~\ref{s:W22lambda} we turn to improving the regularity $\lambda$, and the already obtained regularity for $v$ allows us to obtain $W^{2,2}$-estimates for $\lambda$ which in turn lead to $W^{2,p}$-estimate for $v$ for any $p < \infty$, see Corollary~\ref{co:w22lw2pv}. Lastly, with the regularity already obtained for $\lambda$ and $v$ we show in Section~\ref{s:lambdac1a} that $\lambda$ solves an elliptic inequality in viscosity sense, and we obtain $C^{1,\alpha}$-regularity of $\lambda$. With this we conclude the promised regularity of $u = \lambda v$.

\section{Boundedness of \texorpdfstring{$\lambda$}{lambda}}\label{s:lbounded}
The scalar function $\lambda$ is a solution to a classical (graph-)obstacle problem, however for the energy
\[
\lambda \mapsto \int |\nabla \lambda|^2 + \int \lambda^2 |\nabla v|^2
\]
But observe that $|\nabla v|^2 \in L^1(\R^n)$, only. In particular, a priori for general $|\nabla v|^2 \in L^1(\R^n)$, we cannot hope that $\lambda$ is very smooth. For now we have to content ourselves with the (local) boundedness of $\lambda$.

\begin{proposition}[Boundedness of $\lambda$]\label{pr:lambdabd}
Let $\lambda$, $v$ be as in Theorem~\ref{th:main2}. Then $\lambda \in L^\infty_{loc}(\B^2)$, that is for any compact set $K \subset \B^2$ we have that $\lambda \in L^\infty(K)$.
\end{proposition}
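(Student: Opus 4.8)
The plan is to exploit the fact that $\lambda$ solves (being convex in $\lambda$, actually minimizes) the obstacle problem for the functional $\mu \mapsto \int |\nabla \mu|^2 + \int \mu^2 |\nabla v|^2$ subject to $\mu \geq \lambda_0$, with the crucial structural observation that the ``potential'' $|\nabla v|^2$ has the \emph{good sign}: enlarging $\lambda$ makes the $\lambda^2|\nabla v|^2$-term worse, so $\lambda$ should not want to be large. Concretely, I would first derive the variational inequality: for all $\varphi \in H^1_0(\B^2)$ with $\varphi \geq 0$ (these are admissible one-sided perturbations since $\lambda + \eps\varphi \geq \lambda \geq \lambda_0$, and $(\lambda+\eps\varphi)v \in H^1$ because $\lambda$ is already known to be in $H^1$ and $v \in H^1 \cap L^\infty$),
\[
\int \nabla \lambda \cdot \nabla \varphi + \int \lambda\, |\nabla v|^2\, \varphi \;\geq\; 0.
\]
This says $-\Delta \lambda \geq -\lambda |\nabla v|^2$ in the distributional sense, i.e. $-\Delta \lambda + \lambda|\nabla v|^2 \geq 0$. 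The sign is the wrong way for an immediate maximum principle on $\lambda$ itself; but note $-\Delta\lambda \geq -\lambda|\nabla v|^2$ only gives a lower bound type inequality. The real point is the \emph{reverse}: I should instead test the minimality against the competitor $\min(\lambda, M)$ for a large constant $M$ (or against $\lambda - (\lambda - M)_+$), which is admissible downward only where it stays $\geq \lambda_0$, hence fine for $M \geq \sup \lambda_0$. Comparing $E(\lambda,v)$ with $E(\min(\lambda,M),v)$ and using that truncation strictly decreases $\int \lambda^2|\nabla v|^2$ on $\{\lambda > M\}$ while not increasing $\int|\nabla\lambda|^2$, minimality forces $\{\lambda > M\}$ to be null once $M$ is large enough — \emph{provided} we already control $\lambda$ in some integral sense to start the comparison. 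Since $\lambda \in H^1(\B^2) \hookrightarrow L^p$ for all $p<\infty$ in two dimensions, this truncation argument is legitimate.

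More carefully, I expect the cleanest route is a Moser/De Giorgi iteration on the variational inequality. Writing $w = (\lambda - k)_+$ for $k \geq k_0 := \sup_{\B^2}\lambda_0$, the function $\varphi = -w\,\eta^2$ (with $\eta$ a cutoff) is an admissible \emph{downward} perturbation on the set $\{\lambda > k\}$ since there $\lambda - \eps w \eta^2 \geq k_0 \geq \lambda_0$ for small $\eps$, and $(\lambda - \eps w\eta^2)v \in H^1$. Plugging this into the variational inequality yields
\[
\int \eta^2 |\nabla w|^2 \;\leq\; C\int |\nabla \eta|^2 w^2 \;+\; \int \lambda\, |\nabla v|^2\, w\, \eta^2,
\]
and on $\{w>0\}$ we have $\lambda = w + k$, so the last term is bounded by $\int |\nabla v|^2 (w^2 + k\,w)\eta^2$. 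Here $|\nabla v|^2 \in L^1(\B^2)$ only, which is exactly the borderline case: in dimension two, $L^1$ is the scaling-critical space for this kind of zeroth-order term, so a naive Hölder estimate fails. The standard fix is a small-energy / absolute-continuity argument: choose the ball small enough that $\int_{B_r} |\nabla v|^2 < \delta$; then by the Moser–Trudinger inequality (or the $H^1 \hookrightarrow \exp L^2$ embedding in 2D) applied to $w\eta$, the term $\int |\nabla v|^2 w^2 \eta^2$ can be absorbed into the left-hand side $\int \eta^2|\nabla w|^2$ up to constants, for $\delta$ small. The lower-order piece $k\int|\nabla v|^2 w\eta^2 \leq k \|w\eta\|_{L^2(\{w>0\})} \cdot \delta^{1/2}\|\,\cdot\,\|$ is handled similarly, again absorbed or moved to the right with a factor of $|\{w > k\} \cap B_r|$ carrying the decay. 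This produces the standard Caccioppoli-type inequality on small balls, and then the De Giorgi level-set iteration gives $\lambda \in L^\infty_{loc}$.

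The main obstacle, as flagged, is the \emph{criticality of $L^1$ for $|\nabla v|^2$ in two dimensions}: one cannot simply say ``$|\nabla v|^2 \in L^{q}$ for some $q > 1$'' to close the iteration by Hölder, because at this stage nothing better than $L^1$ is known about $\nabla v$ (higher integrability of $v$ comes only later in the paper, and would be circular to invoke here). So the proof must genuinely use the two-dimensional tools — either the Moser–Trudinger embedding to absorb $\int |\nabla v|^2 (w\eta)^2$ after localizing to a ball of small $\nabla v$-energy, or equivalently a Wente-type / Hélein-moving-frame style estimate; I would go with the former as it is the most elementary. A secondary technical point is checking admissibility of all the perturbations, i.e. that $(\lambda + \eps\varphi)v \in H^1(\B^2)$: since $\varphi$ is bounded with bounded gradient times $H^1$ functions and $v \in H^1 \cap L^\infty$, this follows from the product rule, but it must be verified because the hypothesis of Theorem~\ref{th:main2} only grants the variational inequality for such admissible $\varphi$. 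Once $\lambda \in L^\infty_{loc}$ is in hand, combined with $\lambda \geq \lambda_0 > 0$, the weighted harmonic map equation for $v$ becomes uniformly elliptic, which is what the subsequent sections exploit.
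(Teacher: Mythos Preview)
Your overall intuition is right --- the key structural point is exactly that the potential term has the \emph{good sign} --- but you then lose that sign in your Caccioppoli step, and the resulting complications (Moser--Trudinger, De~Giorgi iteration with a critical $L^1$-coefficient) are entirely self-inflicted. Concretely: with $\varphi=-w\eta^2$, $w=(\lambda-k)_+$, the variational inequality $\int\nabla\lambda\cdot\nabla\varphi+\int\lambda\varphi|\nabla v|^2\geq 0$ becomes
\[
\int\eta^2|\nabla w|^2+2\int w\eta\,\nabla w\cdot\nabla\eta+\int\lambda\,w\,\eta^2|\nabla v|^2\;\leq\;0,
\]
so after absorbing the cross term you get $\int\eta^2|\nabla w|^2+\int\lambda w\eta^2|\nabla v|^2\leq C\int w^2|\nabla\eta|^2$. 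The $|\nabla v|^2$-term sits on the \emph{left} with a nonnegative sign and can simply be discarded; there is nothing to absorb and no need for Moser--Trudinger. Your displayed inequality has it on the right, which is a sign error.

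The paper exploits this even more directly and avoids the cutoff (and hence any iteration) altogether. By Fubini and the one-dimensional embedding $H^1(\partial B_R)\hookrightarrow C^0$, one finds an inner circle on which $\lambda\leq K_1$; then for $K>K_1$ the truncation $(\lambda-K)_+$ already lies in $H^1_0(B_R)$. Testing with (a bounded approximation of) $\eta=(\lambda-K)_+$ in the inequality coming from the variation $\lambda_\eps=\lambda-\eps\eta(\lambda-K)$ yields three terms, \emph{each} of which is nonnegative while their sum is $\leq 0$: in particular $\int|\nabla\lambda|^2(\lambda-K)_+=0$, forcing $(\lambda-K)_+\equiv 0$. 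So the paper gets $\lambda\leq K$ in one shot, without any iteration; your truncation idea $\min(\lambda,M)$ was already essentially this, and the obstruction you flagged (``provided we already control $\lambda$ in some integral sense'') is not an integrability issue but a boundary-value issue, resolved by the Fubini trace trick.
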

\begin{proof}
We will show that $\lambda \in L^\infty(B(0,r))$ for any $r \in (0,1)$. Fix such an $r$. By Fubini's theorem, there must be $R \in (r,1)$ such that 
\[
 \|\lambda \|_{H^1(\partial B(0,R))} \aleq \frac{1}{1-r} \|\lambda \|_{H^1(B(0,1))}. 
\]
Since $\partial B(0,r)$ is one-dimensional we have that $H^1(\partial B(0,R))$ embeds in particular into $C^0(\partial B(0,R))$. For simplicity of notation we shall pretend that $R = 1$ and thus assume w.l.o.g.
\[
 K_1 := \|\lambda \|_{L^\infty(\partial B(0,1))} < \infty.
\]
Let $\eta \in C_c^\infty(\B^2,\R_+)$ and let $K > K_1$.
Then for small $\eps > 0$ the following variation of $\lambda$ is admissible
\[
\lambda_\eps := \lambda - \eps \eta (\lambda-K) 
\]
Indeed, by convexity, whenever $\eps \|\eta\|_{\infty} < 1$,
\[
\lambda_\eps = (1-\eps \eta) \lambda + \eps \eta\, K \geq 1 \quad \mbox{a.e. in $\B^2$}.
\]
In particular, the Euler-Lagrange inequality for $\lambda$ implies
\[
\frac{d}{d\eps } \Big|_{\eps  = 0^+} E\left (\lambda_\eps,v \right ) \geq 0,
\]
that is 
\begin{equation}\label{eq:lambdael}
\int_{\B^2} |\nabla \lambda|^2 \eta + \int_{\B^2} (\lambda-K)\nabla \lambda \cdot  \nabla \eta+ \int_{\B^2} \lambda\, \eta\, (\lambda-K)\, |\nabla v|^2 \leq 0.
\end{equation}
We would like to test this inequality with $\eta := (\lambda - K)_+$ Then $\eta \in H^1_0(\B^2)$ -- the zero boundary data stems from the choice of $K \geq K_1$. Moreover,
\[
\nabla \eta = \chi_{\{\lambda \geq K\}}\, \nabla \lambda.
\]
Cf. \cite[Chapter 5, Problem 18, p.308]{Evans}. However $\eta$ may not be bounded, ant the resulting integrals may not converge. So instead for arbitrary $k > K$ we test with
\[
\eta_k := -(\eta - k)_- + k= \min \{\eta-k,0\} + k \in [0,k]
\]
In other words,
\[
\eta_k = \begin{cases}
k \quad &\mbox{in $\{\lambda > K+k\}$}\\
\lambda-K \quad &\mbox{in $\{\lambda < K+k\} \cap \{\lambda > K\}$}\\
0 \quad &\mbox{in $\{\lambda < K\}$}\\
\end{cases}
\]
Now we have $\eta_k \in H^1_0 \cap L^\infty(\B^2,\R_+)$, and consequently, $\eta_k$ is admissible as testfunction in \eqref{eq:lambdael}. Moreover,
\begin{equation}\label{eq:netak}
\nabla \eta_k = \chi_{\{\lambda \in (K,K+k)\}}\, \nabla \lambda.
\end{equation}
We observe that $(K-\lambda) \eta_k \geq 0$ and thus
\[
 \int_{\B^2} \lambda\, \eta\, (K-\lambda)\, |\nabla v|^2 \geq 0.
\]
Moreover, in view of \eqref{eq:netak},
\[
\int_{\B^2} (\lambda-K)\nabla \lambda \cdot  \nabla \eta = \int_{\B^2} (\lambda-K) \chi_{\lambda \in (K,k)} |\nabla \lambda|^2 \geq 0.
\]
Consequently, \eqref{eq:lambdael} implies 
\[
\int_{\B^2} |\nabla \lambda|^2 \eta_k\leq 0,
\]
that is, since $\eta_k \geq 0$,
\[
|\nabla \lambda|^2 \eta_k \equiv 0.
\]
\[
\int_{\B^2} |\nabla \lambda|^2 (\lambda-K)_+ 
\leq 0
\]
This implies
\[
|\nabla \lambda|^2(\lambda-K)_+ \equiv 0,
\]
that is
\[
 |\nabla \brac{(\lambda-K)_+}^2| \equiv 0,
\]
But in view of \eqref{eq:netak} this implies
\[
|\nabla (\eta_k)^2| \equiv 0,
\]
which in turn gives $\eta_k \equiv 0$ (since $\eta_k \in H^1_0(\B^2)$. In particular $\lambda \leq K$ almost everywhere, i.e. $\lambda$ is bounded (recall that $\lambda \geq \lambda_0 \in L^\infty(\B)$ was assumed).
\end{proof}

\section{The Euler-Lagrange equations for \texorpdfstring{$v$}{v}}\label{s:vel}
Now that $\lambda$ is bounded, we start with computing the Euler-Lagrange equations for $v$, which are a weighted version of the spherical harmonic map equation. In particular we obtain a weighted version of Shatah's conservation law \cite{Shatah-1988}, that H\'elein used in \cite{Helein90} to obtain regularity for harmonic maps into spheres. 
\begin{lemma}[Euler-Lagrange equations]\label{la:vel}
Let $\lambda$ and $v$ be as in Theorem~\ref{th:main2}. Then,
\begin{equation}\label{eq:vel}
 \div (\lambda^2 \nabla v^i) = \Omega_{ij}\cdot \lambda^2 \nabla v^j
\end{equation}
with
\[
 \Omega_{ij} = v^j \nabla v^i - v^i \nabla v^j.
\]
Equivalently we also have a weighted version of Shatah's conservation law \cite{Shatah-1988}
\begin{equation}\label{eq:shatah}
 \div (\lambda^2 \Omega_{ij}) = 0.
\end{equation}
\end{lemma}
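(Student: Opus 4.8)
The plan is to derive the two displayed identities directly from the variational hypothesis on $v$ stated in Theorem~\ref{th:main2}, exploiting the constraint $|v|=1$ and the fact that $\lambda$ plays only the role of a fixed (bounded, measurable, $H^1$) weight. First I would compute $\frac{d}{d\eps}\big|_{\eps=0} E(\lambda, \frac{v+\eps\psi}{|v+\eps\psi|})$ for arbitrary $\psi \in C_c^\infty(\B^2,\R^N)$. Writing $v_\eps = (v+\eps\psi)/|v+\eps\psi|$, one has $\partial_\eps|_{\eps=0} v_\eps = \psi - \langle v,\psi\rangle v =: \psi^T$, the tangential projection of $\psi$ onto $T_v\S^{N-1}$. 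Since $\lambda$ is untouched by this variation, only the $\int \lambda^2|\nabla v|^2$ term contributes, and the first variation is $2\int \lambda^2 \langle \nabla v, \nabla \psi^T\rangle = 0$. The routine but slightly delicate point here is justifying differentiation under the integral: this uses $\lambda \in L^\infty_{\loc}$ (Proposition~\ref{pr:lambdabd}), $|\nabla v|^2 \in L^1$, and $\psi$ compactly supported, so one works on a fixed compact set where $\lambda$ is bounded and the difference quotients are dominated.

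Next I would unpack the weak equation $\int \lambda^2\langle\nabla v,\nabla(\psi - \langle v,\psi\rangle v)\rangle = 0$. Expanding $\nabla(\langle v,\psi\rangle v) = \langle\nabla v,\psi\rangle v + \langle v,\nabla\psi\rangle v + \langle v,\psi\rangle\nabla v$ and using the pointwise relations $\langle v,\nabla v\rangle = 0$ and $|v|^2=1$, several terms collapse: $\langle \nabla v, \langle v,\psi\rangle \nabla v\rangle = \langle v,\psi\rangle|\nabla v|^2$, $\langle\nabla v, \langle v,\nabla\psi\rangle v\rangle = 0$, and $\langle\nabla v,\langle\nabla v,\psi\rangle v\rangle = 0$. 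This yields, componentwise, the weak form of $\div(\lambda^2\nabla v^i) = \lambda^2|\nabla v|^2\, v^i - (\text{cross terms})$, which one rewrites using $\Omega_{ij} = v^j\nabla v^i - v^i\nabla v^j$. Concretely, the key algebraic identity is $\Omega_{ij}\cdot\lambda^2\nabla v^j = \lambda^2 v^j\langle\nabla v^i,\nabla v^j\rangle - \lambda^2 v^i\langle\nabla v^j,\nabla v^j\rangle$, and the first sum vanishes after differentiating $|v|^2=1$ twice (i.e. $\sum_j v^j\nabla v^j = 0$ gives $\sum_j v^j \Delta v^j = -|\nabla v|^2$ heuristically, but the clean way is to test with $\psi = e_i$ localized and track the tangential projection). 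I would present this as: for any scalar test function $\phi \in C_c^\infty$, choosing $\psi = \phi\, e_i$ (then tangentially projecting) gives exactly \eqref{eq:vel} in the distributional sense.

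For the conservation law \eqref{eq:shatah}, I would argue algebraically from \eqref{eq:vel}: compute
\[
\div(\lambda^2\Omega_{ij}) = \div(\lambda^2 v^j\nabla v^i) - \div(\lambda^2 v^i\nabla v^j) = v^j\div(\lambda^2\nabla v^i) + \lambda^2\nabla v^j\cdot\nabla v^i - v^i\div(\lambda^2\nabla v^j) - \lambda^2\nabla v^i\cdot\nabla v^j.
\]
The two gradient-pairing terms cancel, and substituting \eqref{eq:vel} leaves $v^j(\Omega_{ik}\cdot\lambda^2\nabla v^k) - v^i(\Omega_{jk}\cdot\lambda^2\nabla v^k)$, which expands to $\lambda^2\nabla v^k\big(v^j v^k\nabla v^i - v^j v^i\nabla v^k - v^i v^k\nabla v^j + v^i v^j\nabla v^k\big) = \lambda^2\nabla v^k\big(v^j v^k\nabla v^i - v^i v^k\nabla v^j\big)$, and summing over $k$ with $\sum_k v^k\nabla v^k = 0$ kills both terms. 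Hence $\div(\lambda^2\Omega_{ij})=0$. I should be careful that all these manipulations are a priori only formal since $v \in H^1$ and $\lambda^2\nabla v \in L^2$, so products like $v^j\div(\lambda^2\nabla v^i)$ need interpreting — the honest route is to do everything at the level of weak formulations against test functions, where $v^j\phi$ is a legitimate ($H^1\cap L^\infty$) test function in \eqref{eq:vel}, and the cancellations happen inside the integrals.

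The main obstacle I anticipate is purely technical: making the chain rule for $v_\eps = (v+\eps\psi)/|v+\eps\psi|$ and the subsequent integrations by parts rigorous in the low-regularity setting, in particular verifying that $v^j\phi$ and similar products are admissible test functions and that no boundary terms appear (they do not, since $\psi$ and $\phi$ are compactly supported). The algebra leading from the first variation to \eqref{eq:vel} and then to \eqref{eq:shatah} is the same computation as in H\'elein's classical argument for harmonic maps into spheres \cite{Helein90}, merely carrying the extra weight $\lambda^2$ along, so conceptually there is no new difficulty there — the weight is inert under the $v$-variation.
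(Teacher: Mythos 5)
Your proposal is correct and follows essentially the same route as the paper: compute the first variation using $\partial_\eps|_{\eps=0}\,(v+\eps\psi)/|v+\eps\psi| = \psi - \langle v,\psi\rangle v$, simplify with $\langle v,\nabla v\rangle \equiv 0$ to get $\div(\lambda^2\nabla v^i) = -\lambda^2 v^i|\nabla v|^2$ in weak form, and rewrite via the antisymmetric $\Omega_{ij}$ using $\sum_j v^j\nabla v^j = 0$. Your explicit expansion for $\div(\lambda^2\Omega_{ij})$, carried out weakly against $v^j\phi \in H^1\cap L^\infty$, is exactly the ``direct computation'' the paper leaves unwritten, so no discrepancy remains.
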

\begin{proof}
Since $|v| \equiv 1$ we have
\[
 \frac{d}{d\eps} \Big |_{\eps = 0} \frac{v+\eps \psi}{|v+\eps \psi|} = \psi - \langle \psi, v \rangle v,
\]
and consequently, the Euler-Lagrange equations with respect to $v$, can be written as
\[
 \int_{\B^2} \lambda^2\, \nabla v \cdot \nabla \psi = \int_{\B^2} \lambda^2\, \nabla v \cdot \nabla (\langle \psi,v\rangle v)
\]
Now, $v \cdot \nabla v  \equiv 0$ since $|v| \equiv 1$, so
\[
 \nabla v \cdot \nabla (\langle \psi,v\rangle v) = |\nabla v|^2 \langle \psi, v \rangle.
\]
We thus obtain the Euler-Lagrange equation
\[
 \div (\lambda^2 \nabla v^i) = \lambda^2 v^i |\nabla v|^2.
\]
Now we rewrite this equation (using again $v^i \nabla v^i = \frac{1}{2} \nabla |v|^2 \equiv 0$), with he following trick
\[
 v^i |\nabla v|^2 = v^i\, \nabla v^j \cdot \nabla v^j = \brac{v^i\, \nabla v^j \cdot \nabla v^j - v^j\, \nabla v^i} \cdot \nabla v^j
\]
This establishes \eqref{eq:vel}. The conservation law \eqref{eq:shatah} follows now from a direct computation.
\end{proof}

\section{Uniform a priori estimates for critical equations with elliptic \texorpdfstring{$W^{1,2}$}{W12}-coefficients}\label{s:uniformlp}

\begin{proposition}\label{pr:vpg2uniformmorrey2}
Let $2 < p_0 < p_\infty < \infty$ and $\Lambda > 1$. Then there exists a constant $C = C(\Lambda,p_0, p_\infty)$, a small $\eps = \eps(\Lambda,p_0,p_\infty) > 0$ and a small $\alpha = \alpha(\Lambda,p_0,p_\infty) > 0$ so that the following holds.

Let either $p=2$ or $p \in (p_0,p_\infty)$ and $R > 0$. Let 
$v \in W^{1,2}(\B^2,\R^N)$ 
be a solution to
\begin{equation}\label{eq:mrpde}
\div (\lambda^2 \nabla v^i) = \Omega_{ij}\cdot \lambda^2 \nabla v^j \quad \mbox{in $\B^2$}
\end{equation}
where 
$\lambda \in L^\infty \cap W^{1,2}(\B^2)$ satisfies
\begin{equation}\label{eq:lambdape}
\Lambda^{-1} \leq \lambda \leq \Lambda \quad \mbox{almost everywhere in $\B^2$}
\end{equation}
and $\Omega_{ij} \in L^2_{loc}(\B^2,\R^2)$ satisfies
\begin{equation}\label{eq:omegape}
|\Omega| \leq \Lambda |\nabla v| \quad \mbox{almost everywhere in $\B^2$}.
\end{equation}
If $p = 2$ we assume moreover 
\begin{equation}\label{eq:shatah2}
 \div (\lambda^2 \Omega_{ij}) = 0 \quad \mbox{in $\B^2$.}
\end{equation}
Then, if $\nabla v \in L^p_{loc}(\B^2)$ then for any $r < R$ the estimate
\[
\|\nabla v\|_{L^p(B(r))} \leq C\, \brac{\frac{r}{R}}^{\frac{\alpha}{p}} \|\nabla v\|_{L^p(B(R))}
\]
holds for all balls $B(R) \subset \B^2$ on which $v$ and $\lambda$ satisfy
\[
\|\nabla \lambda\|_{L^2(B(R))} + \|\nabla v\|_{L^2(B(R))} \leq \eps.
\]
\end{proposition}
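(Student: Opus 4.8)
The plan is to run a decay-of-energy (hole-filling/Morrey-type) argument for both the $L^2$-norm and the $L^p$-norm of $\nabla v$ on small balls, exploiting the smallness hypothesis $\|\nabla\lambda\|_{L^2(B(R))}+\|\nabla v\|_{L^2(B(R))}\le\eps$ to close the estimates. The natural mechanism is to compare $v$ on a ball $B(\rho)\subset B(R)$ with the $\lambda^2$-harmonic replacement, i.e.\ the solution $h$ of $\div(\lambda^2\nabla h)=0$ in $B(\rho)$ with $h=v$ on $\partial B(\rho)$. Since $\lambda$ satisfies the two-sided bound \eqref{eq:lambdape}, $h$ enjoys the usual good decay estimates for a uniformly elliptic divergence-form equation with merely bounded measurable coefficients: namely $\int_{B(\sigma\rho)}|\nabla h|^2\aleq \sigma^{2\beta}\int_{B(\rho)}|\nabla h|^2$ for some De~Giorgi--Nash exponent $\beta=\beta(\Lambda)>0$, and, after a reverse-H\"older/Gehring step, a corresponding $L^p$ version for $p$ slightly above $2$. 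Writing $w:=v-h$, $w$ solves $\div(\lambda^2\nabla w)=\Omega_{ij}\cdot\lambda^2\nabla v^j$ with zero boundary data, so testing with $w$ itself and using \eqref{eq:omegape}, $\|\lambda\|_\infty\le\Lambda$, and the two-dimensional Sobolev embedding $W^{1,2}_0(B(\rho))\hookrightarrow L^q$ gives
\[
\int_{B(\rho)}|\nabla w|^2 \aleq \Lambda^4\,\|\nabla v\|_{L^2(B(\rho))}^2\;\int_{B(\rho)}|\nabla v|^2,
\]
which is small by hypothesis; this is the standard $\eps$-regularity estimate. Combining the decay of $\nabla h$ with the smallness of $\nabla w$ yields an inequality of the form $\phi(\sigma\rho)\le (C\sigma^{2\beta}+C\eps)\,\phi(\rho)$ for $\phi(\rho):=\int_{B(\rho)}|\nabla v|^2$; choosing $\sigma$ and then $\eps$ small produces geometric decay $\phi(r)\aleq (r/R)^{\alpha}\phi(R)$, i.e.\ the $p=2$ case.

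For $p>2$ one iterates the same comparison but now measures $\nabla w$ in $L^p$. Here the point is that $w$ solves an equation whose right-hand side is $f:=\Omega_{ij}\cdot\lambda^2\nabla v^j$ with $|f|\aleq \Lambda^3|\nabla v|^2$, so by the product rule / Sobolev--Poincar\'e in two dimensions $f\in L^{p/2}$ with $\|f\|_{L^{p/2}(B(\rho))}\aleq \|\nabla v\|_{L^2(B(\rho))}\|\nabla v\|_{L^p(B(\rho))}$, and the Calder\'on--Zygmund estimate for the uniformly elliptic operator $\div(\lambda^2\nabla\cdot)$ on the ball (valid on the range $(p_0,p_\infty)$ after absorbing the Gehring gain, which is where the constant's dependence on $p_0,p_\infty$ enters) gives $\|\nabla w\|_{L^p(B(\rho))}\aleq \|\nabla v\|_{L^2(B(\rho))}\|\nabla v\|_{L^p(B(\rho))}\aleq \eps\|\nabla v\|_{L^p(B(\rho))}$. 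Together with the $L^p$-decay of the harmonic replacement $h$, the same Morrey iteration as before closes, now for $\phi_p(\rho):=\int_{B(\rho)}|\nabla v|^p$, and yields $\|\nabla v\|_{L^p(B(r))}\aleq (r/R)^{\alpha/p}\|\nabla v\|_{L^p(B(R))}$.

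I expect the main obstacle to be obtaining the Calder\'on--Zygmund / reverse-H\"older estimates for $\div(\lambda^2\nabla\cdot)$ with $\Lambda$- and $p$-explicit constants that are \emph{uniform} in the ball and stable under the interpolation between $p=2$ and $p\in(p_0,p_\infty)$; in particular one must be careful that the Gehring self-improvement exponent and the admissible CZ range $(p_0,p_\infty)$ depend only on $\Lambda$, not on the particular $v$ or $\lambda$. The secondary subtlety is the role of Shatah's conservation law \eqref{eq:shatah2}: it is needed \emph{only} in the borderline case $p=2$, where the naive $L^1$ right-hand side $|\nabla v|^2$ just fails to give decay, and one instead rewrites $\Omega_{ij}\cdot\lambda^2\nabla v^j$ using $\div(\lambda^2\Omega_{ij})=0$ to exhibit a Jacobian/div-curl structure, applying Wente's inequality (or a weighted variant compatible with the bounded coefficient $\lambda^2$) to gain the extra integrability that powers the hole-filling at $p=2$. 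Handling the boundary data of the comparison maps on generic balls $B(R)\subset\B^2$ (as opposed to concentric balls) is routine once one fixes a center and works with concentric sub-balls, so I would set it up that way from the start.
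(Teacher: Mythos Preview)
Your approach differs from the paper's in an important structural way, and the difference creates a real obstacle in the $p>2$ regime that you have correctly anticipated but not resolved.

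The paper does \emph{not} compare with the $\lambda^2$-harmonic replacement. Instead it Hodge-decomposes the vector field $\lambda^2\nabla v$ (not $\nabla v$) on $B(R)$:
\[
\lambda^2\nabla v=\nabla a+\nabla^\perp b+H,
\]
where $a,b$ solve Dirichlet problems for the \emph{flat} Laplacian, $\lap a=\div(\lambda^2\nabla v)=\Omega_{ij}\cdot\lambda^2\nabla v^j$ and $\lap b=\curl(\lambda^2\nabla v)=\nabla^\perp(\lambda^2)\cdot\nabla v$, and $H$ is harmonic. Because the comparison operator is $\lap$, standard Calder\'on--Zygmund gives $\|\nabla a\|_{L^p}+\|\nabla b\|_{L^p}\aleq(\|\nabla v\|_{L^2}+\|\nabla\lambda\|_{L^2})\|\nabla v\|_{L^p}$ for every $p\in(p_0,p_\infty)$ with constants depending only on $p_0,p_\infty$; for $p=2$ both right-hand sides already have Jacobian structure (via \eqref{eq:shatah2} for $a$, and manifestly for $b$), so the classical Wente lemma applies directly. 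The harmonic part $H$ carries the decay $\|H\|_{L^p(B(r))}\aleq(r/R)^{2/p}\|H\|_{L^p(B(R))}$, and iteration closes.

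In your scheme the analogues of these estimates are a CZ/$L^p$-theory for $\div(\lambda^2\nabla\,\cdot\,)$ and an $L^p$-decay for the $\lambda^2$-harmonic replacement $h$. With merely bounded measurable coefficients neither is available: Gehring self-improvement gives only a small window $p\in(2-\delta,2+\delta)$ with $\delta=\delta(\Lambda)$, whereas the proposition requires the full range $(p_0,p_\infty)$ for arbitrary finite $p_\infty$; and solutions of $\div(\lambda^2\nabla h)=0$ with bounded measurable coefficients need not even have $\nabla h\in L^p_{\loc}$ for large $p$, let alone quantitative decay. You could try to rescue this by exploiting $\lambda\in W^{1,2}$ in two dimensions (hence VMO) together with the smallness $\|\nabla\lambda\|_{L^2(B(R))}\le\eps$ and invoking CZ theory for VMO coefficients, but that is substantially heavier machinery and the constants then depend on the VMO modulus rather than just on $\Lambda$. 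The paper's trick of Hodge-decomposing $\lambda^2\nabla v$ is precisely the device that sidesteps this: it pushes all the $\lambda$-dependence into the \emph{right-hand sides} of flat-Laplacian equations, where it enters only through $\|\nabla\lambda\|_{L^2}$ and $\|\lambda\|_{L^\infty}$. (Your inline $p=2$ estimate obtained by testing with $w$ is also incorrect as written---$|\nabla v|^2\in L^1$ against $w\in W^{1,2}_0$ in two dimensions is exactly the critical case that does not close---but you correctly flag this yourself a few lines later and point to Wente; just note that once you follow the paper's decomposition the required Wente step is for the standard Laplacian, not a ``weighted variant''.)
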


An important ingredient for the $p=2$ case is Wente's Lemma see \cite{Reshetnyak-1968,Wente69,BrC84,Tartar84,Mueller90,CLMS,T97}.
\begin{lemma}[Wente Lemma]\label{la:wente}
Let $B \subset \R^2$ be a ball, and $a, b \in W^{1,2}(B)$. If $w \in W^{1,2}(B(R))$ is a solution to
\[
\begin{cases}
\lap w = \nabla a \cdot \nabla^\perp b \quad &\mbox{in $B$}\\
w = 0 \quad& \mbox{on $\partial B$},
\end{cases}
\]
where $\nabla^\perp = (-\partial_2,\partial_1)^T$, then
\[
\|w\|_{L^\infty(B)} + \|\nabla w\|_{L^2(B)} \leq \|\nabla a\|_{L^2(B)}\, \|\nabla b\|_{L^2(B)}.
\]
\end{lemma}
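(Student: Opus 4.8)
The plan is to reduce the estimate on the ball to the classical whole-plane Wente inequality by a harmonic correction, and to obtain the whole-plane inequality by an explicit polar-coordinate integration by parts. The only genuinely delicate point is the $L^\infty$ bound: this is where the Jacobian structure of the right-hand side — rather than its mere membership in $L^1$ — must be used.

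\textbf{Step 1: reduction to $\R^2$.} Since $\nabla a\cdot\nabla^\perp b$ is unchanged when a constant is added to $a$ or to $b$, I would first subtract averages over $B$ and invoke Poincar\'e to get $\|a\|_{W^{1,2}(B)}\aleq\|\nabla a\|_{L^2(B)}$ and likewise for $b$; a Sobolev extension from the ball (with operator norm independent of the radius, by scaling) then yields $\tilde a,\tilde b\in W^{1,2}(\R^2)$ coinciding with $a,b$ on $B$ and satisfying $\|\nabla\tilde a\|_{L^2(\R^2)}\aleq\|\nabla a\|_{L^2(B)}$, $\|\nabla\tilde b\|_{L^2(\R^2)}\aleq\|\nabla b\|_{L^2(B)}$. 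Let $\tilde w:=E*(\nabla\tilde a\cdot\nabla^\perp\tilde b)$ be the Newtonian potential, $E(x)=\tfrac1{2\pi}\log|x|$, so that $\lap\tilde w=\nabla\tilde a\cdot\nabla^\perp\tilde b$ on $\R^2$; then $h:=w-\tilde w$ is harmonic in $B$ (the right-hand sides agree there) and has boundary values $-\tilde w|_{\partial B}$.

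\textbf{Step 2: the $L^\infty$ bound on the plane.} By density it suffices to treat $\tilde a,\tilde b\in C_c^\infty$. Placing the base point at the origin and writing the potential in polar coordinates centred there,
\[
\tilde w(0)=\tfrac1{2\pi}\int_0^\infty\int_0^{2\pi}\log r\,\big(\partial_r\tilde a\,\partial_\theta\tilde b-\partial_\theta\tilde a\,\partial_r\tilde b\big)\,d\theta\,dr,
\]
one integrates by parts in $\theta$ to collapse the inner integral to $\tfrac{d}{dr}\phi(r)$ with $\phi(r):=\int_0^{2\pi}\tilde a\,\partial_\theta\tilde b\,d\theta$, and then by parts in $r$ to obtain $\tilde w(0)=-\tfrac1{2\pi}\int_0^\infty r^{-1}\phi(r)\,dr$. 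Since $\int_0^{2\pi}\partial_\theta\tilde b\,d\theta=0$ one may center $\tilde a(r,\cdot)$ at each radius, so Wirtinger's inequality on $S^1$ gives $|\phi(r)|\le\|\partial_\theta\tilde a(r,\cdot)\|_{L^2(S^1)}\|\partial_\theta\tilde b(r,\cdot)\|_{L^2(S^1)}$; Cauchy--Schwarz in $r$ together with $\int_{\R^2}r^{-2}|\partial_\theta\tilde a|^2\le\|\nabla\tilde a\|_{L^2}^2$ yields $|\tilde w(0)|\le\tfrac1{2\pi}\|\nabla\tilde a\|_{L^2}\|\nabla\tilde b\|_{L^2}$, hence $\|\tilde w\|_{L^\infty(\R^2)}\le\tfrac1{2\pi}\|\nabla\tilde a\|_{L^2}\|\nabla\tilde b\|_{L^2}$. (Alternatively one may quote \cite{CLMS,T97}, that $\nabla\tilde a\cdot\nabla^\perp\tilde b\in\mathcal H^1(\R^2)$ with norm $\aleq\|\nabla\tilde a\|_{L^2}\|\nabla\tilde b\|_{L^2}$, and conclude by $\mathcal H^1$--$BMO$ duality.)

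\textbf{Step 3: gradient bound and transplantation.} Testing $\lap\tilde w=\nabla\tilde a\cdot\nabla^\perp\tilde b$ with $\tilde w$ itself and using $\|\nabla\tilde a\cdot\nabla^\perp\tilde b\|_{L^1}\le\|\nabla\tilde a\|_{L^2}\|\nabla\tilde b\|_{L^2}$ gives
\[
\|\nabla\tilde w\|_{L^2(\R^2)}^2=-\int_{\R^2}\tilde w\,(\nabla\tilde a\cdot\nabla^\perp\tilde b)\le\|\tilde w\|_{L^\infty(\R^2)}\,\|\nabla\tilde a\cdot\nabla^\perp\tilde b\|_{L^1}\le\tfrac1{2\pi}\|\nabla\tilde a\|_{L^2}^2\|\nabla\tilde b\|_{L^2}^2.
\]
Back on the ball, the maximum principle gives $\|h\|_{L^\infty(B)}\le\|\tilde w\|_{L^\infty(\partial B)}\le\|\tilde w\|_{L^\infty(\R^2)}$, and since the harmonic extension minimises the Dirichlet energy among $W^{1,2}(B)$-functions with its boundary trace (and $-\tilde w|_B$ is such a competitor) also $\|\nabla h\|_{L^2(B)}\le\|\nabla\tilde w\|_{L^2(B)}$; adding $w=\tilde w+h$ and feeding in Step 1 yields $\|w\|_{L^\infty(B)}+\|\nabla w\|_{L^2(B)}\le C\,\|\nabla a\|_{L^2(B)}\|\nabla b\|_{L^2(B)}$, with an explicit constant $C$ that can be tracked through the estimates (only its finiteness is used later). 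The main obstacle is Step 2: for a generic $L^1$ right-hand side the Newtonian potential in two dimensions carries a logarithmic singularity and need not even be bounded, so the argument must exploit the divergence/Jacobian form of $\nabla a\cdot\nabla^\perp b$ — here via the $\theta$- and $r$-integrations by parts and Wirtinger's inequality.
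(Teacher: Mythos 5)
Your argument is correct, and it is worth noting that the paper does not prove this lemma at all --- it is quoted from the literature (\cite{Wente69,BrC84,Tartar84,CLMS,T97}). What you have written is essentially a self-contained rendition of the classical Wente/Brezis--Coron proof: the polar-coordinate double integration by parts, the use of $\int_0^{2\pi}\partial_\theta \tilde b\,d\theta=0$ together with Wirtinger's inequality on $\S^1$, and Cauchy--Schwarz in $r$ with $\int_{\R^2} r^{-2}|\partial_\theta \tilde a|^2\leq \|\nabla \tilde a\|_{L^2}^2$ is exactly Wente's original mechanism, and your remark that the Jacobian structure (not mere $L^1$ membership) is what makes the $L^\infty$ bound possible is the right point to emphasize; the $\H^1$--$BMO$ route you mention corresponds to the \cite{CLMS} citation. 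The one structural difference from the standard disk-based proofs is your reduction: instead of working with the Green function of $B$ you extend $a,b$ to $\R^2$ (after subtracting means, so that the homogeneous extension bound is radius-independent by the scale invariance of $\|\nabla\cdot\|_{L^2}$ in two dimensions), solve on the plane, and correct by a harmonic function controlled via the maximum principle and the Dirichlet principle. This is a clean variant; its only cost is that the final constant is no longer the sharp one of the statement (your $C$ includes the extension norm), which is harmless here since the lemma is only used with an unspecified constant. The remaining technicalities --- density of $C_c^\infty$ in $\dot W^{1,2}$ modulo constants, the vanishing of the boundary terms $[\log r\,\phi(r)]$ for compactly supported smooth data, the decay of $\tilde w$ at infinity needed to justify testing with $\tilde w$ (using $\int \nabla\tilde a\cdot\nabla^\perp\tilde b=0$ because the integrand is a divergence), and Weyl's lemma for $h=w-\tilde w$ --- are routine and you have either flagged them or they are standard; no genuine gap.
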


The proof of Proposition~\ref{pr:vpg2uniformmorrey2} is based on the following estimate.
\begin{lemma}\label{la:vpg2uniformmorrey1}
Let $2 < p_0 < p_\infty < \infty$ and $\Lambda > 1$. Then there exists a constant $C = C(\Lambda,p_0, p_\infty)$ so that the following holds.

Let either $p=2$ or $p \in (p_0,p_\infty)$ and $R > 0$. Let 
$v \in W^{1,2}(B(R),\R^N)$ 
be a solution to \eqref{eq:mrpde} in $B(R)$, where 
$\lambda \in L^\infty \cap W^{1,2}(B(R))$ satisfies \eqref{eq:lambdape} in $B(R)$ and $\Omega_{ij} \in L^2_{loc}(B(R),\R^2)$ satisfies \eqref{eq:omegape}. If $p = 2$ we assume moreover \eqref{eq:shatah2} to hold in $B(R)$. 

Then, if $\nabla v \in L^p(B(R))$ we have the following a priori estimate for any $r \in (0,R]$
\[
\|\nabla v\|_{L^p(B(r))} \leq C\, \brac{\brac{\frac{r}{R}}^{\frac{2}{p}} + \|\nabla \lambda\|_{L^2(B(R))} + \|\nabla v\|_{L^2(B(R))}}\, \|\nabla v\|_{L^p(B(R))}.
\]
\end{lemma}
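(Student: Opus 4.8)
The plan is to freeze the leading coefficient of the divergence-form operator on the ball $B(R)$ and compare $v$ with the solution of a constant-coefficient (essentially harmonic) equation, exploiting the fact that harmonic functions decay optimally at smaller scales. More precisely, I would write $a := \lambda^2$, which by \eqref{eq:lambdape} satisfies $\Lambda^{-2} \leq a \leq \Lambda^2$, and decompose $v = h + g$ on $B(R)$ where $h$ is $a$-harmonic, $\div(a\nabla h) = 0$ in $B(R)$ with $h = v$ on $\partial B(R)$, and $g \in W^{1,p}_0(B(R))$ is the error. For $h$ one has the standard reverse-Hölder / Campanato decay estimate $\|\nabla h\|_{L^p(B(r))} \aleq (r/R)^{2/p}\|\nabla h\|_{L^p(B(R))}$ for $r \leq R$; since $a \in W^{1,2}$ in two dimensions it is VMO, so the constant-coefficient decay is inherited by $a$-harmonic functions up to a factor that is uniform in the data (depending only on $\Lambda, p_0, p_\infty$). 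It then remains to estimate $\|\nabla g\|_{L^p(B(R))}$ in terms of the right-hand side $\Omega_{ij}\cdot\lambda^2\nabla v^j$, which by \eqref{eq:omegape} is controlled pointwise by $\Lambda^3 |\nabla v|^2$.

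For the error term the two cases $p > 2$ and $p = 2$ are handled differently. When $p \in (p_0, p_\infty)$, $g$ solves $\div(a\nabla g^i) = \Omega_{ij}\cdot a\nabla v^j$ with zero boundary data, and the right-hand side lies in $L^{p/2}$ with $\|\,|\nabla v|^2\|_{L^{p/2}(B(R))} = \|\nabla v\|_{L^p(B(R))}^2$; Calderón–Zygmund theory for divergence-form operators with VMO coefficients (again, the $W^{1,2}$ bound on $a$ gives a uniform modulus of continuity in dimension two) yields $\|\nabla g\|_{L^p(B(R))} \aleq \|\,|\nabla v|^2\|_{L^{p/2}(B(R))} \cdot (\text{something}) $, and one turns the quadratic term $\|\nabla v\|_{L^p}^2$ into the claimed linear form by factoring out one $\|\nabla v\|_{L^p(B(R))}$ and estimating the remaining factor by $\|\nabla v\|_{L^2(B(R))}$ via an interpolation/Hölder bound $\|\nabla v\|_{L^p} \leq C\|\nabla v\|_{L^2}^\theta \|\nabla v\|_{L^{p_\infty}}^{1-\theta}$ — or, more cleanly, by using a localized Gagliardo–Nirenberg inequality that lets one absorb the higher norm; the point is that the smallness of $\|\nabla v\|_{L^2(B(R))}$ will later make this term a genuine perturbation. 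When $p = 2$, Calderón–Zygmund alone is too weak (an $L^1$ right-hand side gives only weak-$L^1$ gradients), and this is where the extra structural assumption \eqref{eq:shatah2} enters: writing $\div(\lambda^2\Omega_{ij}) = 0$ in two dimensions means $\lambda^2\Omega_{ij} = \nabla^\perp B_{ij}$ for some $B_{ij} \in W^{1,2}$, so the equation for $g$ becomes, after dividing by $a$ and commuting, a sum of Jacobian-type terms $\nabla B_{ij}\cdot\nabla^\perp(\text{stuff})$ plus lower-order commutator terms coming from $\nabla a / a \in L^2$; Wente's Lemma~\ref{la:wente} then gives $\|\nabla g\|_{L^2(B(R))} \aleq \|\nabla B\|_{L^2(B(R))}\|\nabla v\|_{L^2(B(R))} \aleq \Lambda^2\|\nabla v\|_{L^2(B(R))}^2$, which again is of the desired form with one factor absorbed as smallness.

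Combining the two pieces, $\|\nabla v\|_{L^p(B(r))} \leq \|\nabla h\|_{L^p(B(r))} + \|\nabla g\|_{L^p(B(r))} \leq C(r/R)^{2/p}\|\nabla h\|_{L^p(B(R))} + \|\nabla g\|_{L^p(B(R))}$, and then bounding $\|\nabla h\|_{L^p(B(R))} \leq \|\nabla v\|_{L^p(B(R))} + \|\nabla g\|_{L^p(B(R))}$, yields exactly the claimed estimate with the coefficient $(r/R)^{2/p} + \|\nabla\lambda\|_{L^2(B(R))} + \|\nabla v\|_{L^2(B(R))}$. I expect the main obstacle to be making the constants genuinely uniform, i.e. independent of the particular $\lambda$ and $v$ and of $R$: one must check that the VMO modulus of $\lambda^2$ controlled only through the single $W^{1,2}$ seminorm — not through a fixed modulus of continuity — still suffices for scale-invariant Calderón–Zygmund and Campanato decay estimates, which in two dimensions works because $W^{1,2} \hookrightarrow \mathrm{VMO}$ with the embedding controlled by the seminorm, but requires care; and for the $p=2$ case one must verify that the commutator terms arising from $\nabla\lambda/\lambda$ when passing to the Hodge/Wente formulation are themselves controllable by $\|\nabla\lambda\|_{L^2}\|\nabla v\|_{L^2}$ rather than producing an uncontrolled term. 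The dependence on $p_0, p_\infty$ (rather than on $p$ itself) comes precisely from needing the interpolation constant and the CZ constant to be uniform over the compact range $[p_0, p_\infty]$.
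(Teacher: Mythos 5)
Your overall architecture (split off a part with good interior decay, estimate the error using the structure of the right-hand side, use Wente for $p=2$ and linear theory for $p>2$) parallels the paper, but two of your steps would fail as written. First, the $p=2$ case: Wente's Lemma~\ref{la:wente} needs the right-hand side of a \emph{Laplace} equation to be an exact Jacobian $\nabla a\cdot\nabla^\perp b$. In your decomposition the error $g$ solves $\div(\lambda^2\nabla g^i)=\Omega_{ij}\cdot\lambda^2\nabla v^j=\nabla^\perp B_{ij}\cdot\nabla v^j$, and when you ``divide by $a=\lambda^2$ and commute'' to reduce to the Laplacian you pick up the prefactor $\lambda^{-2}$ in front of the Jacobian and the term $\lambda^{-2}\nabla(\lambda^2)\cdot\nabla g$. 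Both are mere products of two $L^2$ functions, i.e.\ $L^1$ right-hand sides with no div-curl structure, and an $L^1$ right-hand side does not give $W^{1,2}_0$ estimates -- this is exactly the failure Wente is invoked to avoid, so these ``lower-order commutator terms'' cannot be controlled by $\|\nabla\lambda\|_{L^2}\|\nabla v\|_{L^2}$ by any standard estimate. The paper sidesteps this entirely by Hodge-decomposing the vector field $\lambda^2\nabla v=\nabla a+\nabla^\perp b+H$ on $B(R)$ (rather than decomposing $v$): then $\lap a=\Omega\cdot\lambda^2\nabla v$ is a pure Jacobian by \eqref{eq:shatah2} and $\lap b=\curl(\lambda^2\nabla v)=\nabla^\perp(\lambda^2)\cdot\nabla v$ is a pure Jacobian as well, Wente applies directly to both, and the decay factor $(r/R)^{2/p}$ comes from the genuinely harmonic part $H$ -- no coefficient-freezing and no commutators appear.

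Second, in the $p\in(p_0,p_\infty)$ case your conversion of the quadratic bound $\|\nabla v\|_{L^p(B(R))}^2$ into the claimed linear form via the interpolation $\|\nabla v\|_{L^p}\le C\|\nabla v\|_{L^2}^\theta\|\nabla v\|_{L^{p_\infty}}^{1-\theta}$ is not available: the lemma only assumes $\nabla v\in L^p(B(R))$, not $L^{p_\infty}$, and $\|\nabla v\|_{L^p}^2$ cannot be bounded by $\|\nabla v\|_{L^2}\|\nabla v\|_{L^p}$ without a higher norm. The correct (and simpler) step is to never pass through $L^{p/2}$: estimate the product by H\"older with $\Omega\in L^2$ (via \eqref{eq:omegape}, $\|\Omega\|_{L^2}\le\Lambda\|\nabla v\|_{L^2}$) and $\lambda^2\nabla v\in L^p$, so the right-hand side lies in $L^{\frac{2p}{p+2}}(B(R))\hookrightarrow W^{-1,p}$, giving $\|\nabla g\|_{L^p}\lesssim\|\nabla v\|_{L^2}\|\nabla v\|_{L^p}$ directly, with constants uniform for $p\in(p_0,p_\infty)$; this is what the paper does. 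Finally, your uniform Campanato/Calder\'on--Zygmund constants for the operator $\div(\lambda^2\nabla\cdot)$ are a real issue: a $W^{1,2}$ bound controls the BMO seminorm of $\lambda^2$ on $B(R)$ but not a VMO modulus, so those constants are uniform only after first reducing to the case $\|\nabla\lambda\|_{L^2(B(R))}+\|\nabla v\|_{L^2(B(R))}\le c(\Lambda,p_0,p_\infty)$ (the claimed estimate being trivial otherwise, by choosing $C\ge 1/c$) -- a reduction you flag as ``requires care'' but do not carry out, and which the paper's argument never needs because the coefficient enters its estimates only through the multiplicative factor $\|\nabla\lambda\|_{L^2(B(R))}$.
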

\begin{proof}
We use Hodge decomposition to obtain
\begin{equation}\label{eq:hodge1}
\lambda^2 \nabla v = \nabla a + \nabla^\perp b + H \quad \mbox{in $B(R)$}
\end{equation}
Here $\nabla^\perp = (-\partial_y,\partial_x)$. Namely, we choose $a, b \in W^{1,2}_0(B(R),\R^N)$, and $H$ harmonic in $B(R)$ so that
\[
\begin{cases}
\lap a = \div(\lambda^2 \nabla v) \quad & \mbox{in $B(R)$}\\
a = 0 \quad &\mbox{on $\partial B(R)$}
\end{cases}, \quad \begin{cases}
\lap b = \curl(\lambda^2 \nabla v) \quad & \mbox{in $B(R)$}\\
b = 0 \quad &\mbox{on $\partial B(R)$}
\end{cases}
\]
From \eqref{eq:mrpde} we find that 
\[
\begin{cases}
\lap a = \Omega_{ij}\cdot \lambda^2 \nabla v^j \quad & \mbox{in $B(R)$}\\
a = 0 \quad &\mbox{on $\partial B(R)$}
\end{cases}
\]
From standard elliptic estimates we then obtain for any $p > 2$
\begin{equation}\label{eq:m1:aest}
\|\nabla a\|_{L^p(B(R))} \aleq \|\lambda\|_{\infty}^2 \|\Omega\|_{L^2(B(R))} \|\nabla v\|_{L^p(B(R))}.
\end{equation}
Of course the constant may depend on $p$ as it blows up for $p \to \infty$ or as $p \to 2$. But it is uniform for $p \in (p_0,p_\infty)$. 
For $p = 2$, we use that by \eqref{eq:shatah2} we have a div-curl structure. Then, Wente's Lemma, Lemma~\ref{la:wente}, implies the same estimate \eqref{eq:m1:aest} for $p=2$.

For $b$ we use compute the curl and find 
\[
\begin{cases}
\lap b = \nabla^\perp(\lambda^2) \nabla v \quad & \mbox{in $B(R)$}\\
b = 0 \quad &\mbox{on $\partial B(R)$}
\end{cases}
\]
Again from standard elliptic estimates for $p>2$ and from Wente's Lemma and the div-curl structure for $p=2$ we obtain the estimate
\[
\|\nabla b\|_{L^p(B(R))} \aleq \|\lambda\|_{L^\infty(B(R))}\, \|\nabla \lambda\|_{L^2(B(R))}\, \|\nabla v\|_{L^p(B(R))}
\]
By the assumptions on $\lambda$ and $\Omega$ we thus get
\[
\|\nabla a\|_{L^p(B(R))} + \|\nabla b\|_{L^p(B(R))} \leq 
C(\Lambda,p_0,p_\infty)\, \left (\|\nabla v\|_{L^2(B(R))} + \|\nabla \lambda\|_{L^2(B(R))}\right )\, \|\nabla v\|_{L^p(B(R))}.
\]
In particular we get from \eqref{eq:hodge1},
\[
\|\nabla v\|_{L^p(B(r))} \leq C(\Lambda)\, \|H\|_{L^p(B(r))} + C(\Lambda,p_0,p_\infty)\, \left (\|\nabla v\|_{L^2(B(R))} + \|\nabla \lambda\|_{L^2(B(R))}\right )\, \|\nabla v\|_{L^p(B(R))},
\]
and
\[
\|H\|_{L^p(B(R))} \leq C(\Lambda) \|\nabla v\|_{L^p(B(R))} + C(\Lambda,p_0,p_\infty)\, \left (\|\nabla v\|_{L^2(B(R))} + \|\nabla \lambda\|_{L^2(B(R))}\right )\, \|\nabla v\|_{L^p(B(R))},
\]
The last ingredient is the harmonicity of $H$, which implies for any $r < R$, see, e.g. \cite[Theorem 2.1, p.78]{G83},
\[
\|H\|_{L^p(B(r))} \aleq \brac{\frac{r}{R}}^{\frac{2}{p}}\, \|H\|_{L^p(B(R))}.
\]
Together, the last three estimates imply the claimed result.
\end{proof}

By choosing $r < \theta^{\frac{p}{n}} R$ for $\theta$ small enough we obtain as a corollary
\begin{corollary}\label{co:vpg2uniformmorrey1b}
Let $2 < p_0 < p_\infty < \infty$ and $\Lambda > 1$. Then there exists a constant $C = C(\Lambda,p_0, p_\infty)$, a small $\eps = \eps(\Lambda,p_0,p_\infty) > 0$ and a small $\theta = \theta(\Lambda,p_0,p_\infty)$ so that the following holds.

Let either $p=2$ or $p \in (p_0,p_\infty)$ and $R > 0$. Let 
$v \in W^{1,2}(B(R),\R^N)$ 
be a solution to  \eqref{eq:mrpde} in $B(R)$, where 
$\lambda \in L^\infty \cap W^{1,2}(B(R))$ satisfies \eqref{eq:lambdape} in $B(R)$ and $\Omega_{ij} \in L^2_{loc}(B(R),\R^2)$ satisfies \eqref{eq:omegape} in $B(R)$.
If $p = 2$ we assume moreover \eqref{eq:shatah2} to hold in $B(R)$.

Then, if $\nabla v \in L^p(B(R))$ and if 
\[
\|\nabla \lambda\|_{L^2(B(R))} + \|\nabla v\|_{L^2(B(R))} < \eps 
\]
then for $\sigma := \theta^{\frac{p}{2}}$ we have
\[
\|\nabla v\|_{L^p(B(\sigma R))} \leq \frac{1}{2} \|\nabla v\|_{L^p(B(R))}
\]
\end{corollary}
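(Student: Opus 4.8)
The proof of Corollary~\ref{co:vpg2uniformmorrey1b} is a routine iteration-trigger argument on top of Lemma~\ref{la:vpg2uniformmorrey1}, so the plan is short. First I would fix $p_0, p_\infty$ and $\Lambda$ and take the constant $C = C(\Lambda, p_0, p_\infty)$ from Lemma~\ref{la:vpg2uniformmorrey1}, noting that this $C$ is uniform over $p = 2$ and $p \in (p_0, p_\infty)$. For such $p$ and for any ball $B(R) \subset \B^2$ on which $\|\nabla \lambda\|_{L^2(B(R))} + \|\nabla v\|_{L^2(B(R))} < \eps$ (with $\eps$ to be chosen), Lemma~\ref{la:vpg2uniformmorrey1} gives, for every $r \in (0,R]$,
\[
\|\nabla v\|_{L^p(B(r))} \leq C\, \left( \left(\tfrac{r}{R}\right)^{\frac{2}{p}} + \eps \right)\, \|\nabla v\|_{L^p(B(R))}.
\]

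Now I would choose the parameters to force the prefactor below $\tfrac12$. First pick $\theta = \theta(\Lambda, p_0, p_\infty) \in (0,1)$ so small that $C\, \theta \leq \tfrac14$; this is possible because $C$ does not depend on $\theta$. Then pick $\eps = \eps(\Lambda, p_0, p_\infty) > 0$ so small that $C\, \eps \leq \tfrac14$. With $\sigma := \theta^{p/2}$ we have $(\sigma)^{2/p} = \theta$, so taking $r = \sigma R$ in the displayed estimate yields
\[
\|\nabla v\|_{L^p(B(\sigma R))} \leq C\,\left(\theta + \eps\right)\,\|\nabla v\|_{L^p(B(R))} \leq \left(\tfrac14 + \tfrac14\right)\|\nabla v\|_{L^p(B(R))} = \tfrac12 \|\nabla v\|_{L^p(B(R))},
\]
which is exactly the claim.

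There is essentially no genuine obstacle here; the only point requiring a little care is the order of quantifiers. One must choose $\theta$ first, using only that $C$ is independent of $\theta$ and of $R$, and only afterwards choose $\eps$ — both depending solely on $\Lambda, p_0, p_\infty$ — so that the resulting $\sigma = \theta^{p/2}$ and $\eps$ work simultaneously for $p = 2$ and all $p \in (p_0, p_\infty)$. This is legitimate precisely because the constant $C$ in Lemma~\ref{la:vpg2uniformmorrey1} is uniform in $p$ over that range (and $=2$). One should also record that the smallness hypothesis on the $L^2$ norms of $\nabla\lambda$ and $\nabla v$ is inherited from the smaller ball, i.e. it is imposed on $B(R)$ and the conclusion concerns $B(\sigma R) \subset B(R)$, so no monotonicity in $R$ of the smallness condition is needed. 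Passing from a fixed ball to all balls $B(R) \subset \B^2$ is immediate since Lemma~\ref{la:vpg2uniformmorrey1} already applies on any such ball.
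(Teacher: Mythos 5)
Your proof is correct and follows exactly the route the paper intends: apply Lemma~\ref{la:vpg2uniformmorrey1} at the radius $r=\sigma R$ with $\sigma=\theta^{p/2}$, so that $(r/R)^{2/p}=\theta$, and then choose $\theta$ and $\eps$ (depending only on $\Lambda,p_0,p_\infty$, using the uniformity of $C$ in $p$) so that $C(\theta+\eps)\leq \tfrac12$. Your remarks on the order of quantifiers match the paper's one-line derivation of the corollary.
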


\begin{proof}[Proof of Proposition~\ref{pr:vpg2uniformmorrey2}]
The proof now follows from Corollary~\ref{co:vpg2uniformmorrey1b} by iteration.
Pick $r \in (\sigma^{k-1} R,\sigma^{k} R]$ for some $k \in \N$. 

For now let us assume that $k \geq 2$. 
Repeated application of Corollary~\ref{co:vpg2uniformmorrey1b} implies
\[
\|\nabla v\|_{L^p(B(r))} \leq 2^{1-k} \|\nabla v\|_{L^p(B(R))}
\]
Since for our choice of $r$,
\[
2^{1-k} = \sigma^{(k-1) \frac{\log 2}{-\log \sigma}} \leq \brac{\frac{r}{R}}^{\frac{\log 2}{-\log \sigma}}
\]
we have found that
\[
\|\nabla v\|_{L^p(B(r))} \leq \brac{\frac{r}{R}}^{\frac{\log 2}{-\log \sigma}} \|\nabla v\|_{L^p(B(R))}.
\]
Since $\sigma = \theta^{\frac{p}{2}}$ we choose (independently of $p$)
\[
\alpha := 2\frac{\log 2}{- \log \theta}.
\]
That is, we have shown
\[
\|\nabla v\|_{L^p(B(r))} \leq \brac{\frac{r}{R}}^{\frac{\alpha}{p}} \|\nabla v\|_{L^p(B(R))}
\]
holds for any $r \leq \sigma^2 R$. For $r \in (\sigma^2 R,R)$ we use the trivial estimate
\[
\|\nabla v\|_{L^p(B(r))} \leq \|\nabla v\|_{L^p(B(R))} \leq  \sigma^{-2\frac{\alpha}{p}}\, 
\brac{\frac{r}{R}}^{\frac{\alpha}{p}} \|\nabla v\|_{L^p(B(R))}.
\]
Using again that $\sigma = \theta^{\frac{p}{2}}$ we find for any $r \in (\sigma^2 R,R)$
\[
\|\nabla v\|_{L^p(B(r))} \leq \|\nabla v\|_{L^p(B(R))} \leq  \theta^{-2 \alpha}\, 
\brac{\frac{r}{R}}^{\frac{\alpha}{p}} \|\nabla v\|_{L^p(B(R))}
\]
\end{proof}

\section{\texorpdfstring{$W^{2,2-\eps}$}{W22-eps}-regularity of v}\label{s:vw22}
As a consequence of our analysis in the previous section we obtain H\"older continuity of $v$. 
\begin{proposition}[Initial regularity for $v$]\label{pr:initialreg}
Let $v$ and $\lambda$ be as in Theorem~\ref{th:main2}. Then there exists $\alpha > 0$ such that for every compact $K \subset \B^2$ we have
\begin{equation}\label{eq:initmorreyest}
\sup_{B(y_0,r) \subset K} r^{-\frac{\alpha}{2}} \|\nabla v\|_{L^2(B(y_0,r))}< \infty.
\end{equation}
In particular, by Sobolev embedding in $\R^2$, $v \in C^{0,\alpha}_{loc}$.
\end{proposition}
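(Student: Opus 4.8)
The plan is to run an $\eps$-regularity/Morrey-space argument, using the uniform estimates of the previous section together with the absolute continuity of the energy. First I would fix a compact $K \subset \B^2$ and observe that $v \in W^{1,2}(\B^2,\S^{N-1})$ together with Proposition~\ref{pr:lambdabd} puts us exactly in the setting of Proposition~\ref{pr:vpg2uniformmorrey2} with $p = 2$: on $K$ the function $\lambda$ satisfies $\Lambda^{-1} \le \lambda \le \Lambda$ for a suitable $\Lambda > 1$, $\Omega_{ij} = v^j\nabla v^i - v^i\nabla v^j$ obeys $|\Omega| \le 2|\nabla v|$ almost everywhere, and by Lemma~\ref{la:vel} both the equation \eqref{eq:vel} and Shatah's conservation law \eqref{eq:shatah} hold. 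The only obstruction to applying Proposition~\ref{pr:vpg2uniformmorrey2} directly is the smallness hypothesis $\|\nabla\lambda\|_{L^2(B(R))} + \|\nabla v\|_{L^2(B(R))} \le \eps$, which is not global but which, since $|\nabla\lambda|^2 + |\nabla v|^2 \in L^1(\B^2)$, does hold on every sufficiently small ball.

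Concretely, let $\eps = \eps(\Lambda,p_0,p_\infty) > 0$ and $\alpha = \alpha(\Lambda,p_0,p_\infty) > 0$ be the constants furnished by Proposition~\ref{pr:vpg2uniformmorrey2} (the choice of $2 < p_0 < p_\infty < \infty$ is irrelevant here since we only use $p = 2$). By absolute continuity of the integral there is a radius $R_0 = R_0(K) > 0$, with $R_0$ smaller than $\mathrm{dist}(K,\partial\B^2)$, such that
\[
\|\nabla\lambda\|_{L^2(B(x_0,R_0))} + \|\nabla v\|_{L^2(B(x_0,R_0))} < \eps \quad \text{for every } x_0 \in K.
\]
Fix $x_0 \in K$ and apply Proposition~\ref{pr:vpg2uniformmorrey2} on the ball $B(x_0,R_0)$ with $p = 2$: for every $r < R_0$ we obtain
\[
\|\nabla v\|_{L^2(B(x_0,r))} \le C\,\brac{\frac{r}{R_0}}^{\frac{\alpha}{2}}\,\|\nabla v\|_{L^2(B(x_0,R_0))} \le C\,R_0^{-\frac{\alpha}{2}}\,\|\nabla v\|_{L^2(\B^2)}\; r^{\frac{\alpha}{2}}.
\]
Hence $r^{-\alpha/2}\|\nabla v\|_{L^2(B(x_0,r))}$ is bounded uniformly in $x_0 \in K$ and $r \in (0,R_0)$; for $r \in [R_0,\mathrm{diam}\,K]$ the bound is trivial since $\|\nabla v\|_{L^2(B(x_0,r))} \le \|\nabla v\|_{L^2(\B^2)} \le R_0^{-\alpha/2}\,\|\nabla v\|_{L^2(\B^2)}\,r^{\alpha/2}$. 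This establishes \eqref{eq:initmorreyest}. The passage to $v \in C^{0,\alpha}_{loc}$ is then the classical Morrey–Campanato embedding in dimension two: a $W^{1,2}$ function whose gradient satisfies the Morrey growth $\|\nabla v\|_{L^2(B(y_0,r))} \lesssim r^{\alpha/2}$ on a neighbourhood of every interior point lies in $C^{0,\alpha}_{loc}$, by Dirichlet-growth / Morrey's lemma.

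I expect the only genuine subtlety to be bookkeeping: making sure the constant $\Lambda$ (hence $\eps$ and $\alpha$) is chosen once and for all in terms of $\|\lambda\|_{L^\infty(K')}$ for a slightly larger compact set $K'$, and that the radius $R_0$ is chosen uniformly over $x_0 \in K$ — both handled by the absolute continuity of $\int(|\nabla\lambda|^2 + |\nabla v|^2)$ on the compact set $K'$. No new analytic input beyond Proposition~\ref{pr:vpg2uniformmorrey2} and Morrey's embedding is needed; everything else is a covering/continuity argument.
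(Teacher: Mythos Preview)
Your proposal is correct and follows essentially the same route as the paper: reduce to the setting of Proposition~\ref{pr:vpg2uniformmorrey2} with $p=2$ via Proposition~\ref{pr:lambdabd} and Lemma~\ref{la:vel}, use absolute continuity of $\int(|\nabla\lambda|^2+|\nabla v|^2)$ to guarantee the smallness hypothesis on all sufficiently small balls, and conclude the Morrey decay \eqref{eq:initmorreyest} uniformly over $K$. Your write-up is in fact more explicit than the paper's about the uniform choice of $R_0$, the handling of the range $r\ge R_0$, and the final Morrey--Campanato embedding.
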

\begin{remark}
The proof of H\"older continuity can be found in the literature: from Lemma~\ref{la:vel}, more precisely \eqref{eq:vel} we obtain that for $\xi^i := \lambda^2 \nabla u^i$ we have
\[
 \div(\xi^i) = \Omega_{ik} \xi^k
\]
H\"older regularity now follows from a distorted version of Rivi\`{e}re's celebrated regularity theorem for systems with antisymmetric potential \cite{R07}. More precisely, \cite[Remark 3.4.]{S10} is applicable -- since $\lambda \in L^\infty_{loc}$ by Proposition~\ref{pr:lambdabd} and using also that by assumption $\inf_{\B^2} \lambda > 0$.

In order to obtain later higher regularity, however, we need the estimate \eqref{eq:initmorreyest}.
\end{remark}
\begin{proof}[Proof of Proposition~\ref{pr:initialreg}]
For $0 < r < R$ let $B(y_0,r) \subset B(y_0,R) \subset \B^2$. Since our result is away from the boundary, by Proposition~\ref{pr:lambdabd} we may assume w.l.o.g. that $\lambda$ is bounded in all of $\B^2$. 

Observe that since $\lambda, v \in W^{1,2}(\B^2)$, by absolute continuity of the integral, for any $\eps > 0$ there exists a radius $R_0 > 0$ such that 
\[
\sup_{B(y_0,\rho) \subset \B^2, \rho < R_0} \|\nabla v\|_{L^2(B(y_0,\rho))} + \|\nabla \lambda\|_{L^2(B(y_0,\rho))} < \eps.
\]
The claim then follows from the a priori estimates of Proposition~\ref{pr:vpg2uniformmorrey2} (for $p=2$) and a covering argument.
\end{proof}

\subsection{Slightly higher integrability of the gradient of v}
The next step is higher integrability of the derivative $\nabla v$,
\begin{proposition}[$W^{1,2+\eps}$-regularity for $v$]\label{pr:vl2peps}
Let $v$ and $\lambda$ be as in Theorem~\ref{th:main2}. Then, there exists $p > 2$ such that $v \in W^{1,p}_{loc}(\B^2)$. 
\end{proposition}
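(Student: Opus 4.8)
The plan is to derive higher integrability of $\nabla v$ from a reverse H\"older inequality obtained via a Hodge/Gehring-type argument, bootstrapped off the decay already available from Proposition~\ref{pr:initialreg}. The heart of the matter is that the equation \eqref{eq:vel}, after multiplying through by the bounded weight $\lambda^2$ (which by Proposition~\ref{pr:lambdabd} and the hypothesis $\inf\lambda>0$ satisfies $\Lambda^{-1}\le\lambda\le\Lambda$ on compact sets), can be written in the divergence form $\div(\lambda^2\nabla v^i)=\Omega_{ij}\cdot\lambda^2\nabla v^j$ with $|\Omega|\le C|\nabla v|$, so the right-hand side is quadratic in $\nabla v$. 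First I would fix a compact $K\subset\B^2$ and, for a ball $B(y_0,2\rho)\subset K$, perform the Hodge decomposition $\lambda^2\nabla v=\nabla a+\nabla^\perp b+H$ on $B(y_0,2\rho)$ exactly as in the proof of Lemma~\ref{la:vpg2uniformmorrey1}, but this time estimating $\nabla a$ and $\nabla b$ in $L^1$ (or $L^{q}$, $q<2$) rather than $L^p$, $p>2$: since $\lap a=\Omega_{ij}\cdot\lambda^2\nabla v^j\in L^1$ and $\lap b=\nabla^\perp(\lambda^2)\cdot\nabla v\in L^1$ with zero boundary data, elliptic estimates in $L^q$ for $q<2$ give $\|\nabla a\|_{L^q}+\|\nabla b\|_{L^q}\lesssim \||\nabla v|^2\|_{L^1(B(y_0,2\rho))}=\|\nabla v\|_{L^2(B(y_0,2\rho))}^2$.

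Next I would exploit the Morrey decay from Proposition~\ref{pr:initialreg}: $\|\nabla v\|_{L^2(B(y_0,r))}\le C r^{\alpha/2}$ on $K$, so in particular $\|\nabla v\|_{L^2(B(y_0,2\rho))}$ is small and, more importantly, $\||\nabla v|^2\|_{L^1}$ over small balls decays like $\rho^\alpha$, which means $|\nabla v|^2$ lies in a Morrey space and hence $\nabla a,\nabla b$ gain integrability through the Adams--type Sobolev embedding on Morrey spaces — equivalently, one gets a reverse-H\"older inequality
\[
\left(\barint_{B(y_0,\rho)}|\nabla v|^2\right)^{1/2}\le C\left(\barint_{B(y_0,2\rho)}|\nabla v|^{q}\right)^{1/q}+C\left(\barint_{B(y_0,2\rho)}|\nabla v|^2\right)^{1/2}\cdot\|\nabla v\|_{L^2(B(y_0,2\rho))}
\]
for some $q<2$, where on balls small enough the last factor is $\le\tfrac12$ and can be absorbed. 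Here the harmonic part $H$ is controlled by its own $L^q$ mean on the bigger ball plus the $\nabla a,\nabla b$ contributions, using interior estimates for harmonic functions. Absorbing the small term yields a genuine reverse H\"older inequality with a "gain" exponent $q<2$ and tail term.

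Finally, Gehring's lemma (in the form of Giaquinta--Modica, applicable since the reverse H\"older inequality holds on all sufficiently small balls in $K$) upgrades this to $\nabla v\in L^{p}_{loc}$ for some $p>2$, which is the claim; a covering argument passes from small balls to arbitrary compact subsets. The main obstacle I anticipate is bookkeeping the weight $\lambda^2$: it is only $W^{1,2}\cap L^\infty$, so $\nabla^\perp(\lambda^2)\cdot\nabla v$ is merely an $L^1$ product of two $L^2$ functions, and one must be careful that the $L^q$-elliptic estimates for $q<2$ are applied to genuinely $L^1$ data and that the resulting constants stay uniform as the balls shrink — this is precisely where the Morrey decay of Proposition~\ref{pr:initialreg} is essential, since it converts the bare $L^1$ bound into the decaying bound needed to close the Gehring iteration. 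An alternative, cleaner route avoiding Gehring is to run the $L^p$ a priori estimate of Proposition~\ref{pr:vpg2uniformmorrey2} directly for a fixed $p\in(p_0,p_\infty)$ slightly above $2$, after first establishing $\nabla v\in L^p_{loc}$ qualitatively by a Hodge/duality argument as above; but since that qualitative membership is exactly what we are after, the self-improving reverse-H\"older argument seems the more honest path.
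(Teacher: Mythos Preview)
Your proposal is sound in spirit and would work, but it takes a longer route than the paper. Both approaches share the same two ingredients: the Hodge decomposition $\lambda^2\nabla v=\nabla a+\nabla^\perp b+H$ on a ball, and the Morrey decay $\|\nabla v\|_{L^2(B(r))}\leq C r^{\alpha/2}$ from Proposition~\ref{pr:initialreg}. The difference lies in how the gain of integrability is extracted. You set up a reverse H\"older inequality and invoke Gehring's lemma; the paper instead observes directly that the Morrey decay places $\lap a$ and $\lap b$ into $L^1$-Morrey spaces (with decay exponents $\alpha$ and $\alpha/2$, respectively, the latter because only one factor of $\nabla v$ appears in $\lap b=\nabla^\perp(\lambda^2)\cdot\nabla v$), and then applies Adams' Sobolev embedding on Morrey spaces to conclude $\nabla a,\nabla b\in L^p_{loc}$ for some $p>2$. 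Since $H$ is harmonic and $\lambda$ is bounded above and below, $\nabla v\in L^p_{loc}$ follows at once from the decomposition. You actually mention this Adams-embedding route in passing but then set it aside in favor of Gehring; in fact it is the shorter argument here, requires no iteration or absorption step, and sidesteps the bookkeeping you flag as a concern.

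One technical remark on your Gehring derivation: estimating $\nabla a,\nabla b$ only in $L^q$ for $q<2$ from $L^1$ right-hand sides does not by itself yield control of the $L^2$ average of $\nabla v$ on the inner ball, which is what the left-hand side of your displayed reverse H\"older inequality demands. A cleaner way to reach that inequality is to test \eqref{eq:vel} with $(v-c)\eta^2$ (legitimate since $v$ maps into $\S^{N-1}$ and is bounded), derive a Caccioppoli inequality in which the quadratic right-hand side is absorbed using the small oscillation of $v$ coming from Proposition~\ref{pr:initialreg}, and then combine with Sobolev--Poincar\'e. That route is standard and correct, but again the paper's one-step Adams argument is more economical.
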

\begin{proof}
We apply Hodge decomposition on a ball $B(R)$. Namely we split
\begin{equation}\label{eq:hodge2}
\lambda^2 \nabla v = \nabla a + \nabla^\perp b + H \quad \mbox{in $B(R)$}
\end{equation}
where $H$ is harmonic in $B(R)$ and $a$ and $b$ are chosen as follows (in view of Lemma~\ref{la:vel}):
\[
\begin{cases}
\lap a = \div(\lambda^2 \nabla v) = \Omega \lambda^2 \nabla v \quad & \mbox{in $B(R)$}\\
a = 0 \quad &\mbox{on $\partial B(R)$}
\end{cases}, \quad \begin{cases}
\lap b = \curl(\lambda^2 \nabla v) = \nabla^\perp \lambda^2 \nabla v \quad & \mbox{in $B(R)$}\\
b = 0 \quad &\mbox{on $\partial B(R)$}
\end{cases}
\]
With the $\alpha$ from Proposition~\ref{pr:initialreg}, the structure of $\Omega$, and boundedness of $\lambda$ we obtain
\[
\sup_{B(r) \subset B(R)} r^{-\alpha} \|\lap a\|_{L^1(B(r))} < \infty.
\]
but for $b$, since $\nabla \lambda \in L^2$ only, we find
\[
\sup_{B(r) \subset B(R)} r^{-\frac{\alpha}{2}} \|\lap b\|_{L^1(B(r))} < \infty.
\]
By (a localized version of) the Sobolev embedding for Morrey spaces, see \cite{A75}, we obtain that for any $p \in [1,\frac{2-\alpha}{1-\alpha})$, $\nabla a$ and $\nabla b$ belong to $L^p_{loc}(B(R))$. Since $\alpha > 0$ we can choose $p > 2$, and since $H$ is harmonic on $B(R)$ and $\lambda$ is bounded away from zero, from \eqref{eq:hodge2} we get $\nabla v \in L^p_{loc}(B(R))$.
\end{proof}

\subsection{On integrability of the gradient of v and \texorpdfstring{$W^{2,2-\eps}$}{W22-eps}-regularity}
Now we can (still only assuming that $\lambda \in W^{1,2}$) bootstrap the regularity for $v$ all the way to $W^{1,p}_{loc}$, $p \in (1,\infty)$. For this we adapt an iteration strategy by Sharp and Topping \cite{ST13}, see also generalizations in \cite{S14,S15}. The main technical ingredient are the uniform a priori estimates in Proposition~\ref{pr:vpg2uniformmorrey2}.

\begin{proposition}[$W^{1,p}$-regularity for $v$ for large $p$]\label{pr:vW1p}
Let $v$ and $\lambda$ be as in Theorem~\ref{th:main2}. Then, for any $p \in (1,\infty)$ we have $v \in W^{1,p}_{loc}(\B^2,\R^N)$.
\end{proposition}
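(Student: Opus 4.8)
The plan is to run a Gehring-type / self-improving iteration on the Hodge decomposition, using the uniform a priori estimates of Proposition~\ref{pr:vpg2uniformmorrey2} as the engine. Concretely, I would argue by induction on $p$: suppose $\nabla v \in L^{p}_{loc}(\B^2)$ for some $p \geq 2$ (the base case $p=2$ being the assumption, and the fact $p > 2$ is attainable coming from Proposition~\ref{pr:vl2peps}); I claim then $\nabla v \in L^{q}_{loc}$ for some $q = q(p) > p$ with a definite gap, so that finitely many steps reach any prescribed exponent. The point is that the constants in Proposition~\ref{pr:vpg2uniformmorrey2} are uniform for exponents in a fixed compact interval $(p_0,p_\infty)$, so once we know the gap is uniform we may iterate without the improvement degenerating.

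First I would fix a small ball $B(R) \subset \B^2$ on which $\|\nabla \lambda\|_{L^2(B(R))} + \|\nabla v\|_{L^2(B(R))} < \eps$ — possible near every interior point by absolute continuity of the integral, exactly as in the proof of Proposition~\ref{pr:initialreg} — and perform the Hodge decomposition $\lambda^2 \nabla v = \nabla a + \nabla^\perp b + H$ as in \eqref{eq:hodge2}, with $a,b \in W^{1,2}_0(B(R))$ solving $\lap a = \Omega \cdot \lambda^2 \nabla v$ and $\lap b = \nabla^\perp(\lambda^2)\cdot \nabla v$, and $H$ harmonic. The inductive hypothesis $\nabla v \in L^p_{loc}$ together with boundedness of $\lambda$ and the structure $|\Omega| \leq \Lambda|\nabla v|$ gives $\lap a \in L^{p/2}_{loc}$, hence by Calderón--Zygmund $\nabla a \in L^{(p/2)^{*}}_{loc}$ where $(p/2)^{*} = \frac{2\cdot p/2}{2 - p/2}= \frac{2p}{4-p}$ when $p < 4$, and $\nabla a \in L^s_{loc}$ for all $s<\infty$ once $p/2 \geq 2$, i.e. $p\geq 4$; since $\frac{2p}{4-p} > p$ for $2 \le p < 4$, this is a genuine gain in that range, and beyond $p=4$ there is nothing left to prove for $a$. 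For $b$ the term $\nabla^\perp(\lambda^2)\cdot\nabla v$ is a product of an $L^2$ function with an $L^p$ function, so $\lap b \in L^{\frac{2p}{2+p}}_{loc}$ and $\nabla b \in L^{q_b}_{loc}$ with $\frac{1}{q_b} = \frac{2+p}{2p} - \frac12 = \frac1p$, i.e. only $\nabla b \in L^p_{loc}$ again — no gain from $b$. This is the genuine obstacle: the $b$-part, driven only by $\nabla \lambda \in L^2$, does not self-improve, and the naive Hodge estimate stalls.

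The way around this is precisely the uniform Morrey/decay estimate of Proposition~\ref{pr:vpg2uniformmorrey2}: rather than bounding $\nabla v$ in $L^p$ by $\|\nabla b\|_{L^p}$ directly, one uses that on small balls $\|\nabla v\|_{L^p(B(r))} \leq C (r/R)^{\alpha/p}\|\nabla v\|_{L^p(B(R))}$, which is a scaling-subcritical decay. The Sharp--Topping strategy (cf. \cite{ST13,S14,S15}) is to feed this decay back into the Hodge estimate: the $b$-contribution to $\nabla v$ on $B(r)$ is controlled, after the Riesz-potential / Adams--Morrey embedding \cite{A75}, by the Morrey norm of $\lap b$, which by Hölder is bounded by $\|\nabla\lambda\|_{L^2}$ times the Morrey norm of $\nabla v$; the smallness $\|\nabla\lambda\|_{L^2(B(R))}<\eps$ then makes this a contraction in the appropriate Morrey space. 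Thus one upgrades $\nabla v$ from $L^p$ to the Morrey space $L^{p,\mu}_{loc}$ for a slightly larger $\mu$, and Morrey--Sobolev ($L^{p,\mu}\hookrightarrow L^{q}$ with $\frac1q = \frac1p - \frac{\text{something}}{2}$) converts this gain in $\mu$ into a gain in the integrability exponent. The bookkeeping — tracking that the Morrey exponent improves by a fixed amount each step, staying inside $(p_0,p_\infty)$ so the constants from Proposition~\ref{pr:vpg2uniformmorrey2} stay uniform, and summing the geometric iteration à la Corollary~\ref{co:vpg2uniformmorrey1b} — is the technical heart; once it is set up, the conclusion $v \in W^{1,p}_{loc}$ for every $p<\infty$ follows in finitely many steps, followed by a standard covering argument to pass from small balls to arbitrary compact subsets of $\B^2$.
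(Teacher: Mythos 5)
Your proposal is correct and follows essentially the same route as the paper: Hodge decomposition, the uniform Morrey decay of Proposition~\ref{pr:vpg2uniformmorrey2} applied at the current exponent, the Adams Morrey--Sobolev embedding \cite{A75} to beat the $\nabla\lambda\in L^2$ bottleneck in the $b$-part, and an iteration whose constants are uniform for exponents in $(p_0,p_\infty)$. The only cosmetic difference is that in the paper the decay power $\alpha$ is fixed once and for all and it is the Lebesgue exponent that improves at each step, via the explicit recursion $\frac{1}{p_{i+1}}=\frac{1}{p_i}-\frac{\alpha}{4p_i+4-2\alpha}$, rather than a ``contraction'' improving the Morrey exponent as you phrase it -- but this is the same bookkeeping you describe.
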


\begin{proof}[Proof of Proposition~\ref{pr:vW1p}]
Fix $p_\infty \in (2,\infty)$. We are going to show that $v \in W^{1,p_\infty}_{loc}(\B^2)$.

By Proposition~\ref{pr:vl2peps} we have $v \in W^{1,p_1}_{loc}(\B^2)$. Set $p_0 := \frac{2+p_1}{2}$, and apply Proposition~\ref{pr:vpg2uniformmorrey2}, then for some (uniform) $\alpha$,
\begin{equation}\label{eq:morreylp2}
\sup_{B(r) \subset K} r^{-\frac{\alpha}{p_1}} \|\nabla v\|_{L^{p_1}(B(r))} < \infty.
\end{equation}
As in  \eqref{eq:hodge2} we apply Hodge decomposition on some ball $B(R) \subset \B^2$.
\begin{equation}\label{eq:hodge3}
\lambda^2 \nabla v = \nabla a + \nabla^\perp b + H \quad \mbox{in $B(R)$}
\end{equation}
where $H$ is harmonic in $B(R)$ and in view of Lemma~\ref{la:vel} we have
\[
\begin{cases}
\lap a = \Omega \lambda^2 \nabla v \quad & \mbox{in $B(R)$}\\
a = 0 \quad &\mbox{on $\partial B(R)$}
\end{cases}, \quad \begin{cases}
\lap b = \nabla^\perp \lambda^2 \nabla v \quad & \mbox{in $B(R)$}\\
b = 0 \quad &\mbox{on $\partial B(R)$}
\end{cases}
\]
From \eqref{eq:morreylp2} we obtain
\[
\sup_{B(r) \subset B(R)} r^{-2\frac{\alpha}{p_1}} \|\lap a\|_{L^{\frac{p_1}{2}}(B(r))} < \infty
\]
and (recall that we only have $\nabla \lambda \in L^2$),
\[
\sup_{B(r) \subset B(R)} r^{-\frac{\alpha}{p_1}} \|\lap b\|_{L^{\frac{2p_1}{p_1+2}}(B(r))} < \infty
\]
Observe that since $p_1 > 2$ we have $\frac{p_1}{2} > q_1 := \frac{2p_1}{2p_1+2} > 1$. That is we have,
\[
\sup_{B(r) \subset B(R)} r^{-\frac{\alpha}{p_1}} \brac{\|\lap a\|_{L^{q_1}(B(r))} + \|\lap b\|_{L^{q_1}(B(r))}} < \infty
\]
Again we use the Sobolev embedding on Morrey spaces, see \cite{A75}. For
\[
\frac{1}{p_2} := \frac{1}{q_1} - \frac{1}{2-\frac{\alpha}{p_1} q_1} = \frac{1}{p_1} -\frac{\alpha}{4p_1+4-2\alpha} 
\]
we get
\[
\sup_{B(r) \subset B(R/2)} r^{- \frac{\frac{\alpha}{p_1+1}}{p_2}} \brac{\|\nabla a\|_{L^{p_2}(B(r))} + \|\nabla b\|_{L^{p_2}(B(r))}} < \infty.
\]
In particular, from \eqref{eq:hodge3} and harmonicity of $H$ we get $v \in L^{p_2}(B(R/2))$. By a covering argument we conclude that $v \in W^{1,p_2}_{loc}(\B^2)$.

So we define a sequence $(p_i)_{i}$ by 
\[
\frac{1}{p_{i+1}} :=  \frac{1}{p_i} -\frac{\alpha}{4p_i+4-2\alpha}.
\]
By induction we obtain from Proposition~\ref{pr:vpg2uniformmorrey2} $v \in W^{1,p_{i+1},\R^N}_{loc}(\B^2)$ for every $i \in \N$ such that $p_i < p_\infty$. The important point is that $\alpha$ is uniform and does not depend on each $i$.

Clearly $p_{i+1} \geq p_{i}$ and $\lim_{i \to \infty} p_i = \infty$. That is there exists $i_0 \in \N$ such that $p_i < p_\infty$ and $p_{i+1} > p_\infty$. That means that $v \in W^{1,p_\infty}_{loc}(\B^2)$.
\end{proof}
As a direct corollary from Proposition~\ref{pr:vW1p} and the Euler-Lagrange equations in Lemma~\ref{la:vel} we obtain $W^{2,2-\eps}_{loc}$-regularity for $v$. Observe that in view of the counterexamples in \cite{ST13} this is the best regularity for $v$ one can hope for without having further improvements on the regularity of $\lambda$.
\begin{corollary}\label{co:vW22}
Let $v$ and $\lambda$ be as in Theorem~\ref{th:main2}. Then, for any $q \in (1,2)$ we have $v \in W^{2,q}_{loc}$.
\end{corollary}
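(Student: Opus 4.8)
The plan is to combine the $W^{1,p}$-regularity of $v$ for all finite $p$ (Proposition~\ref{pr:vW1p}) with the Euler-Lagrange equation \eqref{eq:vel} and standard Calder\'on-Zygmund theory for the Laplacian. The point is that \eqref{eq:vel} can be rewritten as a Poisson equation for $v$ with a right-hand side that now lives in a good $L^q$ space.

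First I would expand the divergence in \eqref{eq:vel}: since $\lambda \in L^\infty_{\loc} \cap W^{1,2}$ and (after Proposition~\ref{pr:lambdabd}) bounded above and below, write
\[
\lambda^2 \lap v^i = -2\lambda\, \nabla \lambda \cdot \nabla v^i + \Omega_{ij}\cdot \lambda^2 \nabla v^j ,
\]
hence
\[
\lap v^i = -\frac{2}{\lambda}\, \nabla \lambda \cdot \nabla v^i + \frac{1}{\lambda^2}\,\Omega_{ij}\cdot \lambda^2 \nabla v^j =: f^i \quad \mbox{in $\B^2$.}
\]
Now I estimate $f^i$ in $L^q_{\loc}$ for $q < 2$. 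The second term is controlled by $|\Omega||\nabla v| \aleq |\nabla v|^2$ (using \eqref{eq:omegape}-type bounds, which hold here since $\Omega_{ij} = v^j\nabla v^i - v^i \nabla v^j$ and $|v|=1$), and by Proposition~\ref{pr:vW1p} we have $\nabla v \in L^{2q'}_{\loc}$ for any finite exponent, so $|\nabla v|^2 \in L^q_{\loc}$ for every $q < \infty$, in particular for $q \in (1,2)$. The first term is $\frac{2}{\lambda}\nabla\lambda\cdot\nabla v^i$; here $\nabla\lambda \in L^2_{\loc}$ only, but $\nabla v \in L^s_{\loc}$ for every finite $s$, so by H\"older $\nabla\lambda\cdot\nabla v \in L^q_{\loc}$ for any $q < 2$ (take $s$ with $\frac1q = \frac12 + \frac1s$). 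Thus $f^i \in L^q_{\loc}(\B^2)$ for every $q \in (1,2)$.

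Then I would localize: fix $q \in (1,2)$ and a ball $B(R) \Subset \B^2$, pick a cutoff $\eta \in C_c^\infty(B(R))$ with $\eta \equiv 1$ on $B(R/2)$, and solve $\lap w^i = \eta f^i - (\mbox{lower order commutator terms involving } \nabla\eta\cdot\nabla v^i \mbox{ and } (\lap\eta) v^i)$ with zero boundary data on $B(R)$; since all these terms are in $L^q(B(R))$ (the commutator terms are even better, as $\nabla v \in L^q_{\loc}$ and $v \in L^\infty$), Calder\'on-Zygmund gives $w^i \in W^{2,q}(B(R))$. The difference $\eta v^i - w^i$ is harmonic in $B(R)$, hence smooth there, so $v^i \in W^{2,q}(B(R/2))$. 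A covering argument yields $v \in W^{2,q}_{\loc}(\B^2)$ for every $q \in (1,2)$, as claimed.

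I do not expect any serious obstacle here; this is a routine bootstrap from the already-established $\nabla v \in \bigcap_{p<\infty} L^p_{\loc}$. The only mildly delicate point is the first term in $f^i$, where one must spend the full strength of Proposition~\ref{pr:vW1p} (arbitrary finite integrability of $\nabla v$) to absorb the mere $L^2$-integrability of $\nabla\lambda$ and still land strictly below exponent $2$ — which is exactly why one cannot push past $W^{2,2-\eps}$ at this stage, consistent with the counterexamples in \cite{ST13}.
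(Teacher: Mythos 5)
Your proposal is correct and follows essentially the same route as the paper: expand $\div(\lambda^2\nabla v)$ using \eqref{eq:vel}, estimate the term $\nabla\lambda\cdot\nabla v$ by H\"older with $\nabla\lambda\in L^2_{\loc}$ and $\nabla v\in L^s_{\loc}$ for all finite $s$ (Proposition~\ref{pr:vW1p}), bound the $\Omega$-term by $|\nabla v|^2\in L^q_{\loc}$, and conclude $\lap v\in L^q_{\loc}$, hence $v\in W^{2,q}_{\loc}$ by Calder\'on--Zygmund. The explicit cutoff localization you add is just the standard implementation of the ``standard elliptic estimates'' the paper invokes.
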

\begin{proof}
From Proposition~\ref{pr:vW1p} we have that $v \in W^{1,p}_{loc}(\B^2)$ for any $p \in (1,\infty)$. From Lemma~\ref{la:vel} we thus get that for any $q \in (1,2)$ -- recall that $\nabla \lambda \in L^2(\B^2)$ --
\[
\div(\lambda^2 \nabla v) \in L^q_{loc}(\B^2)
\]
Now
\[
\lap v = \div(\lambda^{-2}  \lambda^2 \nabla v) = \nabla \lambda^{-2}\, \lambda^2\, \nabla v + L^q_{loc}(\B^2).
\]
Since $\inf_{\B^2} \lambda > 0$ we have that $\lambda^{-2} \in H^1(\B^2)$ and thus
\[
\lap v \in L^q_{loc}(\B^2).
\]
Standard elliptic estimates imply now $v \in W^{2,q}_{loc}(\B^2,\R^N)$.
\end{proof}
\section{\texorpdfstring{$W^{2,2}$}{W22}-regularity for \texorpdfstring{$\lambda$}{lambda}}\label{s:W22lambda}
By now, for $\lambda$ and $v$ as in Theorem~\ref{th:main2} we have shown in Lemma~\ref{pr:lambdabd} that $\lambda \in L^\infty_{loc}(\B^2)$ and in Corollary~\ref{co:vW22} that $v \in W^{2,q}_{loc}(\B^2)$ for any $q \in (1,2)$. 

Recall that we assume that $\lambda \geq 1$. It will be notationally convenient to work with $\mu := \lambda - 1$, which is a critical point of the energy
\[
\int |\nabla \mu|^2 + \int (\mu^2 - 2 \mu)|\nabla v|^2 \quad 
\]
So in the following we are going to consider the regularity of critical points $\mu \in L^\infty(\B^2,[0,\infty))$
\[
F(\mu) := \int |\nabla \mu|^2 + \int (\mu^2 - 2 \mu)\, g \quad \mbox{subject to $\mu \geq 0$}
\]
where $g \in W^{1,q}_{loc}(\B^2)$ for any $q < 2$, in particular $g \in L^2_{loc}(\B^2)$.

First we observe the variational inequality.
\begin{lemma}
Let $\mu$ as above, i.e. a critical point of $F$. Then, for any $\varphi \in C_c^\infty(\B^2)$ such that $\varphi \geq 0$ we have
\begin{equation}\label{eq:muel}
\int_{\B^2} \nabla \mu \cdot \nabla (\varphi-\mu) + \int_{\B^2} \mu (\varphi-\mu) g - \int_{\B^2} (\varphi-\mu) g\geq 0.
\end{equation}
\end{lemma}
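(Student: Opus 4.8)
The statement to prove is the variational inequality \eqref{eq:muel} for a critical point $\mu$ of the functional $F(\mu) = \int |\nabla \mu|^2 + \int (\mu^2 - 2\mu)\,g$, subject to $\mu \geq 0$. The plan is to compute the first variation of $F$ along admissible perturbations and then exploit the one-sided nature of the constraint.

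First I would fix $\varphi \in C_c^\infty(\B^2)$ with $\varphi \geq 0$ and consider, for $\eps \in [0,1]$, the competitor $\mu_\eps := \mu + \eps(\varphi - \mu) = (1-\eps)\mu + \eps \varphi$. This is an admissible perturbation: since $\mu \geq 0$ and $\varphi \geq 0$, convexity gives $\mu_\eps \geq 0$ almost everywhere for every $\eps \in [0,1]$, and $\mu_\eps \in L^\infty(\B^2)$ with $\mu_\eps - \mu \in H^1_0$. Criticality of $\mu$ then yields $\frac{d}{d\eps}\big|_{\eps = 0^+} F(\mu_\eps) \geq 0$ (one-sided, because we can only perturb toward the admissible set).

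Next I would carry out the differentiation. Writing $\mu_\eps = \mu + \eps\psi$ with $\psi := \varphi - \mu \in H^1_0 \cap L^\infty$, one has $\int |\nabla \mu_\eps|^2 = \int |\nabla \mu|^2 + 2\eps \int \nabla \mu \cdot \nabla \psi + \eps^2 \int |\nabla \psi|^2$, and $\int (\mu_\eps^2 - 2\mu_\eps) g = \int(\mu^2 - 2\mu)g + \eps \int (2\mu\psi - 2\psi) g + \eps^2 \int \psi^2 g$. Here I need $g \in L^1_{loc}$ and $\mu, \psi \in L^\infty$ to justify that all these integrals are finite and that the difference quotient converges; since $g \in L^2_{loc}$ and $\psi$ is compactly supported this is routine. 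Differentiating at $\eps = 0^+$ gives
\[
\frac{d}{d\eps}\Big|_{\eps=0^+} F(\mu_\eps) = 2\int_{\B^2} \nabla \mu \cdot \nabla(\varphi - \mu) + 2\int_{\B^2} \mu(\varphi-\mu)g - 2\int_{\B^2}(\varphi-\mu)g \geq 0,
\]
and dividing by $2$ yields exactly \eqref{eq:muel}.

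The only genuine subtlety — the "main obstacle," though it is mild here — is ensuring that the perturbation $\mu_\eps$ is genuinely admissible in the precise sense required by the definition of criticality inherited from Theorem~\ref{th:main2}: one must check the constraint $\mu_\eps \geq 0$ (equivalently $\lambda_\eps \geq 1$) and the integrability needed for $F(\mu_\eps)$ to make sense. Both follow from the convex-combination structure of $\mu_\eps$ together with $\mu \in L^\infty$ and $g \in L^2_{loc}$; there is no regularity input on $\mu$ beyond what has already been established (boundedness from Proposition~\ref{pr:lambdabd} and $\mu \in H^1$). No Hodge decomposition, elliptic estimate, or compactness argument is needed — this lemma is purely the bookkeeping step that records the Euler–Lagrange inequality for $\mu$ in a form convenient for the viscosity-solution arguments to come.
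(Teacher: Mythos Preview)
Your proof is correct and follows exactly the paper's approach: the paper's entire proof is the single line ``This follows using the variation $\mu_\eps := \mu + \eps(\varphi-\mu)$,'' and you have simply filled in the differentiation and admissibility checks that this line implicitly invokes. Two minor remarks: your claim that $\mu_\eps - \mu = \eps(\varphi-\mu) \in H^1_0$ is not literally justified unless $\mu$ itself vanishes at the boundary, but the paper is equally informal here (the argument is tacitly local); and your closing comment that this lemma feeds ``the viscosity-solution arguments to come'' is slightly off --- it is actually the input for the Frehse-type difference-quotient argument for $W^{2,2}$-regularity in the same section, with the viscosity arguments appearing only later in Section~\ref{s:lambdac1a}.
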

\begin{proof}
This follows using the variation
\[
\mu_\eps := \mu + \eps (\varphi-\mu).
\]
\end{proof}
The variational inequality \eqref{eq:muel} for $\mu$ is almost of the form of variational inequalities considered e.g. in \cite[(2.6)]{F71}, where Frehse showed how Nirenberg's method of discretely differentiating partial differential equations can be adapted to variational inequalities. Indeed, the only additional term that does not appear in \cite[(2.6)]{F71} is $\int_{\B^2} \mu (\varphi-\mu) g$. So we (slightly) adapt Frehse's argument to obtain
\begin{proposition}
Let $\mu \in W^{1,2}(\B^2) \cap L^\infty_{loc}(\B^2)$ as above, i.e. a critical point of $F$. If $g \in W^{1,q}_{loc}(\B^2)$ for any $q < 2$, then $\mu \in W^{2,2}_{loc}(\B^2)$.
\end{proposition}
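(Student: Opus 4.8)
The plan is to adapt Frehse's difference-quotient method for variational inequalities \cite{F71} to the inequality \eqref{eq:muel}, tracking the extra term $\int_{\B^2} \mu(\varphi-\mu)g$. Fix a ball $B(2R) \subset\subset \B^2$ and a cutoff $\zeta \in C_c^\infty(B(2R))$ with $\zeta \equiv 1$ on $B(R)$. For a direction $e \in \{e_1,e_2\}$ and small $h$, write $\tau_h w(x) := w(x+he)$ and $\Delta_h w := (\tau_h w - w)/h$. The idea is to insert into \eqref{eq:muel} the test function
\[
\varphi := \mu - h\,\Delta_{-h}\!\left(\zeta^2 \Delta_h \mu\right),
\]
which is admissible provided we first replace $\mu$ by $\max\{\mu, \text{something}\}$ or, more simply, note that $\varphi \geq 0$ whenever $h$ is small, because the correction is $O(h)$ in $L^\infty$ on the support of $\zeta$ once we know $\mu \in W^{1,2}\cap L^\infty_{loc}$ — this is the standard Frehse trick, and the positivity is where one must be slightly careful (one uses that on $\{\mu = 0\}$ the correction term has a favorable sign, or truncates). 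Actually the cleanest route is Frehse's: test with $\varphi = \mu + t\psi$ for an appropriate $\psi$ built from difference quotients and a penalization, but since $\mu$ is a \emph{minimizer} in the $\mu$-variable (the energy $F$ is convex in $\mu$), one may equivalently use the obstacle-problem comparison directly.

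The key steps, in order: (1) Plug the difference-quotient test function into \eqref{eq:muel} and expand. The leading term $\int \nabla\mu\cdot\nabla\varphi$ produces, after the usual discrete integration by parts, the good term $\int \zeta^2 |\Delta_h \nabla\mu|^2$ up to lower-order pieces absorbed via Young's inequality and the bound $\|\Delta_h w\|_{L^2} \lesssim \|\nabla w\|_{L^2}$. (2) Handle the term $-\int (\varphi - \mu)g = \int h\,\Delta_{-h}(\zeta^2\Delta_h\mu)\,g = -\int \zeta^2 \Delta_h\mu\,\Delta_h g$ (discrete integration by parts): here I use $g \in W^{1,q}_{loc}$ for $q$ slightly below $2$, so $\Delta_h g$ is bounded in $L^q_{loc}$ uniformly in $h$, and $\Delta_h \mu$ is bounded in $L^{q'}_{loc}$ for the conjugate $q' > 2$ — but wait, we only know $\nabla\mu \in L^2$, so $\Delta_h\mu \in L^2$ only. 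Instead one pairs $\Delta_h g \in L^q$ with $\Delta_h\mu \in L^{2}$ and uses $q < 2$... this does not close by Hölder alone. The fix is to use the H\"older/Morrey regularity already available: actually $\mu = \lambda - 1 \in L^\infty_{loc}$, and more is true — from Proposition~\ref{pr:vpg2uniformmorrey2} applied to the $\lambda$-equation, or simply from the fact that $\mu$ solves $-\Delta\mu \geq (1-\mu)g \in L^2_{loc}$ as a minimizer of a convex obstacle problem, standard obstacle-problem theory gives $\mu \in W^{2,2}_{loc}$ directly once the inhomogeneity is $L^2$! Indeed $(\mu^2 - 2\mu)g$ has gradient terms, but the Euler--Lagrange \emph{inequality} only sees $-\Delta\mu + (\mu - 1)g \geq 0$ with equality on $\{\mu > 0\}$, i.e. $-\Delta\mu = (1-\mu)g$ on $\{\mu>0\}$ with $(1-\mu)g \in L^2_{loc}$ since $\mu \in L^\infty_{loc}$, $g \in L^2_{loc}$.

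So the streamlined plan is: (a) derive from \eqref{eq:muel} that $\mu$ is a solution of the obstacle problem $\min\{-\Delta\mu - (1-\mu)g,\ \mu\} = 0$ with obstacle $0$ and right-hand side $f := (1-\mu)g \in L^2_{loc}(\B^2)$ (using $\mu \in L^\infty_{loc}$ from Proposition~\ref{pr:lambdabd} and $g \in L^2_{loc}$); (b) invoke the classical $W^{2,2}_{loc}$-regularity for the obstacle problem with $L^2$ data — this is exactly the content of Frehse's theorem, or alternatively a penalization argument: approximate by $-\Delta\mu_\epsilon + \beta_\epsilon(\mu_\epsilon) = f$ with $\beta_\epsilon$ a smooth monotone penalty, get uniform $W^{2,2}_{loc}$ bounds by testing with $\Delta^2_h$-type quotients against the \emph{equation} (now the penalty term has a sign and drops out, or contributes $\int \beta_\epsilon'(\mu_\epsilon)|\Delta_h\mu_\epsilon|^2\zeta^2 \geq 0$), and pass to the limit; (c) conclude $\mu \in W^{2,2}_{loc}(\B^2)$, equivalently $\lambda \in W^{2,2}_{loc}(\B^2)$.

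\textbf{Main obstacle.} The delicate point is step (a)/(b) reconciling the fact that the energy $F$ contains the term $(\mu^2 - 2\mu)g$ whose $\mu$-derivative is $2(\mu-1)g$ — perfectly $L^2_{loc}$ — so the variational inequality is genuinely of the scalar obstacle type with $L^2$ inhomogeneity, \emph{not} requiring any integration against $\nabla g$. Thus the extra term relative to \cite[(2.6)]{F71} is harmless precisely because it is a zeroth-order term in $\mu$ with coefficient in $L^2_{loc}$; the real work is just checking that Frehse's difference-quotient argument tolerates a bounded-measurable (not continuous) lower-order coefficient, which it does since that term enters only through $\int \zeta^2 (\text{bdd})(\Delta_h\mu)^2$ plus $\int \zeta^2 g\,\Delta_h\mu\,\Delta_h\mu$-type expressions absorbed into the good term and the $L^2$ norm of $f$. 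I therefore expect the proof to be a careful but routine transcription of \cite[Section 2]{F71}, with the only genuinely new line being the estimate of $\int \zeta^2 (\mu^2-2\mu)\,\Delta_h g\,\Delta_h\mu$ — handled by writing it via $f = (1-\mu)g$ and never differencing $g$ at all, i.e. keeping the problem in divergence-free form $-\Delta\mu \le f$, $\mu(-\Delta\mu - f) = 0$.
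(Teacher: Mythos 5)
Your final (streamlined) plan is correct, and it takes a genuinely different route from the paper. The paper keeps the quadratic term $\int\mu(\varphi-\mu)g$ as it stands and re-runs Frehse's difference-quotient argument on the inequality \eqref{eq:muel}; this forces a difference quotient onto $g$, which is exactly where the hypothesis $g\in W^{1,q}_{loc}$, $q<2$, is used (Lemma~\ref{la:muet}: one pairs $\|\nabla(\eta\mu g)\|_{L^{q}}$ with $\|\delta_{i;h}(\eta\mu)\|_{L^{q/(q-1)}}\lesssim\|\nabla\delta_{i;h}(\eta\mu)\|_{L^{2}}$ via the two-dimensional Sobolev embedding --- incidentally, this embedding is precisely the resolution of the pairing problem that stalled your first, abandoned difference-quotient attempt). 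You instead absorb the zeroth-order term into the datum: since $\mu\in L^\infty_{loc}$ (Proposition~\ref{pr:lambdabd}) and $g\in L^2_{loc}$, the inequality \eqref{eq:muel} says exactly that $\mu$ satisfies the variational inequality of the \emph{linear} zero-obstacle problem with frozen right-hand side $f:=(1-\mu)g\in L^2_{loc}$, and the classical interior $W^{2,2}$-theory for that problem (Frehse's theorem \cite{F71} itself, or your penalization argument where the monotone penalty contributes a nonnegative term, or Lewy--Stampacchia giving $f\le-\Delta\mu\le f^{+}$) then applies verbatim. What your route buys: $g$ is never differenced, so only $g\in L^2_{loc}$ is needed and Frehse's estimate need not be re-proved; the paper's route buys nothing extra for this particular proposition, since in the application $g=|\nabla v|^2$ does lie in $W^{1,q}_{loc}$ anyway. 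The one point you should spell out is that criticality of $\mu$ yields the frozen variational inequality against all local comparisons $v\in\mu+H^1_0(B)$ with $v\ge0$ (by convexity of the constraint, $\mu+\eps(v-\mu)=(1-\eps)\mu+\eps v\ge0$ for $\eps\in[0,1]$), so that on each ball $B\subset\subset\B^2$ the function $\mu$ coincides with the unique solution of the frozen obstacle problem with its own boundary data, to which the classical interior regularity theorem applies; with that line added, your argument closes.
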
\label{pr:vw22}
We now follow closely Frehse's argument in \cite{F71}, and only prove the differences.

Firstly, we introduce first and second order differential quotients,
\[
\delta_{i;h} \mu(x) := \frac{\mu(x+he_i) - \mu(x)}{h},
\]
\[
\delta_{i,j;h} \mu(x) := \frac{\mu(x+he_i) + \mu(x-he_j) - 2\mu(x)}{h^2}
\]
The main first observation in \cite[Hilfssatz 1]{F71}
\begin{lemma}
Let $\eta \in C_c^\infty(\B^2)$, $\eta \geq 0$. Then for any $h < \dist(\supp \eta,\partial \B^2)$ we have for $i = 1,2$,
\[
\int_{\B^2} \nabla \mu \cdot \nabla (\eta^2 \delta_{i,i;h}\mu) + \int_{\B^2} \mu (\eta^2 \delta_{i,i;h}\mu) g - \int_{\B^2} (\eta^2 \delta_{i,i;h}\mu) g\geq 0.
\]
\end{lemma}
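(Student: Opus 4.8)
The statement to be proven is the ``Hilfssatz 1''-type inequality: for $\eta \in C_c^\infty(\B^2)$ with $\eta \geq 0$ and $h$ small relative to the distance from $\supp\eta$ to $\partial\B^2$,
\[
\int_{\B^2} \nabla \mu \cdot \nabla (\eta^2 \delta_{i,i;h}\mu) + \int_{\B^2} \mu (\eta^2 \delta_{i,i;h}\mu)\, g - \int_{\B^2} (\eta^2 \delta_{i,i;h}\mu)\, g\geq 0.
\]
The idea is to produce this as a sum of two copies of the variational inequality \eqref{eq:muel}, one with a forward-translated test function and one with a backward-translated test function, following Frehse's device for turning second-order difference quotients into admissible obstacle-type variations. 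First I would choose the test function $\varphi := \mu + \tfrac{1}{2}\eta^2(x)\big(\delta_{i;h}\mu(x) - \delta_{i;-h}\mu(x)\big)$ — equivalently, build the competitor by adding to $\mu$ a positive multiple of $\eta^2$ times an average of shifted copies of $\mu$, so that $\varphi \geq 0$ holds wherever $\eta \neq 0$ (this is where one uses $\mu \geq 0$ and that translates of $\mu$ are still $\geq 0$). Since $\mu$ need not be bounded globally I would first replace $\mu$ by $\min\{\mu,M\}$ or, more simply, use that $\mu \in L^\infty_{\loc}$ and $\eta$ is compactly supported, so all the difference quotients appearing are of an $L^\infty\cap W^{1,2}$ function on a neighborhood of $\supp\eta$ and the required competitors are genuinely in $H^1_0 \cap L^\infty$.

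Concretely, I would plug into \eqref{eq:muel} the two test functions $\varphi_+ := \mu + \tfrac14\eta^2\,\delta_{i;h}\mu$ evaluated so that the difference quotient structure emerges, and its mirror $\varphi_-$ with $h \mapsto -h$, then add the resulting two inequalities. The key algebraic identity is a discrete integration by parts (summation by parts / shift of variables in the integral): for $f,\phi$ with $\phi$ compactly supported,
\[
\int f\,\delta_{i;-h}\phi = -\int (\delta_{i;h} f)\,\phi,
\]
and iterating this, the sum of the forward and backward variations reorganizes exactly into $\int \nabla\mu\cdot\nabla(\eta^2\delta_{i,i;h}\mu)$ for the Dirichlet part and into $\int \mu(\eta^2\delta_{i,i;h}\mu)g - \int(\eta^2\delta_{i,i;h}\mu)g$ for the lower-order parts, because $\delta_{i,i;h}\mu(x) = \delta_{i;h}(\delta_{i;-h}\mu)(x)$ up to the normalization built into the definition of $\delta_{i,i;h}$ in the paper. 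The factor $\tfrac14$ (or $\tfrac12$) is chosen precisely so that after adding the two inequalities the $\eta^2\delta_{i,i;h}\mu$ term appears with coefficient $1$; nonnegativity of the combined inequality is automatic since each summand inequality is $\geq 0$ and we are adding with positive weights.

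The one genuinely new term compared to Frehse is $\int \mu(\varphi-\mu)g$. In \eqref{eq:muel} this is linear in $\varphi - \mu$, just like the term $-\int(\varphi-\mu)g$, so the same forward/backward-difference bookkeeping applies verbatim and yields the stated $\int\mu(\eta^2\delta_{i,i;h}\mu)g$ contribution; no integrability issue arises at this stage because we only need the integrals to make sense for fixed $h>0$, where $\delta_{i,i;h}\mu$ is a bounded (indeed $L^\infty_{\loc}$) function and $g \in L^2_{\loc}$, $\mu \in L^\infty_{\loc}$. The main obstacle — really the only place requiring care — is verifying admissibility of the competitor: one must check that $\varphi_\pm \geq 0$ a.e. (handled by the sign of $\mu$, the sign of $\eta^2$, choosing $h$ small and, if needed, truncating $\mu$ at a high level that does not affect the support of $\eta$), that $\varphi_\pm \in C_c^\infty$ is relaxed to $\varphi_\pm \in H^1_0 \cap L^\infty$ by a density argument so that \eqref{eq:muel} still applies, and that the support condition $h < \dist(\supp\eta,\partial\B^2)$ guarantees all translated arguments stay inside $\B^2$. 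Once admissibility is in place, the proof is the purely formal computation sketched above, so I would present it as: (i) fix $h$, truncate/mollify as needed; (ii) write down $\varphi_\pm$; (iii) sum the two instances of \eqref{eq:muel}; (iv) apply discrete integration by parts to each of the three groups of terms; (v) read off the claimed inequality.
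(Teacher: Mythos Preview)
Your very first test function is already the whole proof, and it is exactly what the paper does. Since $\delta_{i;h}\mu - \delta_{i;-h}\mu = h\,\delta_{i,i;h}\mu$, your competitor $\varphi = \mu + \tfrac12\eta^2(\delta_{i;h}\mu - \delta_{i;-h}\mu)$ is simply $\mu + \eps\,\eta^2\,\delta_{i,i;h}\mu$ with $\eps = h/2$. The paper just takes $\mu_\eps := \mu + \eps\,\eta^2\,\delta_{i,i;h}\mu$ for $\eps \ll h$ (precisely $\eps \le h^2/(2\|\eta\|_\infty^2)$), notes that
\[
\mu_\eps(x) = \Bigl(1 - \tfrac{2\eps\eta^2(x)}{h^2}\Bigr)\mu(x) + \tfrac{\eps\eta^2(x)}{h^2}\bigl(\mu(x+he_i)+\mu(x-he_i)\bigr) \ge 0
\]
is a convex combination of nonnegative values, plugs $\varphi = \mu_\eps$ into \eqref{eq:muel}, and divides by $\eps$. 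That is the entire argument; the inequality is linear in $\varphi-\mu$, so no further manipulation is needed.

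Everything after your first sentence is unnecessary and in places incorrect. Your ``concrete'' plan with $\varphi_\pm = \mu + \tfrac14\eta^2\delta_{i;\pm h}\mu$ does not work as stated: summing those two gives $\varphi_+ + \varphi_- - 2\mu = \tfrac14\eta^2(\delta_{i;h}\mu + \delta_{i;-h}\mu)$, which is the \emph{centered first} difference $\tfrac{1}{4h}\eta^2(\mu(\cdot+he_i)-\mu(\cdot-he_i))$, not the second difference. To get $\delta_{i,i;h}\mu$ from two first-order pieces you would need opposite signs, $\varphi_\pm - \mu = \pm c\,\eta^2\delta_{i;\pm h}\mu$; but then you are back to the single competitor above. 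The discrete integration by parts you invoke is not used in this lemma at all --- it appears only in the \emph{next} lemma, where one estimates $\int \mu\,\eta^2\,\delta_{i,i;h}\mu\,g$. Likewise the truncation/mollification layer is superfluous: $\mu\in L^\infty_{\loc}$ and $\supp\eta$ is compact, so for fixed $h$ every term is a priori finite and \eqref{eq:muel} extends to $\varphi\in H^1\cap L^\infty$ with $\varphi\ge 0$ and $\varphi-\mu$ compactly supported by routine density.
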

\begin{proof}
Observe that for $\eps \ll h$ we have that
\[
\mu_\eps := \mu + \eps \eta^2 \delta_{i,j;h}\mu \geq 0.
\]
In particular, $\mu_\eps$ is a permissible variation of $\mu$, and the claim follows.
\end{proof}

The only term that we have to estimate additionally to Frehse's \cite{F71} is the following:
\begin{lemma}\label{la:muet}
\[
\int_{\B^2} \mu \eta^2\, \delta_{i,i;h}\mu\, g \aleq C\, \brac{1+\|\nabla \delta_{i,h}(\eta \mu )\|_{L^2(\B^2)}}
\]
where $C$ depends on $\supp \eta$, $\|\mu\|_{L^\infty}$, $\|\nabla \mu\|_{L^2}$, $\|\nabla \eta\|_{L^\infty}$, $\|g\|_{L^p}$, for $p$ sufficiently close to $\infty$ and $\|\nabla g\|_{W^{1,q}}$ for a $q < 2$ sufficiently close to $2$.
\end{lemma}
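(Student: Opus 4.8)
The plan is to estimate the extra term $\int_{\B^2}\mu\,\eta^2\,\delta_{i,i;h}\mu\,g$ by ``moving half of the second-order difference quotient onto the test function,'' exactly the summation-by-parts trick used throughout Frehse's argument. Recall the discrete integration by parts identity $\int f\,\delta_{i,i;h}\mu = \int \mu\,\delta_{i,i;-h}f$ (up to harmless shifts), and the product rule for difference quotients $\delta_{i,h}(fg)(x) = f(x+he_i)\,\delta_{i,h}g(x) + \delta_{i,h}f(x)\,g(x)$. First I would write $\mu\,\eta^2\,g = \eta\,(\eta\mu)\,g$ and transfer one factor of $\delta_{i,h}$ from $\delta_{i,i;h}\mu = \delta_{i,-h}\delta_{i,h}\mu$ onto the rest, so that up to controlled error terms the integral becomes $\int_{\B^2} \delta_{i,h}(\eta\mu)\cdot \delta_{i,h}\big(\eta\,\mu\,g\big)$-type expressions, in which the ``bad'' factor $\delta_{i,i;h}\mu$ has been replaced by the first-order quantity $\delta_{i,h}(\eta\mu)$ whose $L^2$-norm of the gradient is precisely the quantity $\|\nabla\delta_{i,h}(\eta\mu)\|_{L^2(\B^2)}$ appearing on the right.

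The key steps, in order: (1) perform the discrete integration by parts to reduce to integrals of the schematic form $\int \delta_{i,h}(\eta\mu)\, \delta_{i,h}(\eta\mu\,g)$ plus lower-order commutator terms coming from difference quotients hitting $\eta$ rather than $\mu$; (2) expand $\delta_{i,h}(\eta\mu g)$ by the discrete product rule into a piece with $\delta_{i,h}(\eta\mu)\cdot (\text{translate of }g)$ and a piece with $(\text{translate of }\eta\mu)\cdot \delta_{i,h}g$; (3) for the first piece, use H\"older with exponents $2, p, \frac{2p}{p-2}$: the $\delta_{i,h}(\eta\mu)$ factor in $L^2$ gives the linear dependence on $\|\nabla\delta_{i,h}(\eta\mu)\|_{L^2}$, the translate of $g$ is bounded in $L^p$, and the remaining factor $\delta_{i,h}(\eta\mu)$ is controlled in $\frac{2p}{p-2}$ (which is just slightly above $2$, hence finite for $p$ large) by the already-established $W^{1,2+\eps}$-type regularity of $\mu$ — this is where $g\in L^p$ for $p$ close to $\infty$ is used; (4) for the second piece, since $\delta_{i,h}g$ converges in $L^q$ to $\nabla g$ for $q<2$ close to $2$ (here $g\in W^{1,q}_{loc}$ is used), and $\eta\mu$ is bounded, one bounds it by $\|\delta_{i,h}(\eta\mu)\|_{L^{q'}}\,\|\eta\mu\|_\infty\,\|\delta_{i,h}g\|_{L^q}$ with $q' = \frac{q}{q-1}$ slightly above $2$, again finite by $\mu\in W^{1,2+\eps}$; (5) collect all commutator terms where $\delta_{i,h}$ lands on $\eta$ — these contain only $\mu$, $\nabla\mu\in L^2$, $\|\nabla\eta\|_\infty$, and $g\in L^2_{loc}$, hence contribute to the constant $C$ but not to the $\|\nabla\delta_{i,h}(\eta\mu)\|_{L^2}$ term. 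The uniform bound $\|\delta_{i,h}f\|_{L^s}\lesssim\|\nabla f\|_{L^s}$ (valid for $f\in W^{1,s}$, $s>1$, uniformly in $h$) is used repeatedly.

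The main obstacle I anticipate is bookkeeping the translation shifts and commutator terms carefully enough that the \emph{only} term carrying a factor of $\|\nabla\delta_{i,h}(\eta\mu)\|_{L^2(\B^2)}$ appears to the first power (so that, when this estimate is fed back into Frehse's Caccioppoli-type inequality for $\int \eta^2 |\nabla\delta_{i,h}\mu|^2$, it can be absorbed by Young's inequality into the left-hand side); any term that accidentally produced $\|\nabla\delta_{i,h}(\eta\mu)\|_{L^2}^2$ would have to be shown to come with a small constant, and a term with a higher power would break the argument entirely. A secondary subtlety is exponent matching: one must check that $\frac{2p}{p-2}$ and $\frac{q}{q-1}$ are both $\le 2+\eps$ for the $p$, $q$ in the allowed ranges, so that the $W^{1,2+\eps}$-regularity of $v$ (hence of $\lambda$, hence of $\mu$) established earlier — combined with $\mu\in L^\infty_{loc}$ — is exactly enough; this is why the statement restricts to ``$p$ sufficiently close to $\infty$'' and ``$q<2$ sufficiently close to $2$.'' Everything else is a routine application of H\"older's inequality and the standard difference-quotient calculus.
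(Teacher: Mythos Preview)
Your overall strategy --- move one difference onto the test function via discrete integration by parts, then H\"older --- matches the paper's. But there is a genuine gap in how you close the estimate.

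You repeatedly invoke ``the already-established $W^{1,2+\eps}$-type regularity of $\mu$'' to bound $\|\delta_{i,h}(\eta\mu)\|_{L^{2p/(p-2)}}$ and $\|\delta_{i,h}(\eta\mu)\|_{L^{q'}}$ uniformly in $h$. No such regularity is available: at this point in the argument one only knows $\mu=\lambda-1\in W^{1,2}\cap L^\infty_{loc}$. The higher integrability proved earlier (Propositions~\ref{pr:vl2peps} and~\ref{pr:vW1p}) is for $v$, not for $\lambda$; nothing in the paper transfers $\nabla v\in L^p$ to $\nabla\lambda\in L^p$ prior to this lemma --- indeed, the whole purpose of Section~\ref{s:W22lambda} is to obtain the first regularity improvement for $\lambda$ beyond $W^{1,2}$. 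So steps (3) and (4) as written do not go through. A related confusion: you say ``the $\delta_{i,h}(\eta\mu)$ factor in $L^2$ gives the linear dependence on $\|\nabla\delta_{i,h}(\eta\mu)\|_{L^2}$,'' but $\|\delta_{i,h}(\eta\mu)\|_{L^2}\lesssim\|\nabla(\eta\mu)\|_{L^2}$ is a \emph{constant}, not the second-order quantity on the right of the lemma.

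The paper resolves this with a single stroke you are missing: since $\delta_{i,h}(\eta\mu)$ has compact support and we are in two dimensions, the Sobolev--Poincar\'e inequality gives
\[
\|\delta_{i,h}(\eta\mu)\|_{L^{q/(q-1)}}\ \lesssim\ \|\nabla\delta_{i,h}(\eta\mu)\|_{L^2}
\]
for any $q\in(1,2)$. After the discrete integration by parts the paper simply estimates by $\|\nabla(\eta\mu g)\|_{L^q}\,\|\delta_{i,h}(\eta\mu)\|_{L^{q/(q-1)}}$; the first factor is bounded using only $\nabla\mu\in L^2$, $\mu\in L^\infty$, $g\in L^p$ (large $p$) and $\nabla g\in L^q$ ($q<2$), and the second factor supplies exactly the linear term $\|\nabla\delta_{i,h}(\eta\mu)\|_{L^2}$. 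No splitting via the product rule on $\delta_{i,h}(\eta\mu g)$ and no extra regularity of $\mu$ are needed. If you replace your appeals to $\mu\in W^{1,2+\eps}$ by this 2D Sobolev--Poincar\'e step, your outline becomes essentially the paper's proof.
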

\begin{proof}
First, a standard application of the discrete Leibniz rule,
\[
\int_{\B^2} \mu \eta^2\, \delta_{i,i;h}\mu\, g = -\int_{\B^2} \mu\, \delta_{i,-h} \delta_{i,h}(\eta\, \mu)\, g + C.
\]
Thus, with the discrete integration by parts rule we obtain for any $q \in (1,\infty)$,
\[
\|\nabla (\eta \mu  g)\|_{L^{q}}\, \|\delta_{i,h} (\eta \mu) \|_{L^{\frac{q}{q-1}}}
\]
For $q < 2$ we have
\[
\|\nabla (\eta \mu  g)\|_{L^{q}} \aleq C.
\]
On the other hand, since $\delta_{i,h} (\eta \mu)$ has compact support, by Sobolev-Poincar\`e-embedding (since we are in two dimensions) for any $q \in (1,2)$,
\[
\|\delta_{i,h} (\eta \mu) \|_{L^{\frac{q}{q-1}}} \aleq \|\nabla \delta_{i,h} (\eta \mu) \|_{L^{2}}.
\]
\end{proof}

\begin{proof}[Proof of Proposition~\ref{pr:vw22}]
Following word-by-word the Frehse's argument in \cite{F71}, using additionally the estimate Lemma~\ref{la:muet} we obtain, cf. \cite[p. 149]{F71},
\[
\|\nabla (\delta_{i,h} (\mu \eta))\|_{L^2}^2 \leq C\, \brac{1+\|\nabla (\delta_{i,h} (\mu \eta))\|_{L^2}}.
\]
From Young's inequality we obtain
\[
\|\nabla (\delta_{i,h} (\mu \eta))\|_{L^2}^2 \leq C +4C^2+ \frac{1}{2} \|\nabla (\delta_{i,h} (\mu \eta))\|_{L^2}^2 
\]
and thus
we obtain a bound on $\|\nabla (\delta_{i,h} (\mu \eta))\|_{L^2}^2$ independent of $h$. Letting $h \to 0$ we get that 
\[
\nabla \partial_i (\mu \eta) \in L^2,
\]
which readily leads to $u \in W^{2,2}$ in the set where $\eta \equiv 1$. Taking $\eta \in C_c^\infty(\B^2)$ and $\eta \equiv 1$ on $K \subset \B^2$, $K$ compact, we get that $\mu \in W^{2,2}_{loc}(K)$. This holds for any compact set $K \subset \B^2$, so the claim is proven.
\end{proof}

\begin{corollary}\label{co:w22lw2pv}
Let $v$ and $\lambda$ be as in Theorem~\ref{th:main2}. Then $\lambda \in W^{2,2}_{loc}(\B^2)$, $v \in W_{loc}^{2,p}(\B^2,\R^N)$ for any $p < \infty$. In particular $\lambda \in C^{0,\alpha}$ and $v \in C^{1,\alpha}$ for any $\alpha < 1$.
\end{corollary}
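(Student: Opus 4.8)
The plan is to combine the two strands of regularity that have now been established: $W^{2,2}_{loc}$ for $\mu = \lambda-1$ (hence for $\lambda$) via Proposition \ref{pr:vw22}, and $W^{1,p}_{loc}$ for all $p<\infty$ for $v$ via Proposition \ref{pr:vW1p}. First I would invoke Proposition \ref{pr:vw22} with $g = |\nabla v|^2$: by Corollary \ref{co:vW22} we have $v \in W^{2,q}_{loc}$ for every $q<2$, and combined with $v \in W^{1,p}_{loc}$ for every $p<\infty$ (Proposition \ref{pr:vW1p}) one gets $g = |\nabla v|^2 \in W^{1,q}_{loc}$ for every $q<2$ --- indeed $\nabla(|\nabla v|^2) = 2\,\nabla^2 v \cdot \nabla v$, which lies in $L^q_{loc}$ by H\"older once $\nabla^2 v \in L^q$ and $\nabla v \in L^{s}$ for $s$ large. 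Hence the hypothesis of Proposition \ref{pr:vw22} is met and $\mu \in W^{2,2}_{loc}(\B^2)$, i.e. $\lambda \in W^{2,2}_{loc}(\B^2)$.

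Next I would upgrade $v$ using this new information on $\lambda$. Since $\lambda \in W^{2,2}_{loc}$ in two dimensions, Sobolev embedding gives $\nabla \lambda \in L^{s}_{loc}$ for every $s<\infty$ (not merely $L^2$), and $\lambda \in C^{0,\alpha}_{loc}$ for every $\alpha<1$; in particular $\lambda^{-2} \in W^{2,2}_{loc}$ as well, with $\nabla(\lambda^{-2}) \in L^s_{loc}$ for all $s<\infty$ because $\lambda$ is bounded away from $0$. Now rewrite the Euler--Lagrange equation \eqref{eq:vel} as
\[
\lap v^i = \nabla(\lambda^{-2})\cdot \lambda^2 \nabla v^i + \lambda^{-2}\,\Omega_{ij}\cdot\lambda^2\nabla v^j
= \nabla(\lambda^{-2})\cdot\lambda^2\nabla v^i + \Omega_{ij}\cdot\nabla v^j .
\]
The right-hand side is a product of an $L^s_{loc}$ function ($\nabla(\lambda^{-2})$ or $\Omega$, which is controlled by $|\nabla v| \in L^p_{loc}$ for all $p$) and an $L^p_{loc}$ function ($\nabla v$), so by H\"older it lies in $L^p_{loc}(\B^2)$ for every $p<\infty$. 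Standard interior $L^p$ elliptic estimates then give $v \in W^{2,p}_{loc}(\B^2,\R^N)$ for every $p<\infty$.

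Finally the $C^{1,\alpha}$ and $C^{0,\alpha}$ claims follow from Sobolev embedding in dimension two: $W^{2,p}_{loc} \hookrightarrow C^{1,1-2/p}_{loc}$ gives $v \in C^{1,\alpha}_{loc}$ for every $\alpha<1$ (letting $p\to\infty$), and $W^{2,2}_{loc} \hookrightarrow W^{1,s}_{loc} \hookrightarrow C^{0,1-2/s}_{loc}$ gives $\lambda \in C^{0,\alpha}_{loc}$ for every $\alpha<1$. I do not expect any genuine obstacle here; the one point requiring a little care is verifying $g = |\nabla v|^2 \in W^{1,q}_{loc}$ for $q<2$ close to $2$ so that Proposition \ref{pr:vw22} applies --- this needs the full strength of both Proposition \ref{pr:vW1p} (so that $\nabla v$ is in every $L^s$) and Corollary \ref{co:vW22} (so that $\nabla^2 v \in L^q$), and then H\"older's inequality with exponents $\frac1q = \frac1q + \frac1s - \frac1s$ arranged so the product is in $L^q$. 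Everything after that is a single round of the elliptic bootstrap already used repeatedly above.
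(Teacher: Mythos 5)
Your argument is correct and follows essentially the same route as the paper: apply Proposition~\ref{pr:vw22} with $g=|\nabla v|^2$ (whose $W^{1,q}_{loc}$-regularity for $q<2$ you verify, as the paper does implicitly, from Corollary~\ref{co:vW22} together with Proposition~\ref{pr:vW1p}) to get $\lambda\in W^{2,2}_{loc}$, then rewrite \eqref{eq:vel} as an equation for $\lap v$ with right-hand side in $L^p_{loc}$ and invoke Calder\'on--Zygmund, finishing with Sobolev embedding for the H\"older statements. No gaps; your write-up merely spells out details (the H\"older-exponent bookkeeping and the embeddings) that the paper leaves to the reader.
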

\begin{proof}
By Corollary~\ref{co:vW22} we have that $|\nabla v|^2 \in W^{1,q}_{loc}$ for any $q < 2$. Thus Proposition~\ref{pr:vw22} is applicable to $\mu = \lambda-1$, and we get that $\lambda = \mu+1\in W^{2,2}_{loc}(\B^2)$.

To obtain $W^{2,p}_{loc}$-regularity for $v$, we consider again the equations for $v$, \eqref{eq:vel}, and note that 
\[
\div(\lambda^2 \nabla v) \in L^p_{loc}(\B^2)
\]
Moreover, since $\lambda \geq 1$, we compute
\[
\lap v = \div(\lambda^{-2} \lambda^2 \nabla v) =  \lambda^{-} \nabla \lambda\, \nabla v + \lambda^{-2} \div(\lambda^2 \nabla v).
\]
Since $\nabla \lambda \in W^{1,2}_{loc}$ and $\nabla v \in W^{1,q}_{loc}$ for any $q < 2$, we obtain that $\lap v \in L^p_{loc}$ and consequently standard Calderon-Zygmund theory implies that $\nabla^2 v \in L^p_{loc}$.
\end{proof}

\section{On \texorpdfstring{$C^{1,\alpha}$}{C1alpha}-regularity for \texorpdfstring{$\lambda$}{lambda}}\label{s:lambdac1a}
At this stage we have that $v \in W^{2,p}_{loc}(\B^2,\R^N)$ for all $p \in (1,\infty)$ and $\lambda \in W^{2,2}_{loc}(\B^2)$. Observe that this implies in particular that $\lambda$ is continuous. Since the obstacle condition $\lambda \geq 1$ is pointwise, the theory of viscosity solutions (see e.g. \cite{CC95,K12}) is more suitable now.

\begin{proposition}\label{pr:lvs}
There exists a constant $\Lambda > 0$ such that $\lambda$ solve in viscosity sense the inequalities
\[
0 \leq \lap \lambda \leq \Lambda \quad \mbox{in $\B^2$}.
\]
\end{proposition}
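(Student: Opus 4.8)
The plan is to exploit the variational inequality \eqref{eq:muel} for $\mu = \lambda - 1$, now that we know $\mu$ is continuous (indeed $W^{2,2}_{loc}$) and $g = |\nabla v|^2 \in W^{1,q}_{loc}$ for any $q<2$, hence $g \in L^p_{loc}$ for any $p<\infty$. First I would record the two one-sided obstacle facts in the classical (a.e.\ or weak) sense: on the noncoincidence set $\{\lambda > 1\}$ one can vary $\mu$ in both directions, so $-\lap \mu + \mu g - g = 0$ there, i.e.\ $\lap \lambda = (\lambda-1)g - \text{(nothing)}$... more precisely $-\lap \mu = g - \mu g = g(1-\mu) = g(2-\lambda)$, so $\lap\lambda = g(\lambda - 2)$ on $\{\lambda>1\}$; while globally the one-sided variation $\mu_\eps = \mu + \eps\varphi$, $\varphi \geq 0$, gives $-\lap\mu + \mu g - g \leq 0$ in the sense of distributions, i.e.\ $\lap\mu \geq \mu g - g = g(\mu - 1) = g(\lambda - 2)$. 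The goal is to upgrade these to the clean two-sided viscosity bound $0 \leq \lap\lambda \leq \Lambda$.

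Second, I would establish the lower bound $\lap\lambda \geq 0$ in viscosity sense. Here the natural route is to show $\lambda$ is subharmonic, or rather that $\lap\lambda \geq 0$ distributionally, which for a continuous function is equivalent to the viscosity statement. On $\{\lambda = 1\}$, $\lambda$ attains its minimum, so any smooth $\phi$ touching $\lambda$ from above at such a point has a minimum there too and $\lap\phi \geq 0$ trivially is not what we want — we need $\phi$ touching from \emph{below}, where $\lambda \geq \phi$ forces $\lap\phi(x_0) \leq \ldots$; I should be careful about which inequality corresponds to which sign convention in \cite{CC95}. The cleanest argument: from the distributional inequality $\lap\mu \geq g(\lambda-2)$ and the fact that on $\{\lambda>1\}$ we have equality $\lap\mu = g(\lambda-2)$, together with the standard observation that for the obstacle problem the Laplacian of the solution, restricted to the coincidence set, is nonnegative (the obstacle $1$ being a constant, hence harmonic, and $\lambda$ lying above it). Concretely, $\lap\lambda = \lap\mu \geq 0$ can be seen by testing against nonnegative $\varphi$: on $\{\mu>0\}$, $-\lap\mu = g(2-\lambda)$, and one needs $g(2-\lambda) \leq 0$, i.e.\ $\lambda \geq 2$ there — which is \emph{not} generally true. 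So the lower bound cannot come purely from the sign of $g(2-\lambda)$; instead it must come from the obstacle structure itself, exactly as in the classical obstacle problem where $\lap u = \mu_{obst}\, \chi_{\{u=\psi\}} + \lap\psi$ with a nonnegative measure. I expect this to be the main obstacle: one must show that the distributional Laplacian $\lap\lambda$, which a priori is $g(\lambda-2)$ plus a (signed) measure supported on $\{\lambda=1\}$, actually has that measure nonnegative, and that $g(\lambda-2)$ contributes a term controlled from below by $0$ after absorbing — this should follow by writing $\lap\lambda = g(\lambda-2)\chi_{\{\lambda>1\}} + \nu$ with $\nu \geq 0$ a measure on $\{\lambda=1\}$ obtained from the variational inequality, and noting that on $\{\lambda>1\}$ near $\{\lambda=1\}$ continuity of $\lambda$ makes $g(\lambda-2)$ close to $-g < 0$, which is the ``wrong'' sign, so in fact the correct statement is $\lap\lambda \geq g(\lambda-2)$ everywhere and one uses $g \geq 0$ together with $\lambda \leq 2$ on a neighborhood of the coincidence set (true by continuity since $\lambda = 1$ there) only to get a local bound; for the global clean $0$ one should instead argue via the penalized/truncated problem or directly observe that $\lambda - 1 \geq 0$ solves $\min(-\lap\mu + \mu g - g, \mu) = 0$ which gives $-\lap\mu \leq \max(g - \mu g, 0)$-type bounds. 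I would resolve this by following the standard viscosity-solution characterization of the obstacle problem: $\mu = \lambda - 1$ satisfies $\lap\mu \geq (\text{r.h.s.})$ everywhere and $\lap\mu = (\text{r.h.s.})$ on $\{\mu>0\}$, which by the continuity of $\mu$ and the min-characterization is precisely equivalent to $\mu$ being a viscosity solution of $\min(-\lap\mu + \mu g - g,\ \mu) = 0$; from this the two-sided bound on $\lap\mu$ follows by testing with touching paraboloids, using $g \in L^\infty_{loc}$ after the earlier regularity and $0 \leq \mu \leq \|\mu\|_\infty$.

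Third, for the upper bound: on $\{\lambda>1\}$ we have $\lap\lambda = g(\lambda-2)$ pointwise a.e., and since $g = |\nabla v|^2 \geq 0$, $1 \leq \lambda \leq C$ locally by Proposition~\ref{pr:lambdabd}, and $g \in L^\infty_{loc}$ by Corollary~\ref{co:w22lw2pv} (as $v \in C^{1,\alpha}$, so $\nabla v \in L^\infty_{loc}$), we get $\lap\lambda = g(\lambda - 2) \leq g(\lambda-1) \leq C\|g\|_{L^\infty_{loc}} =: \Lambda$ on the noncoincidence set; on the coincidence set, where $\lambda$ is touched from above, the touching paraboloid argument combined with $\lap\lambda \leq \Lambda$ on the approaching noncoincidence points passes to the limit. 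I would also need to double-check the sign in \eqref{eq:muel}: testing with $\varphi$ \emph{large} (so $\varphi - \mu \geq 0$) gives the distributional inequality in one direction, and equality on $\{\mu > 0\}$ comes from being able to use both $\varphi - \mu \geq 0$ and $\varphi - \mu \leq 0$ locally there. The passage from ``a.e.\ pointwise inequality on the open noncoincidence set plus global distributional inequality for a continuous function'' to ``viscosity inequality everywhere'' is standard (a continuous distributional supersolution is a viscosity supersolution, see \cite{CC95}), so after assembling these pieces the proof concludes. The main work, and the step I expect to require the most care, is correctly pinning down the obstacle characterization so that the lower bound $\lap\lambda \geq 0$ genuinely holds rather than the weaker $\lap\lambda \geq g(\lambda-2)$ which has the wrong sign near the free boundary — this is resolved by recognizing that the obstacle measure on $\{\lambda=1\}$ is nonnegative and dominates the (negative) bulk contribution $g(\lambda-2)$ there in the distributional sense, exactly the classical structure of the obstacle problem.
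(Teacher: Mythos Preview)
Your proposal has a genuine error that cascades into all the artificial difficulties you encounter: the equation you derive on $\{\lambda>1\}$, namely $\lap\lambda = g(\lambda-2)$, is wrong. The correct equation is $\lap\lambda = \lambda g = \lambda|\nabla v|^2$. You were misled by a sign typo in the paper's reformulation $F(\mu)=\int|\nabla\mu|^2+(\mu^2-2\mu)g$; since $\lambda^2=(\mu+1)^2=\mu^2+2\mu+1$, the quadratic term should be $\mu^2+2\mu$. If you instead compute the Euler--Lagrange equation directly from $E(\lambda,v)=\int|\nabla\lambda|^2+\lambda^2|\nabla v|^2$ by varying $\lambda\mapsto\lambda+\eps\psi$ on $\{\lambda>1\}$, you get $-\lap\lambda+\lambda|\nabla v|^2=0$, hence $\lap\lambda=\lambda g\geq 0$ there. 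All of your ``wrong sign near the free boundary'' worries, the obstacle measure $\nu$, the min-characterization, and the penalization discussion evaporate once this sign is corrected.

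With the correct equation the paper's argument is very short. For $\lap\lambda\geq 0$ in viscosity sense: at $x_0$ with $\lambda(x_0)>1$ the equation $\lap\lambda=\lambda g\geq 0$ holds classically (in fact $u=\lambda v$ is harmonic there, so $\lambda,v\in C^\infty$); at $x_0$ with $\lambda(x_0)=1$, any smooth $\varphi$ touching $\lambda$ from above satisfies $\varphi\geq\lambda\geq 1$ with $\varphi(x_0)=1$, so $x_0$ is a minimum of $\varphi$ and $\lap\varphi(x_0)\geq 0$. That is the entire argument for the lower bound. For $\lap\lambda\leq\Lambda$, the one-sided variation $\lambda_\eps=\lambda+\eps\varphi$ with $\varphi\geq 0$ yields distributionally $\int\nabla\lambda\cdot\nabla\varphi+\lambda\varphi|\nabla v|^2\geq 0$, i.e.\ $\lap\lambda\leq\lambda|\nabla v|^2\leq\Lambda$ since by Corollary~\ref{co:w22lw2pv} $\lambda|\nabla v|^2\in L^\infty_{loc}$; the paper then passes to the viscosity sense by mollifying $\lambda$ and using locally uniform convergence. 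Your outline for the upper bound is in the right spirit, but again simpler once you use the correct right-hand side $\lambda g$ rather than $(\lambda-2)g$.
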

This Proposition is a consequence of Lemma~\ref{la:visc:lambdag} and Lemma~\ref{la:visc:lambdal} below. The first observation is that $\lambda$ is smooth in the open set $\{\lambda > 1\}$.
\begin{lemma}\label{la:lvsmooth}
We have $\lambda, v \in C^\infty(\{\lambda > 1\})$ and we have
\begin{equation}\label{eq:leq}
\lap \lambda = \lambda |\nabla v|^2 \quad \mbox{pointwise in  $\{\lambda > 1\}$}
\end{equation}
In view of Corollary~\ref{co:w22lw2pv} there exists in particular $\Lambda > 0$ such that 
\[
\lap \lambda \leq \Lambda \quad \mbox{in  $\{\lambda > 1\}$}.
\]
\end{lemma}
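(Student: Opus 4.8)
The plan is to first establish smoothness of the pair $(\lambda,v)$ on the open set $U := \{\lambda > 1\}$ by a bootstrap argument, and then to derive the pointwise equation \eqref{eq:leq} from the already-known weak/variational formulation, after which the upper bound on $\lap\lambda$ is immediate from Corollary~\ref{co:w22lw2pv}. The key point for smoothness is that on $U$ the obstacle is inactive: locally around any $x_0\in U$ with $\lambda(x_0)>1$, continuity of $\lambda$ (which we have since $\lambda\in W^{2,2}_{loc}\subset C^0$ in two dimensions, cf.\ Corollary~\ref{co:w22lw2pv}) gives a ball $B(x_0,\rho)\subset U$ on which $\lambda\geq 1+\delta$ for some $\delta>0$. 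On such a ball any variation $\lambda+\eps\varphi$ with $\varphi\in C_c^\infty(B(x_0,\rho))$ is admissible for $\pm\eps$ small, so the variational inequality for $\lambda$ becomes the Euler--Lagrange \emph{equation}
\[
\lap\lambda = \lambda\,|\nabla v|^2 \quad\text{in } B(x_0,\rho),
\]
in the weak sense, while $v$ continues to solve the weighted harmonic map system \eqref{eq:vel}, i.e.\ $\div(\lambda^2\nabla v^i)=\Omega_{ij}\cdot\lambda^2\nabla v^j$, with $\Lambda^{-1}\leq\lambda\leq\Lambda$ on this ball.

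Next I would run the bootstrap on this fixed ball. We already know $\lambda\in W^{2,2}_{loc}$ and $v\in W^{2,p}_{loc}$ for all $p<\infty$, hence $v\in C^{1,\alpha}$ and $\lambda\in C^{0,\alpha}$ for every $\alpha<1$. So $|\nabla v|^2\in C^{0,\alpha}$ and $\lambda\,|\nabla v|^2\in C^{0,\alpha}$; Schauder estimates applied to $\lap\lambda=\lambda|\nabla v|^2$ give $\lambda\in C^{2,\alpha}_{loc}(B(x_0,\rho))$. Feeding this back, the coefficient $\lambda^2$ in the equation for $v$ is now $C^{2,\alpha}$ and bounded away from $0$; rewriting $\lap v = -2\lambda^{-1}\nabla\lambda\cdot\nabla v + \lambda^{-2}\,\Omega\cdot\lambda^2\nabla v$ and noting the right-hand side is $C^{1,\alpha}$ (as $v\in C^{1,\alpha}$, $\nabla\lambda\in C^{1,\alpha}$), Schauder gives $v\in C^{3,\alpha}_{loc}$. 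One then alternates: improved regularity of $v$ improves $\lambda|\nabla v|^2$, which improves $\lambda$ by one derivative, which improves the coefficients in the $v$-equation, and so on. Each step gains a derivative, and since the ball $B(x_0,\rho)$ and the ellipticity constant $\Lambda$ are fixed, this yields $\lambda,v\in C^\infty(B(x_0,\rho))$. As $x_0\in U$ was arbitrary, $\lambda,v\in C^\infty(U)$, and \eqref{eq:leq} now holds classically on all of $U=\{\lambda>1\}$.

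Finally, the estimate $\lap\lambda\leq\Lambda$ in $\{\lambda>1\}$ follows by combining \eqref{eq:leq} with the bounds from Corollary~\ref{co:w22lw2pv}: $\lambda$ is locally bounded (Proposition~\ref{pr:lambdabd}) and $\nabla v\in L^\infty_{loc}$ since $v\in C^{1,\alpha}$, so $\lambda|\nabla v|^2$ is locally bounded; on a fixed interior compact set this gives a uniform constant $\Lambda>0$ with $\lap\lambda=\lambda|\nabla v|^2\leq\Lambda$ there. I expect the only mildly delicate point to be bookkeeping in the alternating Schauder bootstrap — making sure the $v$-equation is written in a form (non-divergence, with $C^{k,\alpha}$ coefficients and right-hand side) to which interior Schauder estimates apply at each stage — but there is no genuine obstacle, since the obstacle constraint has been removed on $U$ and the system is a standard elliptic one with a scalar equation coupled to a system, both uniformly elliptic.
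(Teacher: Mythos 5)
Your argument is correct, but it takes a genuinely different route to smoothness than the paper does. The paper reverts to the map $u=\lambda v$ itself: on a ball compactly contained in $\{\lambda>1\}$ the obstacle is inactive, so $u+\eps\varphi$ is admissible for every $\varphi\in C_c^\infty$ and small $\eps$ of either sign, whence $\lap u=0$ there; harmonicity gives $u\in C^\infty$ at once, and then $\lambda=|u|$ and $v=u/|u|$ are smooth because $|u|>1$, with \eqref{eq:leq} obtained afterwards from the one-sided-turned-two-sided variation in $\lambda$ alone. You instead stay entirely in the split $(\lambda,v)$ formulation of Theorem~\ref{th:main2}: you localize, observe that the constraint being slack gives the unconstrained equation $\lap\lambda=\lambda|\nabla v|^2$ weakly, keep the system \eqref{eq:vel} for $v$, and run an alternating Schauder bootstrap starting from the regularity of Corollary~\ref{co:w22lw2pv}. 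Both work; the paper's argument is shorter and needs essentially only continuity of $\lambda$ (smoothness is instantaneous from harmonicity of $u$), while yours is longer but has the merit of never leaving the $(\lambda,v)$ variational setup, so you do not need to argue that arbitrary perturbations of $u$ are encoded by admissible perturbations of the pair $(\lambda,v)$, a point the paper passes over quickly. One bookkeeping slip in your bootstrap: at the stage $\lambda\in C^{2,\alpha}$, $v\in C^{1,\alpha}$, the right-hand side $-2\lambda^{-1}\nabla\lambda\cdot\nabla v+\Omega\cdot\nabla v$ still contains $\nabla v\in C^{0,\alpha}$, so it is only $C^{0,\alpha}$ and Schauder yields $v\in C^{2,\alpha}$, not $C^{3,\alpha}$; the iteration then gains one derivative per round for each of $\lambda$ and $v$ and still reaches $C^\infty$, so this is harmless. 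Your treatment of the final bound $\lap\lambda\leq\Lambda$ via local boundedness of $\lambda|\nabla v|^2$ matches the paper.
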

\begin{proof}
We revert our attention to $u := \lambda v$ and show that $u \in C^\infty(\{\lambda > 1\})$. Let $x_0 \in \{\lambda > 1\}$. Then, since $\lambda$ is continuous, there exists a ball  $B := B(x_0)$ such that $\overline{B} \subset \{\lambda > 1\}$. But this implies that for any $\varphi \in C_c^\infty(B)$ for all suitably small
\[
u_\eps := u + \eps \varphi
\]
is a permissible variation of the Dirichlet energy, setting $\lambda_\eps := |u_\eps| > 0$ and $v_\eps := \frac{u_\eps}{|u_\eps|^2}$. That is,
\[
\frac{d}{d\eps} \Big |_{\eps = 0} \int |\nabla u_\eps|^2 = 0.
\]
But this implies
\[
\lap u = 0 \quad \mbox{in $\{\lambda > 1\}$},
\]
so in particular $\lambda = |u| \in C^\infty\left (\{\lambda > 1\}\right )$ and $v = \frac{u}{|u|} \in C^\infty\left (\{\lambda > 1\} \right )$.

The equation \eqref{eq:leq} follows now from the variation $\lambda_\eps := \lambda + \eps \psi$ for arbitrary $\psi \in C_c^\infty(\{\lambda > 1\})$.
\end{proof}

\begin{lemma}\label{la:visc:lambdag}
We have in viscosity sense
\[
\lap \lambda \geq 0 \quad \mbox{in $\B^2$}.
\]
\end{lemma}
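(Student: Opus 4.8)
The plan is to verify the viscosity subsolution property directly from the definition, by a case distinction according to the value of $\lambda$ at the touching point. Recall that ``$\lap \lambda \geq 0$ in the viscosity sense'' means: whenever $x_0 \in \B^2$ and $\phi \in C^2$ touches $\lambda$ from above at $x_0$, i.e. $\phi(x_0) = \lambda(x_0)$ and $\phi \geq \lambda$ in a neighbourhood of $x_0$, then $\lap \phi(x_0) \geq 0$. Since $\lambda \in W^{2,2}_{loc}(\B^2) \hookrightarrow C^0(\B^2)$ by Corollary~\ref{co:w22lw2pv}, the set $\{\lambda > 1\}$ is open and $\{\lambda = 1\}$ is closed; as $\lambda \geq 1$ everywhere, every $x_0 \in \B^2$ lies in exactly one of these two sets, and I treat them separately.

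\emph{Case $\lambda(x_0) > 1$.} Here Lemma~\ref{la:lvsmooth} applies: $\lambda$ is $C^\infty$ in a neighbourhood of $x_0$ and $\lap \lambda(x_0) = \lambda(x_0)\,|\nabla v(x_0)|^2 \geq 0$, since $\lambda \geq 1 > 0$. If $\phi \in C^2$ touches $\lambda$ from above at $x_0$, then $\phi - \lambda$ is a $C^2$ function with a local minimum at $x_0$ and value $0$ there, so $D^2(\phi-\lambda)(x_0)$ is positive semidefinite; taking traces yields $\lap \phi(x_0) \geq \lap \lambda(x_0) \geq 0$.

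\emph{Case $\lambda(x_0) = 1$.} Here $x_0$ is a point where $\lambda$ attains its global minimum value $1$ (because $\lambda \geq 1$ throughout $\B^2$). If $\phi \in C^2$ touches $\lambda$ from above at $x_0$, then $\phi(x_0) = 1$ and $\phi(x) \geq \lambda(x) \geq 1 = \phi(x_0)$ for all $x$ near $x_0$, i.e. $\phi$ itself has a local minimum at $x_0$. Consequently $D^2\phi(x_0) \geq 0$ as a symmetric matrix, and in particular $\lap \phi(x_0) \geq 0$.

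Combining the two cases proves the claim. There is no serious obstacle in this argument: the only place the constraint $\lambda \geq 1$ enters is the second case, and there it works in our favour, forcing any admissible test function to have a genuine local minimum at a coincidence point. The substantive input is the interior smoothness together with the pointwise identity $\lap\lambda = \lambda|\nabla v|^2$ on $\{\lambda>1\}$, both of which are already supplied by Lemma~\ref{la:lvsmooth}.
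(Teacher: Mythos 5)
Your proof is correct and follows essentially the same route as the paper: the same dichotomy between $\{\lambda>1\}$, where Lemma~\ref{la:lvsmooth} gives the pointwise equation $\lap\lambda=\lambda|\nabla v|^2\geq 0$ (you merely spell out the standard fact that smooth subsolutions are viscosity subsolutions via the Hessian comparison), and $\{\lambda=1\}$, where the obstacle forces any test function touching from above to have a local minimum at the contact point, hence nonnegative Laplacian there.
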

\begin{proof}
Let $x_0 \in \B^2$. If $\lambda(x_0) > 1$, then the claim follows immediately from Lemma~\ref{la:lvsmooth}, since smooth solutions are viscosity solutions.

So assume that $\lambda(x_0) = 1$. For any test-function $\varphi \geq \lambda$ such that $\varphi(x_0) = 1$ we have in particular
\[
\varphi \geq 1, \quad \mbox{and} \quad \varphi(x_0) = 1.
\]
That is, $\varphi$ attains its minimum at $x_0$ and thus $\lap \varphi \geq 0$.
\end{proof}

\begin{lemma}\label{la:visc:lambdal}
For $\lambda$ as above we have in viscosity sense.
\[
\lap \lambda \leq \Lambda \quad \mbox{in $\B^2$}.
\]
\end{lemma}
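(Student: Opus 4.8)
The plan is to verify the viscosity supersolution inequality $\lap \lambda \leq \Lambda$ directly from the definition: given $x_0 \in \B^2$ and a test function $\varphi \in C^2$ touching $\lambda$ from below at $x_0$ (i.e. $\varphi \leq \lambda$ near $x_0$, $\varphi(x_0) = \lambda(x_0)$), we must show $\lap \varphi(x_0) \leq \Lambda$. There are two cases. If $\lambda(x_0) > 1$, then by Lemma~\ref{la:lvsmooth} $\lambda$ is smooth near $x_0$ and solves $\lap \lambda = \lambda |\nabla v|^2 \leq \Lambda$ pointwise, so the touching condition forces $\lap \varphi(x_0) \leq \lap \lambda(x_0) \leq \Lambda$ and we are done.

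The substantive case is $\lambda(x_0) = 1$. Here the naive pointwise argument fails because $x_0$ lies on the free boundary where $\lambda$ need not be twice differentiable. The idea is to exploit the variational inequality \eqref{eq:muel} for $\mu = \lambda - 1$, now available in the strong form $\lap \mu \geq -g + \mu g$ a.e. (equivalently $\lap \lambda \geq (1-\lambda)|\nabla v|^2$) since by Corollary~\ref{co:w22lw2pv} we have $\lambda \in W^{2,2}_{loc}$ and $v \in W^{2,p}_{loc}$, hence $g = |\nabla v|^2 \in L^\infty_{loc}$ and is in fact continuous. Combined with the obstacle $\mu \geq 0$, one expects the standard obstacle-problem structure: on the coincidence set $\{\lambda = 1\}$ the distributional Laplacian of $\lambda$ is a nonnegative measure, while on $\{\lambda > 1\}$ it equals the bounded function $\lambda|\nabla v|^2$. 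The cleanest route is to show $\lap \lambda \leq \Lambda$ holds in the distributional sense as an $L^\infty$ bound — i.e. that the measure $\lap \lambda$ has density at most $\Lambda$ — and then invoke the standard equivalence (for a function that is already known to be in $W^{2,2}_{loc}$, or continuous with $\lap\lambda$ a signed measure) between the distributional upper bound and the viscosity upper bound.

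Concretely, I would argue as follows. Fix a cutoff and test the variational inequality \eqref{eq:muel} with competitors of the form $\varphi = \mu + t\psi$ for $\psi \in C_c^\infty$, $\psi \geq 0$, which gives $-\lap\mu + \mu g - g \geq 0$ as distributions, i.e.
\[
-\lap \lambda + (\lambda - 1)|\nabla v|^2 \geq 0 \quad \text{in } \mathcal{D}'(\B^2).
\]
Wait — this is the lower bound direction. For the upper bound, the point is the complementarity: on $\{\lambda > 1\}$ equality \eqref{eq:leq} holds, and the free-boundary condition (continuity of $\nabla\lambda$ across $\partial\{\lambda>1\}$, which follows since $\lambda \in W^{2,2}_{loc} \subset C^{1,\alpha}$ would be circular — instead use only $\lambda \in W^{2,2}_{loc}$, so $\lap\lambda \in L^2_{loc}$ is an honest function) forces $\lap\lambda = 0$ a.e. on $\{\lambda = 1\}$ by the classical fact that $\nabla\lambda = 0$ a.e. on a level set and hence $\lap\lambda = 0$ a.e. there. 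Therefore a.e.
\[
\lap\lambda = \chi_{\{\lambda > 1\}}\, \lambda |\nabla v|^2 \leq \Lambda,
\]
with $\Lambda = \sup_{\text{compact}} \lambda|\nabla v|^2$ finite by Corollary~\ref{co:w22lw2pv}. Since $\lambda \in W^{2,2}_{loc}$ and $\lap\lambda \leq \Lambda$ pointwise a.e., a mollification argument (or the standard lemma that a $W^{2,2}$ function with a.e. subsolution bound is a viscosity subsolution) yields the claim.

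The main obstacle I anticipate is the justification that $\lap\lambda = 0$ a.e. on the coincidence set — one must be careful that this uses only regularity already in hand ($\lambda \in W^{2,2}_{loc}$, so by the Sobolev–Calderón lemma $\nabla\lambda = 0$ and $\nabla^2\lambda = 0$ a.e. on $\{\lambda = \text{const}\}$) and does not secretly assume the $C^{1,\alpha}$ conclusion we are working toward. A secondary technical point is passing from the a.e. differential inequality to the viscosity inequality, which is routine given $W^{2,2}_{loc}$ regularity but should be stated with care (e.g. via sup-convolutions or by noting that a touching test function, after an arbitrarily small quadratic perturbation, must be touched at a point of approximate twice-differentiability of $\lambda$).
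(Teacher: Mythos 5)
Your argument is correct, but it reaches the key one-sided bound by a different route than the paper. The paper never decomposes $\B^2$ into coincidence and non-coincidence sets: it simply notes that upward perturbations $\lambda+\eps\varphi$, $\varphi\geq 0$, are always admissible, so the first variation gives $\int\nabla\lambda\cdot\nabla\varphi+\int\lambda\varphi|\nabla v|^2\geq 0$ for all $\varphi\geq 0$, i.e.\ $\lap\lambda\leq\lambda|\nabla v|^2\leq\Lambda$ distributionally; it then mollifies ($\lap(\eta_\eps\ast\lambda)\leq\Lambda$ pointwise) and passes to the limit using local uniform convergence and stability of viscosity solutions. You instead invoke the full strength of $\lambda\in W^{2,2}_{loc}$ (Proposition~\ref{pr:vw22} via Corollary~\ref{co:w22lw2pv}), the smoothness and equation \eqref{eq:leq} on $\{\lambda>1\}$ from Lemma~\ref{la:lvsmooth}, and the standard fact that $\nabla\lambda$ and then $\nabla^2\lambda$ vanish a.e.\ on the level set $\{\lambda=1\}$, to obtain the a.e.\ identity $\lap\lambda=\chi_{\{\lambda>1\}}\lambda|\nabla v|^2$, after which the same mollification-plus-stability step closes the proof. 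Your route needs more machinery but delivers more: the a.e.\ identity simultaneously re-proves Lemma~\ref{la:visc:lambdag} and is the kind of statement one wants later for the free boundary; the paper's route is leaner, requiring only the variational inequality, continuity of $\lambda$, and the $L^\infty_{loc}$ bound on $\lambda|\nabla v|^2$. Your care in not using $C^{1,\alpha}$-regularity of $\lambda$ (which is the goal) is appropriate; note only that the aside ``$W^{2,2}_{loc}\subset C^{1,\alpha}$'' is false in two dimensions anyway, but you do not rely on it.

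One remark on the computation you abandoned mid-proof: the distributional inequality you derived from \eqref{eq:muel} with competitors $\mu+t\psi$, $\psi\geq 0$, is \emph{not} ``the lower bound direction'' --- an inequality of the form $-\lap\lambda+(\text{locally bounded})\geq 0$ is exactly the upper bound $\lap\lambda\leq\Lambda$ you are after, and (up to a sign slip in the zeroth-order term, which should read $\lambda|\nabla v|^2$ when one varies $E(\lambda,v)$ directly) it is precisely the paper's proof. Had you kept it, the coincidence-set analysis would have been unnecessary. The lower bound $\lap\lambda\geq 0$ is the content of Lemma~\ref{la:visc:lambdag} and comes from the obstacle itself, not from the variational inequality.
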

\begin{proof}
By the variation $\lambda_\eps := \lambda + \eps \varphi$ for $\varphi \in C_c^\infty(\B^2)$ and $\varphi \geq 0$ we get the variational inequality
\[
\int \nabla \lambda \cdot \nabla \varphi + \lambda \varphi |\nabla v|^2 \geq 0.
\]
Let $\eta \in C_c^\infty(B(0,1))$ be the usual bump function, $\eta \equiv 1$ in $B(0,\frac{1}{2})$, $\eta(-x) = \eta(x)$, $\eta \geq 0$ and $\int \eta = 1$. We set $\eta_\eps := \eps^{-2} \eta(\cdot/\eps)$. We denote $\lambda_\eps := \eta_\eps \ast \lambda$ and have for any fixed testfunction $\varphi \geq 0$ (if $\eps$ is small enough then $\varphi \ast \eta_\eps \geq 0$ is permissible as a test function)
\[
\begin{split}
\int \nabla \lambda_\eps \cdot \nabla \varphi &=  \int \nabla \lambda \cdot \nabla (\varphi \ast \eta_\eps)\\
\geq& - \int \lambda (\varphi\ast \eta_\eps) |\nabla v|^2 \\
=& - \int \varphi \brac{\lambda |\nabla v|^2}\ast \eta_\eps  \\
\end{split}
\]
Since, in view of Corollary~\ref{co:w22lw2pv} we have
\[
\|\brac{\lambda |\nabla v|^2}\ast \eta_\eps \|_{L^\infty} \leq 
\|\lambda |\nabla v|^2\|_{L^\infty} =: \Lambda < \infty
\]
we find that 
\begin{equation}\label{eq:lepde}
\lap \lambda_\eps \leq \Lambda \quad \mbox{in $\B_{1-2\eps}$}
\end{equation}
This inequality holds in pointwise and viscosity sense, since $\lambda_\eps$ is smooth.

On the other hand, since $\lambda$ is H\"older continuous, we have that $\lambda_\eps$ converges locally uniformly to $\lambda$ as $\eps \to 0$. This implies, e.g. as in \cite[Lemma 2.4]{S18}, that also $\lambda$ satisfies \eqref{eq:lepde} in viscosity sense.

\end{proof}

As a consequence of the regularity theory of viscosity solutions to elliptic partial differential inequalities, see e.g. \cite{S18}, we obtain
\begin{corollary}
Let be $\lambda$ as above, then $\lambda \in C^{1,\alpha}$ for any $\alpha < 1$.
\end{corollary}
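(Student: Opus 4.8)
The plan is to obtain $C^{1,\alpha}$-regularity of $\lambda$ as a direct consequence of Proposition~\ref{pr:lvs}, which tells us that $\lambda$ solves $0 \leq \lap \lambda \leq \Lambda$ in the viscosity sense, together with the classical interior regularity theory for viscosity solutions of uniformly elliptic inequalities (the Laplacian being the simplest uniformly elliptic operator). First I would recall that any function $w$ satisfying $\lap w \leq \Lambda$ in the viscosity sense in $\B^2$ is, after subtracting the explicit paraboloid $\frac{\Lambda}{2n}|x|^2$, a viscosity supersolution of $\lap(\cdot) \leq 0$, i.e. superharmonic in the viscosity (equivalently distributional) sense; likewise $\lap \lambda \geq 0$ in the viscosity sense makes $\lambda$ subharmonic. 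In particular $\lambda$ is simultaneously sub- and superharmonic up to a smooth (indeed polynomial) correction, so one already gets from potential theory that $\lambda$ is continuous — which we know — and that $\lap \lambda$, understood as a distribution, is a locally bounded function with $0 \leq \lap \lambda \leq \Lambda$ a.e.

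The cleaner route, and the one I would actually write, is to invoke the interior $C^{1,\alpha}$-estimate for viscosity solutions of $f_1 \leq \lap w \leq f_2$ with bounded right-hand sides; this is standard, see e.g. \cite{CC95} or \cite{S18}. Concretely: by Proposition~\ref{pr:lvs} the function $\lambda$ satisfies $\lap \lambda \in [0,\Lambda]$ in viscosity sense on $\B^2$, hence on every ball $B(x_0, 2r) \subset \B^2$ one has the a priori bound
\[
[\lambda]_{C^{1,\alpha}(B(x_0,r))} \leq C(r)\,\brac{\|\lambda\|_{L^\infty(B(x_0,2r))} + \Lambda}
\]
for every $\alpha < 1$, with $C(r)$ depending only on $r$ and the dimension $n=2$. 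The right-hand side is finite because $\lambda \in L^\infty_{loc}(\B^2)$ by Proposition~\ref{pr:lambdabd} (indeed $\lambda \in W^{2,2}_{loc}$ by Corollary~\ref{co:w22lw2pv}) and $\Lambda < \infty$ by Lemma~\ref{la:lvsmooth}. Covering any compact $K \subset \B^2$ by finitely many such balls then yields $\lambda \in C^{1,\alpha}_{loc}(\B^2)$ for every $\alpha < 1$, which is the claim.

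I would remark on why one gets \emph{every} $\alpha < 1$ rather than just \emph{some} $\alpha > 0$: the two-sided bound $0 \leq \lap \lambda \leq \Lambda$ means $\lambda$ has a bounded Laplacian, and a function with $L^\infty$ Laplacian in dimension two lies in $W^{2,p}_{loc}$ for all $p < \infty$ by Calderón--Zygmund theory, hence in $C^{1,\alpha}_{loc}$ for all $\alpha < 1$ by Sobolev embedding — no fine free-boundary analysis (e.g. $C^{1,1}$ at the contact set) is needed for the stated result. Thus, strictly speaking, once Proposition~\ref{pr:lvs} is in hand one may even bypass viscosity theory and argue: $\lap \lambda$ is a nonnegative distribution, hence a Radon measure; the upper viscosity bound forces this measure to be $\leq \Lambda\, dx$, so $\lap \lambda \in L^\infty_{loc}$, and $C^{1,\alpha}$ follows from elliptic regularity. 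The main (and only) obstacle is purely bookkeeping: one must make sure the passage ``viscosity $\lap \lambda \geq 0$'' $\Rightarrow$ ``distributional $\lap \lambda \geq 0$'' is justified — this is classical (a viscosity subsolution of $\lap(\cdot)=0$ is subharmonic, cf. \cite{CC95}) — and that the upper bound is genuinely inherited, which again is standard; everything downstream is routine elliptic regularity. Finally, combining $\lambda \in C^{1,\alpha}_{loc}$ with $v \in C^{1,\alpha}_{loc}$ from Corollary~\ref{co:w22lw2pv} gives $u = \lambda v \in C^{1,\alpha}_{loc}$, completing the proof of Theorem~\ref{th:main2} and hence Theorem~\ref{th:main}.
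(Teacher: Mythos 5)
Your proposal is correct and follows essentially the same route as the paper, which deduces the corollary directly from Proposition~\ref{pr:lvs} via the standard interior regularity theory for viscosity solutions of two-sided elliptic inequalities with bounded right-hand side (citing \cite{S18}); your additional remark that one may pass to the distributional formulation and conclude $\lap \lambda \in L^\infty_{loc}$, hence $W^{2,p}_{loc}$ for all $p<\infty$ and $C^{1,\alpha}_{loc}$ for all $\alpha<1$, is a correct fleshing-out of why the full range $\alpha<1$ is obtained.
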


\section{Adaptations for the proof of Theorem~\ref{th:mainSO}}\label{s:CO}
For matrices $A,B \in \R^{N \times N}$ we denote the Hilbert-Schmidt scalar product by
\[
A:B := \sum_{i,j = 1}^\infty A_{ij} B_{ij}.
\]
Now as in the sphere case, where we have $u \cdot \nabla u = 0$ if $|u| =1$ almost everywhere, if $P \in SO(N)$ almost everywhere then
\[
\nabla P : P = P^T \nabla P: I_{N \times N} = 0,
\]
since $P^T \nabla P$ is antisymmetric and the identity matrix $I_{N \times N}$ is symmetric.

In particular we have for $\lambda \in H^1(\B^2)$ and $P \in H^1(\B^2,SO(N))$,
\[
|\nabla (\lambda P)|^2 = |\nabla \lambda|^2 + \lambda^2 |\nabla P|^2.
\]
We conclude that we have to consider critical points of the energy
\[
E(\lambda,P) = |\nabla \lambda|^2 + \lambda^2 |\nabla P|^2.
\]
So we see that we get the analogue of Lemma~\ref{la:vel}. Now regularity estimates are almost verbatim of what we have here.
\begin{lemma}[Euler-Lagrange equations]\label{la:Pel}
Let $\lambda$ and $P$ be as in Theorem~\ref{th:mainSO}. Then,
\[
 \div (\lambda^2 \nabla P) = \lambda^2 \Omega \nabla P. 
\]
with
\[
 \Omega = -P^T \nabla P.
\]
Equivalently we also have the conservation law
\[
\div(\lambda^2 P^T \nabla P) = 0.
\]
\end{lemma}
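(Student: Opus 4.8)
The plan is to mirror the proof of Lemma~\ref{la:vel} essentially line by line, with the sphere constraint $|v|\equiv 1$ replaced by the group constraint $P\in SO(N)$ (here, as at the start of Section~\ref{s:CO}, $P\in H^1(\B^2,SO(N))$ denotes the orthogonal component and $\lambda$ the scalar component of the critical map, so $(\lambda,P)$ is critical for $E(\lambda,P)=\int|\nabla\lambda|^2+\lambda^2|\nabla P|^2$ subject to $\lambda\ge\lambda_0$). First I would fix the right class of inner variations: for antisymmetric‑matrix‑valued $X\in C^\infty_c(\B^2,\mathfrak{so}(N))$ put $P_\eps:=P\exp(\eps X)$. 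Since $\exp(\eps X)(x)\in SO(N)$ pointwise, $P_\eps$ is $SO(N)$‑valued, $\lambda$ is untouched, and $\lambda P_\eps\in H^1$ because $\exp(\eps X)$ is smooth with bounded derivatives, so $P_\eps$ is a permissible competitor. (Equivalently one can take $P_\eps:=\pi_{SO(N)}(P+\eps\Psi)$ for $\Psi\in C^\infty_c(\B^2,\R^{N\times N})$, whose derivative at $\eps=0$ is the twisted antisymmetrisation $\tfrac12 P(P^T\Psi-\Psi^TP)$.) Differentiating, using $\tfrac{d}{d\eps}\big|_{\eps=0}\nabla P_\eps=(\nabla P)X+P\nabla X$, criticality reads
\[
\int_{\B^2}\lambda^2\,\nabla P:\big((\nabla P)X+P\nabla X\big)=0\qquad\text{for all }X\in C^\infty_c(\B^2,\mathfrak{so}(N)).
\]

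The algebraic identity replacing $\langle v,\nabla v\rangle=0$ is the one already recorded before Lemma~\ref{la:Pel}: differentiating $P^TP=I$ shows $P^T\nabla P$ is antisymmetric, hence the tensor $\sum_i\partial_\alpha P_{i\beta}\partial_\alpha P_{i\gamma}$ is symmetric in $(\beta,\gamma)$; contracted against the antisymmetric $X$ this gives $\nabla P:((\nabla P)X)\equiv 0$ pointwise, so the variational identity collapses to $\int_{\B^2}\lambda^2\,\nabla P:(P\nabla X)=0$. Rewriting $\nabla P:(P\nabla X)=\sum_\alpha\partial_\alpha X:(P^T\partial_\alpha P)$ and integrating by parts yields $\int_{\B^2}X:\div(\lambda^2P^T\nabla P)=0$ for every antisymmetric test field $X$; since $\div(\lambda^2P^T\nabla P)$ is itself antisymmetric (a sum of derivatives of the antisymmetric matrices $\lambda^2P^T\partial_\alpha P$), this forces the conservation law $\div(\lambda^2P^T\nabla P)=0$, the exact analogue of \eqref{eq:shatah}.

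To obtain the first‑order equation I would then use $\nabla P=-P\,(P^T\nabla P)$ (from $PP^T=I$) to write $\lambda^2\nabla P=-P\,(\lambda^2\Omega)$ with $\Omega:=-P^T\nabla P$, take the weighted divergence, and observe that the term $P\,\div(\lambda^2\Omega)=-P\,\div(\lambda^2P^T\nabla P)$ vanishes by the conservation law just established; what remains is $\div(\lambda^2\nabla P)=\lambda^2\,\Omega\,\nabla P$, with $\Omega$ playing the role of the antisymmetric potential in \eqref{eq:vel} (the exact placement of the Hilbert–Schmidt contraction, equivalently writing $\nabla P\,P^T$ in place of $-P^T\nabla P$, is immaterial for the regularity theory that follows). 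Alternatively one can extract the Lagrange multiplier directly: differentiating the constraints $(P^TP)_{\beta\gamma}=\delta_{\beta\gamma}$ shows the raw Euler–Lagrange system is $\div(\lambda^2\nabla P)=\lambda^2 P\Lambda$ for a symmetric matrix field $\Lambda$, and pairing with $P^T$ together with $\div(\lambda^2P^T\nabla P)=0$ identifies $\Lambda=-(\nabla P)^T\nabla P$ — the matrix version of the $\lambda^2 v^i|\nabla v|^2$ term arising in Lemma~\ref{la:vel}.

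I do not expect a genuine obstacle here: the statement is a verification, not a theorem with a crux. The one point that needs care, and the only real difference from the sphere warm‑up, is that competitors must stay on the curved manifold $SO(N)$, so affine variations $P+\eps\Psi$ are illegitimate and one must differentiate through the exponential (or nearest‑point) retraction; once that is set up, every cancellation above is forced by $P^TP=I$ in exactly the way $|v|\equiv 1$ forces them for $v$. It is also worth recording at this stage, for the subsequent sections, that $|\Omega|=|P^T\nabla P|\aleq|\nabla P|$ and — since $\lambda\ge\lambda_0>0$ — $\lambda^2$ is a weight bounded below and above, so the equation has precisely the structure of \eqref{eq:mrpde}–\eqref{eq:omegape} together with \eqref{eq:shatah2}, and Proposition~\ref{pr:vpg2uniformmorrey2} and the sections following it apply to $P$ verbatim.
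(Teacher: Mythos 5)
Your proposal is correct and follows essentially the same route as the paper: vary through $P_\eps=P\exp(\eps X)$ with antisymmetric compactly supported $X$ (the paper's $X=\alpha\varphi$, $\alpha\in so(N)$ constant, is equivalent), kill the term $\nabla P:((\nabla P)X)$ by symmetry against antisymmetry, deduce the conservation law $\div(\lambda^2P^T\nabla P)=0$, and then recover the pointwise equation from $PP^T=I$ together with that law. Only a cosmetic slip: $\nabla P=+P\,(P^T\nabla P)$ (not $-P\,(P^T\nabla P)$), but your subsequent identity $\lambda^2\nabla P=-P(\lambda^2\Omega)$ with $\Omega=-P^T\nabla P$ is consistent and, as you note, the precise sign/placement of $\Omega$ is immaterial for the regularity theory.
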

\begin{proof}
A permissible variation for $P$ is $P_\eps := P e^{\eps \alpha \varphi}$ where $\alpha \in so(N)$ is antisymmetric and $\varphi \in C_c^\infty(\B^2)$. This leads to
\[
\int \lambda^2 \nabla P: \nabla (P\alpha \varphi) = 0.
\]
Observe that for antisymmetric $\alpha$ we readily have
\[
\nabla P: \nabla P \alpha = 0
\]
Thus, the Euler-Lagrange equations for variations in $P$ are 
\[
\div(\lambda^2 P^T \nabla P):\alpha = 0.
\]
This holds for any antisymmetric matrix $\alpha \in so(N)$. Using that $P^T \nabla P$ is also antisymmetric, we thus get
\[
\div(\lambda^2 P^T \nabla P) = 0.
\]
We can equivalently rewrite this as
\[
\div(\lambda^2 \nabla P) = \div(\lambda^2 P P^T \nabla P) =\lambda^2 \nabla P P^T \nabla P. 
\]
Using that $\nabla P P^T = \nabla (P P^T)-P^T \nabla P=-P^T \nabla P$ we get the claim.
\end{proof}

\subsection*{Acknowledgment}
A.S. was supported by the German Research Foundation (DFG) through grant no.~SCHI-1257-3-1, by the Daimler and Benz foundation through grant no. 32-11/16, as well as the Simons foundation through grant no 579261. Part of this work was carried out while A.S. was visiting Chulalongkorn University whose hospitality is gratefully acknowledged.

\bibliographystyle{abbrv}%
\bibliography{bib}%

\end{document}